\newcommand*\circled[1]{\tikz[baseline=(char.base)]{
            \node[shape=circle,draw,inner sep=2pt] (char) {#1};}}
\newcommand{\inner}[2]{{\left\langle #1, #2 \right\rangle}}            
\newcommand{\norm}[1]{\left\|#1\right\|}
\newcommand{\X}{\mathcal X}
\newcommand{\Y}{\mathcal Y}
\newcommand{\R}{\mathbb R}
\newcommand{\N}{\mathbb N}
\newcommand{\E}{\mathbb E}
\newcommand{\h}{\mathscr H}
\newcommand{\hh}{\boldsymbol{H}}
\newcommand{\Id}{\boldsymbol{I}}
\newcommand{\one}{\boldsymbol{1}}
\newcommand{\B}{\mathcal B}
\newcommand{\M}{\mathcal M}
\newcommand{\id}{\mathfrak I}
\newcommand{\T}{\mathcal T}
\newcommand{\PP}{\mathcal P}
\newcommand{\Ntl}{\mathcal{N}_{2}(\lambda)}
\newcommand{\Ntlh}{\widehat{\mathcal{N}}_{2}(\lambda)}
\newcommand{\Ntlsq}{\mathcal{N}^2_{2}(\lambda)}
\newcommand{\Ntlsqh}{\widehat{\mathcal{N}}^2_{2}(\lambda)}
\newcommand{\Nol}{\mathcal{N}_{1}(\lambda)}
\newcommand{\Nolsq}{\mathcal{N}^2_{1}(\lambda)}
\newcommand{\Cs}{(C_1+C_2)}
\newcommand{\Cl}{C_{\lambda}}
\newcommand{\II}{\mathds{1}}
\newcommand{\K}{\kappa}
\newcommand{\kk}{K}
\newcommand{\op}{\EuScript{L}^\infty(\h)}
\newcommand{\opl}{\EuScript{L}^\infty(\Lp)}
\newcommand{\hs}{\EuScript{L}^2(\h)}
\newcommand{\Lp}{L^{2}(P_0)}
\newcommand{\range}{\text{Ran}} 
\newcommand{\SgL}{\Sigma_{0,\lambda}^{-1/2}}
\newcommand{\SgLh}{\hat{\Sigma}_{0,\lambda}^{-1/2}}
\newcommand{\SL}{\Sigma_{0,\lambda}}
\newcommand{\SLh}{\hat{\Sigma}_{0,\lambda}}
\newcommand{\gShh}{g^{1/2}_{\lambda}(\hat{\Sigma}_0)}
\newcommand{\gT}{g_{\lambda}(\T)}
\newcommand{\gl}{g_{\lambda}}
\newcommand{\U}{u}
\newcommand{\gSL}{g^{1/2}_{\lambda}(\Sigma_0)}
\newcommand{\gSLh}{g^{1/2}_{\lambda}(\hat{\Sigma}_0)}
\newcommand{\stat}{\hat{\eta}^{TS}_{\lambda}}
\newcommand{\htens}{\otimes_{\h}}
\newcommand{\ltens}{\otimes_{\Lp}}
\newcommand{\qq}{q_{1-\alpha}^{\lambda}}
\newcommand{\PQ}{P_0}
\newcommand{\cd} {|\Lambda|}
\renewcommand{\epsilon}{\varepsilon}
\newtheorem{thm}{Theorem}
\newtheorem{rem}{Remark}
\newtheorem{cor}[thm]{Corollary}
\theoremstyle{example} 
\newtheorem{example}{Example}
\begin{document}

\title{Spectral Regularized Kernel Goodness-of-Fit Tests}
\author{Omar Hagrass}
\author{Bharath K. Sriperumbudur}
\author{Bing Li}
\affil{Department of Statistics,  
Pennsylvania State University\\
University Park, PA 16802, USA.\\
\texttt{\{oih3,bks18,bxl9\}}@psu.edu}
\date{}
\maketitle

\begin{abstract}
Maximum mean discrepancy (MMD) has enjoyed a lot of success in many machine learning and statistical applications, including non-parametric hypothesis testing, because of its ability to handle non-Euclidean data. Recently, it has been demonstrated in \cite{Krishna} that the goodness-of-fit test based on MMD is not minimax optimal while a Tikhonov regularized version of it is, for an appropriate choice of the regularization parameter. However, the results in \cite{Krishna} are obtained under the restrictive assumptions of the \emph{mean element} being zero, and the \emph{uniform boundedness condition} on the eigenfunctions of the integral operator. Moreover, the test proposed in \cite{Krishna} is not practical as it is not computable for many kernels. In this paper, we address these shortcomings and extend the results to general spectral regularizers that include Tikhonov regularization.
\end{abstract}
\textbf{MSC 2010 subject classification:} Primary: 62G10; Secondary: 65J20, 65J22, 46E22, 47A52.\\
\textbf{Keywords and phrases:} Goodness-of-fit test, maximum mean discrepancy, reproducing kernel Hilbert space, covariance operator, U-statistics, Bernstein's inequality, minimax separation, adaptivity, permutation test, spectral regularization
\setlength{\parskip}{4pt}

\section{Introduction}\label{sec:intro}
Given $\mathbb{X}_n:=(X_i)_{i=1}^n \stackrel{i.i.d}{\sim} P$, where $P$ is defined on a measurable space $\X$, a goodness-of-fit test involves testing $H_0 : P=P_0$ against $H_1: P \neq P_0$, where $P_0$ is a fixed known distribution. This is a classical and well-studied problem in statistics for which many tests have been proposed, including the popular ones such as the $\chi^2$-test and Kolmogorov-Smirnoff test \citep{lehmann}. However, many of these tests either rely on strong distributional assumptions or cannot handle non-Euclidean data that naturally arise in many modern applications.

A non-parametric testing framework that has gained a lot of popularity over the last decade is based on the notion of reproducing kernel Hilbert space (RKHS) \citep{Aronszajn} embedding of probability distributions (\citealt{Smola}, \citealt{classifiability}, \citealt{rkhs}). The power of this framework lies in its ability to handle data that is not necessarily Euclidean. 
This framework involves embedding a probability measure $P$ into an RKHS, $\mathscr{H}$ through the corresponding mean element, i.e., 
$$\mu_P=\int_{\X}K(\cdot,x)\,dP(x)\in\mathscr{H},$$
where $K:\X\times\X\rightarrow\mathbb{R}$ is the unique reproducing kernel (r.k.)~associated with $\mathscr{H}$, with $P$ satisfying  $\int_\X \sqrt{K(x,x)}\,dP(x)<\infty$. Using this embedding, a pseudo-metric can be defined on the 
space of probability measures, called the \emph{maximum mean discrepancy} ($\mathrm{MMD}$) (\citealt{gretton12a}, \citealt{NipsGretton}), as \begin{equation}D_{\mathrm{MMD}}(P,Q) = \norm{\mu_P-\mu_Q}_{\h},\nonumber
\end{equation} which has the following variational representation (\citealt{gretton12a}, \citealt{JMLR_metrics}), 
\begin{equation}
D_{\mathrm{MMD}}(P,Q) := \sup_{f \in \h : \norm{f}_{\h}\leq 1} \int_{\X} f(x)\,d(P-Q)(x).\nonumber
\end{equation}
We refer the interested reader to (\citealt{JMLR_metrics}, \citealt{bernouli2016}, \citealt{Carl}) for more details about $D_{\mathrm{MMD}}$.
Thus given some fixed $P_0$, a consistent goodness-of-fit test can be conducted by using the following estimator of $D^2_{\mathrm{MMD}}$ as a test statistic, i.e., 
\begin{align*}
\hat{D}^2_{\mathrm{MMD}}(P,P_0) &:=\frac{1}{n(n-1)}\sum_{i\neq j}\inner{\kk(\cdot,X_i)-\mu_0}{\kk(\cdot,X_j)-\mu_0}_{\h}\\
&=\frac{1}{n(n-1)}\sum_{i\ne j} K(X_i,X_j)-\frac{2}{n}\sum^n_{i=1}\mu_0(X_i)+\Vert \mu_0\Vert^2_\h,
\end{align*}
and using the $1-\alpha$ quantile of the asymptotic null distribution of $\hat{D}^2_{\mathrm{MMD}}(P,P_0)$ as the critical level (\citealp[Theorem 1]{Krishna}), while assuming $\mu_0:=\mu_{P_0}$ and $\Vert \mu_0\Vert^2_{\h}$ are computable in closed form. The asymptotic null distribution of $\hat{D}^2_{\mathrm{MMD}}(P,P_0)$ does not have a simple closed form---the distribution is that of an infinite sum of weighted chi-squared random variables with the weights being the eigenvalues of an integral operator associated with the kernel $K$ w.r.t.~the distribution $P_0$ \citep{Serfling-90}. Assuming $\mu_{0}=0$, recently, \citep{Krishna} showed this test based on $\hat{D}_{\text{MMD}}$ to be not optimal in the minimax sense and  modified it to achieve a minimax optimal test. \citet{MingYuan} constructed an optimal test by using the Gaussian kernel on $\X=\R^d$ (the test and analysis can be extended to translation invariant kernels on $\R^d$ using the ideas in \citealp{MMDagg}) by allowing the bandwidth of the kernel to shrink to zero as $n\rightarrow \infty$---this is in contrast to the $\hat{D}_{\text{MMD}}$ test where the bandwidth or the kernel parameter is fixed and does not depend on $n$. By relaxing the requirement of $\X=\R^d$, 
\cite{Krishna} studied the question of optimality for general domains by proposing a regularized test statistic, 
\begin{equation}
   D^2_{\lambda}(P,P_0) = \sum_{j \geq 1} \frac{\lambda_j}{\lambda_j+\lambda} \left(\E_{P}\phi_j\right)^2, \label{eq:regularized_test}
\end{equation} assuming $\E_{P_0}\phi_j=0$ for all $j$, where $(\lambda_j)_{j\geq1}$ and $(\phi_j)_{j\geq1}$ are the eigenvalues and eigenfunctions of an integral operator associated with the kernel $K$ w.r.t.~the distribution $P_0$, and $\lambda>0$ is the regularization parameter. Under some regularity conditions, they showed the asymptotic null distribution of an appropriately normalized version of \eqref{eq:regularized_test} to be the standard normal distribution, using which a minimax optimal goodness-of-fit test was constructed (\citealt[Theorems 2--4]{Krishna}). However, this test is impractical and limited for two reasons: (i) The test requires knowledge of the eigenvalues and eigenfunctions which are only known for a few $(K, P_0)$ pairs, and (ii) $\E_{P_0}\phi_j=0$ for all $j$ implies that $\mu_0=0$, a condition that is not satisfied by any characteristic translation invariant kernels on $\R^d$, including the Gaussian kernel \citep{JMLR_metrics, JMLR_universal}. To address these issues, in this paper, we follow an operator theoretic approach and construct a generalized version of \eqref{eq:regularized_test} based on the idea of spectral regularization that includes \eqref{eq:regularized_test} as a special case while relaxing the requirement of $\E_{P_0}\phi_j=0$ for all $j$---hence resolving (ii)---, and establish its minimax optimality. Moreover, under an additional assumption of $P_0$ being samplable, i.e., extra samples are available from $P_0$, we propose a practical test (i.e., computable) that is also minimax optimal, thereby resolving the issue mentioned in (i).

Before introducing our contributions, we will first introduce the minimax framework pioneered by \cite{Burnashev} and \cite{Ingester1, Ingester2}  to study the optimality of tests, which is essential to understand our contributions and their connection to the results of \citep{Krishna,MingYuan}. Let $\phi(\mathbb{X}_n)$ be any test that rejects $H_0$ when $\phi=1$ and fails to reject $H_0$ when $\phi=0$. Denote the class of all such asymptotic (\emph{resp.} exact) $\alpha$-level tests to be $\Phi_\alpha$ (\emph{resp.} $\Phi_{n,\alpha}$). The Type-II error of a test $\phi\in \Phi_\alpha$ (\emph{resp.} $\in\Phi_{n,\alpha}$) w.r.t.~$\mathcal{P}_\Delta$ is defined as
$$R_\Delta(\phi)=\sup_{P\in\mathcal{P}_\Delta}\mathbb{E}_{P^n}[1-\phi],$$ 
where \begin{equation}\mathcal{P}_\Delta:=\left\{P \in \mathcal{C}:\rho^2(P,P_0)\ge \Delta\right\},\nonumber
\end{equation} is the class of $\Delta$-separated alternatives in the probability metric (or divergence) $\rho$, with $\Delta$ being referred to as the \emph{separation boundary} or \emph{contiguity radius}. Of course, the interest is in letting $\Delta\rightarrow 0$ as $n \rightarrow \infty$ (i.e., shrinking alternatives) and analyzing $R_\Delta$ for a given test, $\phi$, i.e., whether $R_\Delta(\phi)\rightarrow 0$. In the asymptotic setting, the \emph{minimax separation} or \emph{critical radius} $\Delta^*$ is the fastest possible order at which $\Delta\rightarrow 0$ such that $\lim\inf_{n\rightarrow\infty}\inf_{\phi\in\Phi_\alpha}R_{\Delta^*}(\phi)\rightarrow 0$, i.e., for any $\Delta$ such that $\Delta/\Delta^*\rightarrow\infty$, there is no test $\phi\in\Phi_\alpha$ that is consistent over $\mathcal{P}_\Delta$. A test is \emph{asymptotically minimax optimal} if it is consistent over $\mathcal{P}_\Delta$ with $\Delta \asymp\Delta^*$. On the other hand, in the non-asymptotic setting, the minimax separation $\Delta^*$ is defined as the minimum possible separation, $\Delta$ such that $\inf_{\phi\in\Phi_{n,\alpha}}R_{\Delta}(\phi)\le\delta$, for $0<\delta<1-\alpha$. A test $\phi\in\Phi_{n,\alpha}$ is called \emph{minimax optimal} if $R_\Delta(\phi)\le \delta$ for some $\Delta\asymp\Delta^*$. In other words, there is no other $\alpha$-level test that can achieve the same power with a better separation boundary. 

\cite{Krishna} consider $\mathcal{P}_\Delta$ as
\begin{equation}
\mathcal{P}_\Delta=\left\{P:\frac{dP}{dP_0}-1\in \mathcal{F}(\nu;M), \ \chi^2(P,P_0)=\norm{\frac{dP}{dP_0}-1}^2_{L^2(P_0)} \geq \Delta \right\},
\label{Eq:alternate-krishna}
\end{equation}
where $\nu>0$, and \begin{align*}\mathcal{F}(\nu;M)&:=\left\{f\in L^2(P_0):\,\text{for any } R>0,\,\exists f_R\in\h\,\text{such that }\Vert f_R\Vert_\h\le R,\,\right.\nonumber\\
&\left.\qquad\qquad\qquad\text{and}\,\Vert f-f_R\Vert_{L^2(P_0)}\le MR^{-1/\nu}\right\}.\end{align*}
$\mathcal{P}_\Delta$ in \eqref{Eq:alternate-krishna} denotes the class of alternatives that are $\Delta$-separated from $P_0$ in the $\chi^2$-divergence---alternately, the squared $L^2(P_0)$ norm of the likelihood ratio, $dP/dP_0-1$ is lower bounded by $\Delta$---, while satisfying a smoothness condition. The smoothness condition is imposed on the likelihood ratio and is defined through the rate of approximation of a function in $L^2(P_0)$ by an element in an RKHS ball of radius $R$. The faster the approximation rate---controlled by $\nu$---, the smoother the function being approximated. 
$\mathcal{F}(\nu;M)$ is a subspace of a real interpolation space obtained by interpolating $\h$ and $L^2(P_0)$. Particularly, $\nu=0$ corresponds to an RKHS ball of radius $R$. Note that \eqref{Eq:alternate-krishna} requires $P \ll P_0$ (i.e., $P$ is absolutely continuous w.r.t.~$P_0$) so that the Radon-Nikodym derivative $dP/dP_0$ is well defined. Define
\begin{equation}
\tilde{\PP}_{\Delta}:= \left\{P: \frac{dP}{dP_0}-1 \in \range (L_K^{\frac{1}{2\nu+2}}), 
 \ \chi^2(P,P_0)=\norm{\frac{dP}{dP_0}-1}^2_{L^2(P_0)} \geq \Delta\right\}, \label{Eq:alternative-theta1}
\end{equation}
where $L_K:L^2(P_0)\rightarrow L^2(P_0),\,f\mapsto \int_\mathcal{X} K(\cdot,x)f(x)\,dP_0(x)$ is an integral operator defined by $K$, and $\text{Ran}(A)$ denotes the range space of $A$. It follows from \cite[Theorem 4.1]{Cucker} that $$\range (L_K^{\frac{1}{2\nu+2}})\subset \mathcal{F}\left(\nu;2^{\frac{2\nu+2}{\nu}}\Vert L_K^{-\frac{1}{2\nu+2}}\left(dP/dP_0-1\right)\Vert^{\frac{2\nu+2}{\nu}}_{L^2(P_0)}\right),$$ and if $P_0$ is non-degenerate, then
$$\mathcal{F}(\nu;M)\subset \range (L_K^{\frac{1}{2\nu+2}-\epsilon}),\,\forall\,\epsilon>0,\,\,\text{i.e.,}\,\,\mathcal{F}(\nu;M)\subset \range (L_K^\eta),\,\,\forall\,0\le\eta <\frac{1}{2\nu+2}.$$

In this work, we employ an operator theoretic perspective to the goodness-of-fit test problem involving $\T$ (see Section~\ref{Sec:spec} for details), which is a centered version of the integral operator $L_K$. The centered version is needed to do away with the assumption of $\mu_0=0$, which is assumed in \citealp{Krishna}. Therefore, we choose $\mathcal{P}_\Delta$ similar to the form in \eqref{Eq:alternative-theta1} but with $L_K$ replaced by $\T$. We write it as

\begin{equation}
\PP:=\PP_{\theta,\Delta}:= \left\{P: \frac{dP}{dP_0}-1 \in \range (\T^{\theta}), 
 \ \chi^2(P,P_0)=\norm{\frac{dP}{dP_0}-1}^2_{L^2(P_0)} \geq \Delta\right\}, \label{Eq:alternative-theta}
\end{equation} for $\theta > 0$. Note that $\theta$ and $\nu$ are in inverse proportion to each other and $\theta=\frac{1}{2}$ yields $\range(\T^\theta)=\tilde{\h}$, with $0<\theta<\frac{1}{2}$ yielding interpolation spaces and $\theta=0$ corresponds to $L^2(P_0)$, where $\tilde{\h}$ is the RKHS induced by the centered kernel, $$\bar{K}(x,y)= \inner{K(\cdot,x)-\mu_0}{K(\cdot,y)-\mu_0}_{\h}.$$ The explicit representation of $\range(\T^\theta)$ typically relies on the kernel $K$ and the distribution $P_0$. If the kernel $K$ has a Mercer decomposition with respect to eigenfunctions that constitute an orthonormal basis for $\Lp$, then $\range(\T^\theta)$ comprises functions within the span of these orthonormal basis functions. For instance, in the following examples, we present an explicit representation of $\range(\T^\theta)$ when $P_0$ is a uniform distribution on (i) $[0,1]$, (ii) $\mathbb{S}^2$, a unit sphere, and (iii) when $P_0$ is a standard Gaussian distribution on $\mathbb{R}$. 
In this context, $\range(\T^\theta)$ can be expressed in terms of Fourier basis in Example 1, spherical harmonic basis in Example 2 and Hermite polynomials basis in Example 3.
\begin{example}[Uniform distribution on {$[0,1]$}] \label{Ex: uniform}
 Let $P_0$ be the uniform distribution defined on $[0,1]$ with \begin{equation}
     K(x,y)= a_0+ \sum_{k \neq 0} |k|^{-\beta} e^{\sqrt{-1}2\pi kx}e^{-\sqrt{-1}2\pi ky},\,\,a_0 \geq 0,\,\,\beta>1. \label{eq:kernel-fourier}\end{equation}  \\Then 
 $$\emph{\range}(\T^\theta) = \left\{ \sum_{k \neq 0} a_k e^{\sqrt{-1}2 \pi k x } : \sum_{k \neq 0} a_k^2 k^{2\theta \beta} < \infty \right\}.$$\\ \vspace{2mm} Note that the $s$-order Sobolev space defined on $[0,1]$ is given by
    $$\mathcal{W}^{s,2}:=\left\{f(x)=\sum_{k\in\mathbb{Z}} a_k e^{\sqrt{-1}2\pi kx},\,x\in[0,1]: \sum_{k\in\mathbb{Z}}(1+k^2)^sa^2_k<\infty\right\}.$$ Since  $\sum_{k\neq 0} k^{2\theta \beta}a_k^2 \leq \sum_{k \in\mathbb{Z}} (1+k^2)^{\theta \beta}a_k^2$, it follows that $\mathcal{W}^{s,2}\subset \emph{\range}(\T^\theta)$. This means, if $u:= \frac{dP}{dP_0}-1 \in \mathcal{W}^{s,2}$, then $u \in \emph{\range}(\T^\theta)$ with $\theta=\frac{s}{\beta}.$ 
  An example of a kernel that follows the form in \eqref{eq:kernel-fourier} is the periodic spline kernel, represented as $\tilde{K}(x,y)=\frac{(-1)^{r -1}}{(2r) !}B_{2r}([x-y]),$ where $B_r$ denotes the Bernoulli polynomial, which is generated by the generating function $\frac{te^{tx}}{e^t-1}=\sum_{r=0}^{\infty}B_r(x)\frac{t^r}{r!},$ and $[t]$ denotes the fractional part of $t$. Then using the formula $B_{2r}(x)=\frac{(-1)^{r-1}(2r)!}{(2\pi)^{2r}}\sum_{k\neq 0}^{\infty} |k|^{-2r} e^{\sqrt{-1}2\pi kx},$ it can be demonstrated that $\tilde{K}(x,y) = \sum_{k \neq 0}(2\pi |k|)^{-2r} e^{\sqrt{-1}2\pi kx}e^{-\sqrt{-1}2\pi ky}$ (see \citealp[page 21]{Wahba1990} for details).  
\end{example}
\begin{example}[Uniform distribution on $\mathbb{S}^2$]\label{Ex:uniform2}
    Let $P_0$ be a uniform distribution on $\X=\mathbb{S}^{2},$ where $\mathbb{S}^2$ denotes the unit sphere. Let 
    \begin{equation}
    K(x,y):= \sum_{k=1}^{\infty}\sum_{j=-k}^{k}\lambda_k Y^j_k(\theta_x,\phi_x)Y^j_k(\theta_y,\phi_y). \label{Eq:kernel-sphere}    
    \end{equation}
     where 
    $x =(\sin \theta_x \cos \phi_x,\sin \theta_x \sin\phi_x,\cos \theta_x)$, $y =(\sin \theta_y \cos \phi_y,\sin \theta_y \sin\phi_y,\cos \theta_y)$ with $0<\theta_x,\theta_y<\pi$, $0<\phi_x,\phi_y<2\pi$, and $$Y^j_{k}(\theta,\phi):= \sqrt{\frac{(2k+1)(k-j)!}{4\pi (k+j)!}}p^j_k(\cos \theta)e^{\sqrt{-1}j\phi},$$ with $p^j_k(x) := (-1)^j (1-x^2)^{\frac{j}{2}} \frac{d^j p_k(x)}{dx^j}$ and $p_k(x):= \frac{1}{k!2^k}\frac{d^k(x^2-1)^k}{dx^k}$. Here $(Y^j_k(\theta,\phi))_{j,k}$ denote the spherical harmonics which form an orthonormal basis in $L^2(\mathbb{S}^2)$. If $\sum_{k=1}^{\infty}(2k+1) \lambda_k < \infty,$ then     
    $$ \emph{\range}(\T^\theta) =\left\{\sum_{k=1}^{\infty}\sum_{j=-k}^{k}a_{kj}Y_k^j(\theta_x,\phi_x) : \sum_{k=1}^{\infty}\sum_{j=-k}^{k} a_{kj}^2 \lambda_k^{-2\theta}< \infty\right\}.$$
    Many common kernels take the form in \eqref{Eq:kernel-sphere}. For example, \citet[Theorem 2 and 3]{unibound} provide explicit expressions for the eigenvalues corresponding to Gaussian and polynomial kernels on the sphere. 

\end{example}

\begin{example}[Gaussian distribution with Mehler kernel on $\mathbb{R}$] \label{Ex: Gaussian}
  Let $P_0$ be a standard Gaussian distribution on $\mathbb{R}$ and $K$ be the Mehler kernel, i.e., $$K(x,y):=\frac{1}{\sqrt{1-\rho^2}}\exp\left(-\frac{\rho^2(x^2+y^2)-2 \rho xy}{2(1-\rho^2)}\right),$$ for $0<\rho<1$. Then 
 $$\emph{Ran} (\T^{\theta})=\left\{\sum_{k=1}^{\infty}a_{k}\gamma_k(x) : \sum_{k=1}^{\infty}a_k^2e^{-2k\theta\log \rho} < \infty \right\},$$
    where 
    $\gamma_k(x) = \frac{H_k(x)}{\sqrt{k!}} ,$ and $H_k(x) = (-1)^ke^{x^2/2}\frac{d^k}{dx^k}e^{-x^2/2}$ is the Hermite polynomial. \citet[Theorem 4.6]{Steinwart2012MercersTO} provides an interpretation of $\emph{Ran}(\mathcal{T}^\theta)$ as a real interpolation of $L^2(P_0)$ and $\tilde{\mathscr{H}}$. Therefore, with the kernel being fixed, the influence of $P_0$ on $\emph{Ran}(\mathcal{T}^\theta)$ can be understood as follows. Suppose $P_{0,\sigma_i}:=N(0,\sigma^2_i)$, $i=1,2$. It is easy to verify that $L^2(P_{0,\sigma_1})\subset L^2(P_{0,\sigma_2})$ if $\sigma_2<\sigma_1$, which implies that $\emph{Ran}(\mathcal{T}_1^\theta)\subseteq \emph{Ran}(\mathcal{T}^\theta_2)$, where $\mathcal{T}_i$ is the integral operator defined w.r.t.~$P_{0,\sigma_i}$, $i=1,2$. Based on this intuition, in the context of this example, choosing $P_0$ as a Gaussian distribution with variance larger (\emph{resp.} smaller) than 1 yields a smaller (\emph{resp.} larger) range space than that mentioned above.

\end{example}

With this background, we now present our contributions.

\subsection{Contributions}\label{subsec:contributions}
The main contributions of the paper are as follows:\vspace{1.5mm}\\
\emph{(i)}  First, in Theorem~\ref{thm: MMD}, we show that the test based on $\hat{D}^2_{\text{MMD}}$ (we refer to it as the MMD test) cannot achieve a separation boundary better than $n^{\frac{-2\theta}{2\theta+1}}$ w.r.t.~$\mathcal{P}$ defined in \eqref{Eq:alternative-theta}. This is an extension and generalization of \cite[Theorem 1]{Krishna}, which only shows such a claim for $\theta=\frac{1}{2}$ in an asymptotic setting, assuming $\mu_0=0$ and the \emph{uniform boundedness condition}, $\sup_i \Vert \phi_i\Vert_\infty<\infty,$ where $(\phi_i)_i$ are the eigenfunctions of $\T.$ In contrast, Theorem~\ref{thm: MMD} presents the result both by assuming and not assuming the uniform boundedness condition.
Note that the uniform boundedness condition $\sup_i\norm{\phi_i}_{\infty} < \infty$ is not satisfied in many scenarios (of course, it is satisfied in Example~\ref{Ex: uniform}). For example, as illustrated in \citet[Theorem 5]{unibound}, for $\X=\mathbb{S}^{d-1}$, representing the $d$-dimensional unit sphere, $\sup_i\norm{\phi_i}_{\infty}= \infty$ for all $d\geq3$ when using any kernel of the form $K(x,y)=f(\langle x,y\rangle_2)$, where $x,y\in \X$ and $f$ is continuous (see Example~\ref{Ex:uniform2}). The Gaussian kernel on $\mathbb{S}^{d-1}$ serves as an instance of such a kernel. Moreover, the condition $\mu_0=0$ is not satisfied by any characteristic kernel on general domain $\mathcal{X}$ and therefore excludes popular kernels such as Gaussian, Mat\'{e}rn, and inverse multiquadric on $\mathbb{R}^d$. Relaxing these two assumptions allows a large class of $(K,P_0)$ pairs to be handled by Theorem~\ref{thm: MMD}.
\vspace{1.5mm}\\
\emph{(ii)} Note that the separation boundary of the MMD test depends only on the smoothness of $dP/dP_0-1$, which is determined by $\theta$ but is completely oblivious to the \emph{intrinsic dimensionality} of the RKHS, $\tilde{\mathscr{H}}$, which is controlled by the decay rate of the eigenvalues of $\mathcal{T}$. To this end, by taking into account the intrinsic dimensionality of $\tilde{\mathscr{H}}$, we show in Theorem~\ref{thm:minimax} that the minimax separation w.r.t.~$\mathcal{P}$ is $n^{-\frac{4\theta\beta}{4\theta\beta+1}}$ for $\theta>\frac{1}{2}$ if $\lambda_i\asymp i^{-\beta}$, $\beta>1$, i.e., the eigenvalues of $\mathcal{T}$ decay at a polynomial rate $\beta$, and is $\sqrt{\log n}/n$ if $\lambda_i\asymp e^{-i}$, i.e., exponential decay. These results clearly establish the non-optimality of the MMD-based test. Theorem~\ref{thm:minimax}, which is non-asymptotic, generalizes the asymptotic version of \citep[Theorem 4] {Krishna} without requiring the uniform boundedness condition and also recovers it under the uniform boundedness condition, while not requiring $\mu_0=0$ for both these results. Moreover, even under the uniform boundedness condition, while \citep[Theorem 4] {Krishna} provides a bound on the minimax separation for $\frac{1}{2}>\theta>\frac{1}{2\beta}$, we improve this range in Theorem~\ref{thm:minimax} by showing the 
 minimax separation for $\theta>\frac{1}{4\beta}$. 
\vspace{1.5mm}\\
\emph{(iii)} In Section~\ref{Sec:spec}, we employ an operator theoretic perspective to the regularization idea presented in \cite{Krishna} that allows us to generalize the idea to handle general spectral regularizers, without requiring $\mu_0=0$. More precisely, we propose a statistic of the form $\eta_\lambda(P,P_0):=\norm{\gSL(\mu_P-\mu_{P_0})}_{\h}^2$, which when $g_{\lambda}(x)=(x+\lambda)^{-1}$, i.e., Tikhonov regularization, and $\mu_0=0$ reduces to the regularized statistic in \eqref{eq:regularized_test}. Here $\Sigma_0$ corresponds to the centered covariance operator w.r.t.~$P_0$. 

Assuming $\mu_0$ and $\Sigma_0$ are computable, we propose a spectral regularized test based on $\eta_\lambda$ and provide sufficient conditions on $g_{\lambda}$ for the test to be minimax optimal w.r.t.~$\mathcal{P}$ (see Theorems~\ref{thm:typI-oracle}, \ref{thm:typII-oracle} and Corollaries~\ref{coro:poly-oracle}, \ref{coro:exp-oracle}). Compared to the results in \citep{Krishna}, we provide general sufficient conditions on the separation boundary for any bounded kernel and show the minimax optimality in the non-asymptotic setting for a wider range of $\theta$, both with and without the uniform boundedness condition (see Theorem~\ref{thm:typII-oracle}). However, the drawback of the test is that one needs first to compute the eigenvalues and eigenfunctions of $\Sigma_0$ which is not possible for many $(K,P_0)$ pairs. Thus we refer to this test as the \emph{Oracle test.}\vspace{1.5mm}\\
\emph{(iv)} To address the shortcomings with the Oracle test, in Section~\ref{subsec:test-statistic}, we assume that $P_0$ is samplable, i.e., $P_0$ can be sampled to generate new samples. Based on these samples, we propose a test statistic defined in \eqref{eq:twosamplestat} that involves using the estimators of $\mu_0$ and $\Sigma_0$ in $\eta_\lambda$. We show that such a test statistic can be computed only through matrix operations and by solving a finite-dimensional eigensystem (see Theorem \ref{thm: computation}). We present two approaches to compute the critical level of the test. In Section~\ref{subsec:srct}, we compute the critical level based on a  concentration inequality and refer to the corresponding test as \emph{spectral regularized concentration test} (SRCT), and in Section~\ref{subsec:srpt}, we employ permutation testing (e.g., \citealt{lehmann}, \citealt{Pesarin}, \citealt{permutations}), which we refer to as the \emph{spectral regularized permutation test} (SRPT), leading to a critical level that is easy to compute (see Theorems~\ref{thm:typI-Gamma} and \ref{thm: permutations typeI}). We show that both these tests are minimax optimal w.r.t.~$\mathcal{P}$ (see Theorems~\ref{thm:typII-Gamma} and \ref{thm: permutations typeII}). Note that under these additional samples from $P_0$, a goodness-of-fit test can be seen as a two-sample test, and therefore SRCT and SRPT can be interpreted as two-sample tests. Recently, \cite{twosampletest} developed a \emph{spectral regularized kernel two-sample test} (SR2T) and showed it to be minimax optimal for a suitable class of alternatives. In this work, we show that SRCT and SRPT have better separation rates than those of SR2T for the range of $\theta$, where all these tests are not minimax.
\vspace{1.5mm}\\
\emph{(v)} The minimax optimal separation rate in the proposed tests (SRCT and SRPT) is tightly controlled by the choice of the regularization parameter, $\lambda$, which in turn depends on the unknown parameters, $\theta$ and $\beta$ (in the case of the polynomial decay of the eigenvalues of $\mathcal{T}$). Therefore, 
to make these tests completely data-driven, in Section~\ref{subsec:adaptation}, we present an adaptive version of both tests by aggregating tests over different $\lambda$ (see Theorems~\ref{thm: adp-gamma typeI} and \ref{thm:perm adp typeI}) and show the resulting tests to be minimax optimal up to a $\sqrt{\log n}$ factor in case of the SRCT (see Theorem~\ref{thm: adp-gamma TypeII}) and $\log\log n$ factor in case of SRPT (see Theorem~\ref{thm:perm adp TypeII}). In contrast, \cite[Theorem 5]{Krishna} considers an adaptive and asymptotic version of the Oracle test under $\mu_0=0$ and the uniform boundedness condition, where it only adapts over $\theta$ assuming $\beta$ is known, with $\beta$ being the polynomial decay rate of the eigenvalues of $\mathcal{T}$.
\vspace{1.5mm}\\
\emph{(vi)} Through numerical simulations on benchmark data, in Section~\ref{sec:experiments}, we demonstrate the superior performance of the proposed spectral regularized tests in comparison to the MMD test based on $\hat{D}_{\mathrm{MMD}}(P,P_0)$, Energy test \citep{Energy} based on the energy distance, Kolmogorov-Smirnov (KS) test \citep{KS,Fasano}, and SR2T.
\subsection{Comparison to \citet{twosampletest}} \label{subsection:comparision}
As mentioned in \emph{(iv)} of Section~\ref{subsec:contributions}, the proposed goodness-of-fit tests (SRCT and SRPT) can be seen as two-sample tests because of access to additional samples from $P_0$. Similar to the two-sample test SR2T proposed in \cite{twosampletest}, these tests also employ the spectral regularization approach of \cite{twosampletest} and their analysis uses many technical results developed in \cite{twosampletest}. Therefore, to emphasize the conceptual and technical novelty of our work, in this section, we compare and contrast our setup and results to that of \cite{twosampletest}. \vspace{1.25mm}\\
\emph{(i)} \textbf{Alternative space}: In this paper, the alternative space, $\mathcal{P}_\Delta$ shown in \eqref{Eq:alternative-theta} involves a smoothness condition that is defined with respect to the function $u:=\frac{dP}{dP_0}-1$. In contrast, the smoothness condition in \citet{twosampletest} was defined through $\frac{dP}{dR}-1$, where $R=\frac{P+P_0}{2}$. The separation boundary in this paper is measured in $\chi^2$-distance, i.e., $\chi^2(P,P_0)$ compared to the Hellinger distance between $P$ and $P_0$ (which is topologically equivalent to $\chi^2(P,R)$) as in \cite{twosampletest}. Since the $\chi^2$-divergence dominates the Hellinger distance, the notion of separation considered in this paper is stronger than the one considered in \cite{twosampletest}. 

These changes were made to leverage the knowledge of $P_0$ in the goodness-of-fit problem (which is not available in the two-sample problem), resulting in a better separation boundary than that achieved by the test proposed in \citet{twosampletest}.\vspace{1mm}\\
 \emph{(ii)} \textbf{Estimation of the covariance operator, $\Sigma_0$}: In \cite{twosampletest}, the covariance operator $\Sigma_{0}$ is defined with respect to the average probability measure $R:=\frac{P+P_0}{2}$, which means two sets of samples are required to estimate it and therefore, the estimation error is controlled by the minimum of sizes of two sets of samples. However, in this paper, since we are considering a goodness-of-fit problem where the null distribution $P_0$ is known, we can utilize this knowledge by defining the covariance operator $\Sigma_0$ with respect to $P_0$, which means the estimation error is controlled only by the samples from $P_0$. Since we do not have any budget constraints on sampling from $P_0$, the estimation error of $\Sigma_0$ can be controlled at a desired level for a large enough sample size. Therefore, we investigate the required number of i.i.d.~samples $s$ to be drawn from $P_0$ to estimate $\Sigma_{0}$ to achieve a similar separation boundary as the oracle test, which assumes $\Sigma_{0}$ is exactly known in closed form. Both in this work and \cite{twosampletest}, while the separation rates are determined by the minimum size of the two sets of samples, since the sample size associated with $P_0$ in this work can be chosen to be large enough, the separation rate will be controlled only by the one sample size. Therefore this work yields better separation rates than those in \cite{twosampletest}---also see \emph{(iv)} in Section~\ref{subsec:contributions}---as it should be since a goodness-fit-testing problem is simpler than a two-sample testing problem.\vspace{1mm}\\
   \emph{(iii)} \textbf{Spectral regularized concentration test (SRCT)}: While SRPT proposed in this paper shares in principle the similar ideas of permutation testing as in \cite{twosampletest}, the proposed SRCT involves a concentration inequality based test threshold that was not considered in \citet{twosampletest}. While the analysis of SRPT uses multiple technical results developed in \cite{twosampletest}---of course, with some deviations because of a different alternate space and estimator for the covariance operator---the analysis of SRCT requires establishing new technical results for the estimation error bounds between $\Sigma_0$ and $\hat{\Sigma}_0$ (see Lemmas~\ref{lemma:adaptation Upper} and~\ref{lemma:adaptation lower}), and the estimation error between $\Ntlh$ and $\Ntl$, where $\Ntl := \Vert\SgL\Sigma_{0}\SgL\Vert_{\hs}$ and $\Sigma_{0,\lambda}:=\Sigma_0+\lambda \Id$ (see Lemmas~\ref{lemma: ntlh upper bound} and~\ref{lemma: ntlh lower bound}).

\section{Definitions \& Notation}
For a topological space $\X$, $L^r(\X,\mu)$ denotes the Banach space of $r$-power $(r\geq 1)$ $\mu$-integrable function, where $\mu$ is a finite non-negative Borel measure on $\X$. For $f \in L^r(\X,\mu)=:L^r(\mu)$, $\norm{f}_{L^r(\mu)}:=(\int_{\X}|f|^r\,d\mu)^{1/r}$ denotes the $L^r$-norm of $f$. $\mu^n := \mu \times \stackrel{n}{...} \times \mu$ is the $n$-fold product measure. $\h$ denotes a reproducing kernel Hilbert space with a reproducing kernel $K: \X \times \X \to \R$. $[f]_{\sim}$ denotes the equivalence class associated with $f\in L^r(\X,\mu)$, that is the collection of functions $g \in L^r(\X,\mu)$ such that $\norm{f-g}_{L^r(\mu)}=0$. For two measures $P$ and $Q$, $P \ll Q$ denotes that $P$ is dominated by $Q$ which means, if $Q(A)=0$ for some measurable set $A$, then $P(A)=0$. Let $H_1$ and $H_2$ be abstract Hilbert spaces. $\EuScript{L}(H_1,H_2)$ denotes the space of bounded linear operators from $H_1$ to $H_2$. For $S \in \EuScript{L}(H_1,H_2)$, $S^*$ denotes the adjoint of $S$. $S \in \EuScript{L}(H) := \EuScript{L}(H,H)$ is called self-adjoint if $S^*=S$. For $S \in \EuScript{L}(H)$, $\text{Tr}(S)$, $\norm{S}_{\EuScript{L}^2(H)}$, and $\norm{S}_{\EuScript{L}^{\infty}(H)}$ denote the trace, Hilbert-Schmidt and operator norms of $S$, respectively. For $x,y \in H$, $x \otimes_{H} y$ is an element of the tensor product space of $H \otimes H$ which can also be seen as an operator from $H \to H$ as $(x \otimes_{H}y)z=x\inner{y}{z}_{H}$ for any $z \in H$.

For constants $a$ and $b$, $a \lesssim b$ (resp. $a \gtrsim b$) denotes that there exists a positive constant $c$ (\emph{resp.} $c'$) such that $a\leq cb$ (\emph{resp.} $a \geq c' b)$. $a \asymp b$ denotes that there exists positive constants $c$ and $c'$ such that  $cb \leq a \leq c' b$. We denote $[\ell]$ for $\{1,\ldots,\ell\}$.

\section{Non-optimality of $\hat{D}^2_{\text{MMD}}$ test}\label{Sec:non-optimal}
Assuming $\mu_0=0$, \citep{Krishna} established the non-optimality of the MMD-based goodness-of-fit test. In this section, we extend this result in two directions by not assuming $\mu_0=0$ and by considering the setting of non-asymptotic minimax compared to the asymptotic minimax setting of \cite{Krishna}. The key to achieving these extensions is an operator representation of $D^2_{\text{MMD}}$, which we obtain below. To this end, we make the following assumption throughout the paper.\vspace{1.5mm}\\

$(A_0)$  $(\mathcal{X},\mathcal{B})$ is a second countable (i.e., completely separable) space endowed with Borel $\sigma$-algebra $\mathcal{B}$. $(\h,K)$ is an RKHS of real-valued functions on $\X$ with a continuous reproducing kernel $K$ satisfying
$\sup_{x} K(x,x) \leq \K.$ \vspace{1.5mm}\\

The continuity of $K$ ensures that $K(\cdot,x):\X \to \h$ is Bochner-measurable for all $x \in \X$, which along with the boundedness of $K$ ensures that $\mu_P$ and $\mu_{P_0}$ are well-defined \citep{Dincu}. Also, the separability of $\mathcal{X}$ along with the continuity of $K$ ensures that $\h$ is separable (\citealt[Lemma 4.33]{svm}). Therefore, 
\begin{align}
D^2_{\mathrm{MMD}}(P,P_0) &= \norm{\mu_P-\mu_{P_0}}^2_{\h} 
= \inner{\int_{\X}K(\cdot,x)\, d(P-P_0)(x)}{\int_{\X}K(\cdot,x)\, d(P-P_0)(x)}_{\h}\nonumber\\
&= \inner{\int_{\X}K(\cdot,x)u(x)\, d\PQ(x)}{\int_{\X}K(\cdot,x)u(x)\, d\PQ(x)}_{\h},\label{eq:mmd}
\end{align}
where $u=\frac{dP}{dP_0}-1$. As done in \citep{twosampletest}, by defining $\id : \h \to \Lp$, $f \mapsto [f - \E_{\PQ}f]_{\sim}$, 
where $\E_{\PQ}f=\int_{\X}f(x)\,d\PQ(x)$, it follows from \citep[Proposition C.2]{kpca} that $\id^* : \Lp \to \h$, $f \mapsto \int K(\cdot,x)f(x)\,d\PQ(x)-\mu_{\PQ} \E_{\PQ}f$. Also, it follows from \citep[Proposition C.2]{kpca} that $\T= \Upsilon- (1\ltens1)\Upsilon-\Upsilon(1\ltens1)+(1\ltens1)\Upsilon(1\ltens1)$, where $\Upsilon: \Lp \to \Lp$, $f \mapsto \int K(\cdot,x)f(x)\,d\PQ(x)$ and $\T:=\id \id^* : \Lp \to \Lp$. Note that $\T$ is a trace class operator, and thus compact since $K$ is bounded. Also, $\T$ is self-adjoint and positive, and therefore spectral theorem \citep[Theorems VI.16, VI.17]{Reed} yields that
$$\T = \sum_{i \in I} \lambda_i \Tilde{\phi_i} \ltens \Tilde{\phi_i},$$
where $(\lambda_i)_i \subset \R^+ $ are the eigenvalues and $(\Tilde{\phi}_i)_i$ are the orthonormal system of eigenfunctions (strictly speaking classes of eigenfunctions) of $\T$ that span $\overline{\range(\T)}$ with the index set $I$ being either countable in which case $\lambda_i \to 0$ or finite. In this paper, we assume that the set $I$ is countable, i.e., infinitely many eigenvalues.
Since $\Tilde{\phi_i}$ represents an equivalence class in $\Lp$, by defining $\phi_i:= \frac{\id^* \Tilde{\phi_i}}{\lambda_i}$, it is clear that $\id\phi_i=[\phi_i-\E_{\PQ}\phi_i]_{\sim}=\Tilde{\phi_i}$ and $\phi_i \in \h$. Throughout the paper, $\phi_i$ refers to this definition.

Using these definitions, it follows from \eqref{eq:mmd} that
\begin{align}
D^2_{\mathrm{MMD}}(P,P_0)&=  \inner{\id^*u}{\id^*u}_{\h}
=\inner{\T u}{u}_{\Lp} = \sum_{i\geq 1} \lambda_i \langle u,\Tilde{\phi_i}\rangle^2_{\Lp}.\nonumber
\end{align}
The above expression was already obtained by \citep[p. 6]{Krishna} but  through Mercer's representation of $K$. Here we obtain it alternately through the operator representation, which will turn out to be crucial for the rest of the paper. This representation also highlights the limitation of $D_{\mathrm{MMD}}$ that $D_\mathrm{MMD}$ might not capture the difference between between $P$ and $P_0$ if they differ in the higher Fourier coefficients of $u$, i.e., $\langle u,\tilde{\phi}_i\rangle_{L^2(P_0)}$ for large $i$, since $(\lambda_i)_i$ is a decreasing sequence. 

On the other hand, $\chi^2(P||P_0)=\norm{u}_{\Lp}^2=\sum_{i\geq1}\langle u,\Tilde{\phi_i}\rangle^2_{\Lp}$ if $u\in \text{span}\{\tilde{\phi}_i:i\in I\}$, does not suffer from such an issue. 

The following result shows that the test based on $\hat{D}_{\mathrm{MMD}}^2$ cannot achieve a separation boundary of order better than $n^{\frac{-2\theta}{2\theta+1}}$.

\begin{thm}[Separation boundary of MMD test] \label{thm: MMD}
Let $n\geq 2$ and $$\sup_{P\in \PP}\norm{\T^{-\theta}u}_{\Lp}<\infty.$$  Then for any $\alpha>0$, $\delta>0$, $P_{H_0}\{\hat{D}_{\mathrm{MMD}}^2 \geq \gamma\} \leq \alpha,$
$$\inf_{P\in \PP}P_{H_1}\{\hat{D}_{\mathrm{MMD}}^2  \geq \gamma\} \geq 1-\delta,$$
where $\gamma = \frac{4\kappa}{\sqrt{\alpha}n},$ 
$$\Delta_{n}:=\Delta=c(\alpha,\delta)n^{\frac{-2\theta}{2\theta+1}},$$ and 
$c(\alpha,\delta)\asymp\max\{\alpha^{-1/2},\delta^{-1}\}.$
Furthermore if $\Delta_{n} < d_{\alpha}n^{\frac{-2\theta}{2\theta+1}}$ for some  $d_{\alpha}>0$ and one of the following holds: (i) $\theta \geq \frac{1}{2}$, (ii) $\sup_{i}\norm{\phi_i}_{\infty} < \infty$, $\theta > 0$, then for any decay rate of $(\lambda_i)_i$, there exists $\tilde{k}_{\delta}$ such that for all $n \geq \tilde{k}_{\delta}$,  

$$\inf_{P\in \PP}P_{H_1}\{\hat{D}_{\mathrm{MMD}}^2  \geq \gamma\} < \delta.$$

\end{thm}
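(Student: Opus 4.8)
The theorem has two parts: a positive statement (the MMD test with threshold $\gamma = 4\kappa/(\sqrt{\alpha}n)$ has level $\le\alpha$ and power $\ge 1-\delta$ over $\PP$ for the stated $\Delta_n$), and a negative statement (if $\Delta_n$ is of order $n^{-2\theta/(2\theta+1)}$ exactly, the test fails to have power over $\PP$ for large $n$, under either of the two regimes for $\theta$). My plan is to attack these via moment computations for the U-statistic $\hat D^2_{\mathrm{MMD}}$ together with a careful construction of a bad alternative for the negative part.

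For the positive part, I would first record the expectation $\E_{P}[\hat D^2_{\mathrm{MMD}}] = D^2_{\mathrm{MMD}}(P,P_0) = \inner{\T u}{u}_{\Lp}$ and a variance bound of the usual U-statistic form: under $H_0$, $\E_{P_0}[\hat D^2_{\mathrm{MMD}}]=0$ and $\mathrm{Var}_{P_0}(\hat D^2_{\mathrm{MMD}}) \lesssim \kappa^2/n^2$ (the degenerate U-statistic regime, since the first-order projection vanishes under $H_0$, using $\sup_x K(x,x)\le\kappa$). Chebyshev's inequality then gives $P_{H_0}\{\hat D^2_{\mathrm{MMD}}\ge\gamma\}\le\alpha$ for $\gamma\asymp\kappa/(\sqrt\alpha\,n)$. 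Under $H_1$ the variance acquires an additional non-degenerate term of order $\mathrm{Var}\lesssim D^2_{\mathrm{MMD}}\kappa/n + \kappa^2/n^2$; combining with the separation requirement, it suffices to lower-bound $D^2_{\mathrm{MMD}}(P,P_0)=\inner{\T u}{u}_{\Lp}$ in terms of $\chi^2(P,P_0)=\norm{u}^2_{\Lp}$ and the smoothness $\norm{\T^{-\theta}u}_{\Lp}\le L$. The key interpolation inequality here is $\norm{u}^2_{\Lp}\le \inner{\T u}{u}_{\Lp}^{\frac{2\theta}{2\theta+1}}\norm{\T^{-\theta}u}_{\Lp}^{\frac{2}{2\theta+1}}$, obtained by splitting the spectral sum $\sum_i\lambda_i^0\inner{u,\tilde\phi_i}^2$ between small and large eigenvalues (or directly via Hölder on the spectral measure with exponents adapted to the powers $1$ and $-2\theta$). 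This yields $D^2_{\mathrm{MMD}}\gtrsim \Delta^{\frac{2\theta+1}{2\theta}}L^{-1/\theta}$, and then a Chebyshev argument for the power (comparing $\gamma$ against $D^2_{\mathrm{MMD}}$ minus a multiple of the standard deviation) forces $\Delta\gtrsim \max\{\alpha^{-1/2},\delta^{-1}\}n^{-2\theta/(2\theta+1)}$, giving the claimed $c(\alpha,\delta)$.

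For the negative part, the plan is to exhibit, for each $n$, a specific $P_n\in\PP$ (with $\Delta_n< d_\alpha n^{-2\theta/(2\theta+1)}$) on which the test is not powerful, i.e.\ $P_{H_1}\{\hat D^2_{\mathrm{MMD}}\ge\gamma\}<\delta$. The construction: choose $u_n$ supported on a single high-frequency mode $\tilde\phi_{k_n}$ (or a block of modes near index $k_n$), scaled so that $\norm{u_n}^2_{\Lp}=\chi^2(P_n,P_0)\asymp\Delta_n$ while $\norm{\T^{-\theta}u_n}_{\Lp}=\lambda_{k_n}^{-\theta}\norm{u_n}_{\Lp}$ stays bounded — this is possible by letting $k_n\to\infty$, since $\lambda_{k_n}\to0$; crucially this works for \emph{any} decay rate of $(\lambda_i)_i$, which is why the statement is decay-rate-free. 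Then $\E_{P_n}[\hat D^2_{\mathrm{MMD}}]=\inner{\T u_n}{u_n}=\lambda_{k_n}\norm{u_n}^2_{\Lp}\to 0$ faster than $\gamma\asymp 1/n$ once $\lambda_{k_n}$ is taken small enough relative to $n$, and a variance bound of order $\kappa^2/n^2 + \lambda_{k_n}\kappa\norm{u_n}^2/n$ together with Chebyshev (or a one-sided Markov bound on $\hat D^2_{\mathrm{MMD}}\ge 0$) shows $P_{H_n}\{\hat D^2_{\mathrm{MMD}}\ge\gamma\}<\delta$ for $n$ large. The role of the two cases: to land in $\PP$ we need $u_n\in\range(\T^\theta)$ and $P_n$ to be a genuine probability measure with $dP_n/dP_0 = 1+u_n\ge 0$; for $\theta\ge\frac12$ a one-mode bump works directly (and $\range(\T^\theta)\subseteq\tilde\h$ controls sup-norms enough via boundedness of $K$), whereas for $0<\theta<\frac12$ one needs $\sup_i\norm{\phi_i}_\infty<\infty$ to ensure $\norm{u_n}_\infty$ stays controlled (so that $1+u_n\ge0$) while $k_n$ grows, since without uniform boundedness the eigenfunctions could blow up in sup-norm and the truncated bump might fail nonnegativity. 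I expect the main obstacle to be exactly this last point — verifying that the adversarial $u_n$ simultaneously lies in $\range(\T^\theta)$ with bounded norm, has the prescribed $L^2(P_0)$ size $\asymp\Delta_n$, and yields a legitimate density $1+u_n\ge0$ — and getting the two-case split ($\theta\ge\frac12$ versus uniform boundedness) to cover the density-nonnegativity constraint cleanly.
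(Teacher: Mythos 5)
Your proposal is correct and follows essentially the same route as the paper: the positive part via the degenerate/non-degenerate U-statistic variance decomposition, Chebyshev, and the interpolation inequality $\norm{u}^2_{\Lp}\le \inner{\T u}{u}_{\Lp}^{\frac{2\theta}{2\theta+1}}\norm{\T^{-\theta}u}_{\Lp}^{\frac{2}{2\theta+1}}$ (which the paper outsources to a lemma of the two-sample companion paper); and the negative part via a single-mode perturbation $u\propto\lambda_k^{\theta}\bar\phi_k$ with $\lambda_k\asymp n^{-1/(2\theta+1)}$, with exactly the same two-case split ($\theta\ge\tfrac12$ via $\norm{\phi_k}_\h=\lambda_k^{-1/2}$, versus uniform boundedness) to guarantee $1+u\ge0$.
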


\begin{rem} 
Note that the above theorem also holds asymptotically if the testing threshold $\gamma$ is chosen as the $(1-\alpha)$-quantile of the asymptotic distribution of $\hat{D}_{\mathrm{MMD}}^2$ under $H_0$, thereby extending \cite[Theorem 1]{Krishna}, which only considers $\theta=\frac{1}{2}$ but assuming $\mu_0=0$. In fact, the result holds for any threshold that converges in probability to such an asymptotic quantile. 
\end{rem}

By providing the minimax separation rate w.r.t.~$\mathcal{P}$, the following result demonstrates the non-optimality of the MMD test presented in Theorem~\ref{thm: MMD}.
\begin{thm}[Minimax separation boundary] \label{thm:minimax}
If $\lambda_i \asymp i^{-\beta}$, $\beta>1$, then 
there exists $c(\alpha,\delta)$ such that if $$\Delta_{n}  \leq  c(\alpha,\delta) n^{\frac{-4\theta\beta}{4\theta\beta+1}},\,\,0 \leq \delta \leq 1-\alpha,$$ then $R^*_{\Delta_{n}}:= \inf_{\phi \in \Phi_{n,\alpha}} R_{\Delta_{N,M}}(\phi)\geq \delta,$ provided 
one the following holds: (i) $\theta \geq \frac{1}{2}$, (ii) $\sup_{i}\norm{\phi_i}_{\infty} < \infty$, $\theta \geq \frac{1}{4\beta},$ where $R_{\Delta_{n}}(\phi):=\sup_{P\in\PP}\E_{P^n }[1-\phi].$

Suppose $\lambda_i \asymp e^{-\tau i}$, $\tau>0$, $\theta > 0$. Then there exists $c(\alpha,\delta,\theta)$ and $k$ such that if $$\Delta_{n}  \leq  c(\alpha,\delta,\theta)\frac{\sqrt{\log n}}{n},$$ and $n\geq k$, then for any $0 \leq \delta \leq 1-\alpha,$ $R^*_{\Delta_{n}} \geq \delta.$ 

\end{thm}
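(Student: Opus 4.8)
The statement is a minimax lower bound for testing, so the natural route is the classical Ingster--Le Cam two-point (mixture) method. The plan is to construct, for every admissible $\Delta_n$, a family of alternatives $\{P_\omega\}_{\omega\in\{-1,1\}^m}\subset\PP$, average over $\omega$ to form a mixture $\bar P:=\E_\omega P_\omega$, and show that $\bar P^{\,n}$ is nearly indistinguishable from $\PQ^{\,n}$, so that no exact $\alpha$-level test can have Type-II error below $\delta$ over $\PP$.

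\emph{Construction.} I would set $u_\omega:=\frac{dP_\omega}{dP_0}-1=a\sum_{i\in B}\omega_i\tilde{\phi}_i$, where $B$ is a block of $m$ consecutive eigenfunctions of $\T$ and $a>0$ is a common amplitude. Since $\E_{\PQ}\tilde{\phi}_i=0$, each $P_\omega$ integrates to one, and $u_\omega=\T^\theta\big(a\sum_{i\in B}\omega_i\lambda_i^{-\theta}\tilde{\phi}_i\big)\in\range(\T^\theta)$ automatically; hence $P_\omega\in\PP$ once (a) $\chi^2(P_\omega,\PQ)=\norm{u_\omega}_{\Lp}^2=a^2m\ge\Delta_n$, (b) $\norm{\T^{-\theta}u_\omega}_{\Lp}^2=a^2\sum_{i\in B}\lambda_i^{-2\theta}\lesssim1$ (so the interpolation norm stays within the radius of the class), and (c) $\norm{u_\omega}_\infty\le\tfrac12$ so that $P_\omega\ge0$. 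I would fix $(m,a)$ by balancing (a)--(b): for $\lambda_i\asymp i^{-\beta}$, taking $B=\{m,\dots,2m\}$ gives $\sum_{i\in B}\lambda_i^{-2\theta}\asymp m^{2\theta\beta+1}$, so $m\asymp\Delta_n^{-1/(2\theta\beta)}$ and $a^2\asymp\Delta_n^{1+1/(2\theta\beta)}$; for $\lambda_i\asymp e^{-\tau i}$, taking $B=\{1,\dots,m\}$ gives $\sum_{i\in B}\lambda_i^{-2\theta}\asymp e^{2\tau\theta m}$, and one takes $m\asymp\log(1/\Delta_n)$ (with a $\theta$-dependent constant), hence $a^2\asymp\Delta_n/\log(1/\Delta_n)$.

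\emph{Verifying (c).} This step produces the case split in the hypotheses. If $\theta\ge\tfrac12$, then $u_\omega\in\tilde{\h}$ and $\norm{u_\omega}_\infty\lesssim\norm{u_\omega}_{\tilde{\h}}$ with $\norm{u_\omega}_{\tilde{\h}}^2=a^2\sum_{i\in B}\lambda_i^{-1}\le a^2\sum_{i\in B}\lambda_i^{-2\theta}\lesssim1$ by (b), so shrinking $a$ by a constant gives (c) for every $\omega$. In the exponential case the same idea works for all $\theta>0$: when $\theta<\tfrac12$ it is (c), not (b), that binds, but since $m$ is only logarithmic, enforcing $\norm{u_\omega}_\infty\le\tfrac12$ changes only the constant, not the order of $m$ or $a$. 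Under just the uniform boundedness condition $\sup_i\norm{\phi_i}_\infty<\infty$ (polynomial case, $\tfrac1{4\beta}\le\theta<\tfrac12$), $u_\omega$ need not lie in $\tilde{\h}$, and the crude estimate $\norm{u_\omega}_\infty\le a\sum_{i\in B}\norm{\tilde{\phi}_i}_\infty\lesssim am$ only yields $\theta>\tfrac1{2\beta}$ (the range of \cite{Krishna}); instead one bounds the supremum over $\X$ of the Rademacher process $x\mapsto a\sum_{i\in B}\omega_i\tilde{\phi}_i(x)$ using its subgaussianity in $\omega$ (pointwise variance proxy $\lesssim a^2m$) together with orthonormality and the eigenvalue decay, obtaining a bound of order $a\,m^{3/4}$ up to logarithmic factors with probability tending to one. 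Imposing $a\,m^{3/4}\to0$ against $m\asymp\Delta_n^{-1/(2\theta\beta)}$, $a^2\asymp\Delta_n/m$ is precisely the threshold $\theta>\tfrac1{4\beta}$. Since (c) then fails only on a vanishing fraction of sign patterns, one restricts the mixture to the complementary good set, which affects only constants below.

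\emph{Indistinguishability and conclusion.} With $\bar P$ the (restricted) mixture, Le Cam's bound gives, for any $\phi\in\Phi_{n,\alpha}$, $\sup_{P\in\PP}\E_{P^n}[1-\phi]\ge1-\alpha-\mathrm{TV}(\bar P^{\,n},\PQ^{\,n})\ge1-\alpha-\tfrac12\sqrt{\chi^2(\bar P^{\,n}\|\PQ^{\,n})}$. Using $\E_{\PQ}u_\omega=0$ and $\inner{u_\omega}{u_{\omega'}}_{\Lp}=a^2\sum_{i\in B}\omega_i\omega'_i$ with $(\omega_i\omega'_i)_i$ i.i.d.\ Rademacher,
\begin{align*}
\chi^2(\bar P^{\,n}\|\PQ^{\,n})+1&=\E_{\omega,\omega'}\Big(1+a^2\textstyle\sum_{i\in B}\omega_i\omega'_i\Big)^{n}\\
&\le\E_{\omega,\omega'}e^{na^2\sum_{i\in B}\omega_i\omega'_i}=\cosh^{m}(na^2)\le e^{mn^2a^4/2},
\end{align*}
where $(1+t)^n\le e^{nt}$ is legitimate since $a^2m\asymp\Delta_n\to0$ keeps the base positive. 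Hence, choosing $c(\alpha,\delta)$ so that $mn^2a^4$ stays below a constant depending only on $\alpha,\delta$ forces $\chi^2(\bar P^{\,n}\|\PQ^{\,n})$ to be small, so $\mathrm{TV}\le 1-\alpha-\delta$ and $R^*_{\Delta_n}\ge\delta$. Substituting the balanced parameters, $mn^2a^4\asymp n^2\Delta_n^{\,2+1/(2\theta\beta)}$ in the polynomial case, which is $\lesssim1$ exactly when $\Delta_n\lesssim n^{-4\theta\beta/(4\theta\beta+1)}$; and $mn^2a^4\asymp n^2\Delta_n^{2}/\log(1/\Delta_n)$ in the exponential case, which is $\lesssim1$ when $\Delta_n\lesssim\sqrt{\log(1/\Delta_n)}/n$, satisfied self-consistently by $\Delta_n\asymp\sqrt{\log n}/n$ for $n\ge k$ (and with a $\theta$-dependent constant, inherited from $m$). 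These are the two displayed thresholds.

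\emph{Main obstacle.} The delicate point is verifying (c) in the regime $\tfrac1{4\beta}\le\theta<\tfrac12$, where no RKHS embedding is available: one must show the $P_\omega$ are genuine probability measures for a set of sign patterns of probability tending to one, which hinges on a supremum bound for a Rademacher process over $\X$ that beats the naive $\ell^1$ estimate — precisely the ingredient that sharpens \cite{Krishna}'s range $\theta>\tfrac1{2\beta}$ to $\theta>\tfrac1{4\beta}$. The chi-square / $\cosh$ estimate and the parameter bookkeeping are otherwise routine.
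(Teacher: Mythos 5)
Your overall strategy---an Ingster/Le Cam mixture of perturbed alternatives followed by a $\chi^2$/$\cosh$ bound on the mixture likelihood ratio---is the same as the paper's, and your parameter bookkeeping (the balance of $m$, $a$, and the resulting thresholds $n^{-4\theta\beta/(4\theta\beta+1)}$ and $\sqrt{\log n}/n$) is correct. The cases $\theta\ge\frac12$ (where the RKHS embedding controls $\norm{u_\omega}_\infty$) and the exponential-decay case are fine. However, there is a genuine gap exactly at the point you flag as delicate: the regime $\frac{1}{4\beta}\le\theta<\frac12$ under uniform boundedness, which is the paper's claimed improvement over the range $\theta>\frac{1}{2\beta}$ of \cite{Krishna}. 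Your argument there rests on a uniform bound of order $a\,m^{3/4}$ (up to logs) for $\sup_{x\in\X}\bigl|a\sum_{i\in B}\omega_i\tilde\phi_i(x)\bigr|$. Subgaussianity in $\omega$ gives only a \emph{pointwise} bound of order $a\sqrt{m}$; upgrading it to a supremum over an arbitrary second-countable $\X$ requires metric-entropy or smoothness control on the eigenfunctions, which is not assumed anywhere (only $\sup_i\norm{\phi_i}_\infty<\infty$ is). Without such control the only generic bound is the $\ell^1$ bound $a\,m\sup_i\norm{\phi_i}_\infty$, which recovers only $\theta>\frac{1}{2\beta}$. Moreover, restricting to a ``good set'' of sign patterns would destroy the independence of the coordinates of $\omega\omega'$ that your $\cosh^m$ computation relies on, so even granting the concentration claim the conclusion does not follow as written.

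The paper avoids this issue with a different (deterministic) device: instead of dense Rademacher signs it uses \emph{sparse} binary perturbations $u_{n,k}=a_n\sum_{i=1}^{B_n}\epsilon_{ki}\bar\phi_i$ with $\epsilon_k\in\{0,1\}^{B_n}$ having exactly $C_n=\lfloor\sqrt{B_n}\rfloor$ nonzero entries and $a_n=\sqrt{\Delta_n/C_n}$. Then $\norm{u_{n,k}}_\infty\le a_nC_n\sup_i\norm{\phi_i}_\infty\asymp\sqrt{\Delta_nC_n}\asymp\sqrt{\Delta_n}\,B_n^{1/4}$ holds for \emph{every} $k$ with no probabilistic argument, and with $B_n\asymp\Delta_n^{-1/(2\theta\beta)}$ this vanishes precisely when $\theta>\frac{1}{4\beta}$---the sparsity level $C_n\asymp\sqrt{B_n}$ is the ingredient that sharpens the range, not a process-supremum bound. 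The price is that the second-moment computation for the mixture over $\binom{B_n}{C_n}$ sparse patterns involves a hypergeometric overlap rather than your product of $\cosh$'s, but that is the standard Ingster calculation and yields the same rate. If you replace your dense Rademacher construction with this sparse one (and, for the unbounded-eigenfunction case $\theta\ge\frac12$, bound $\norm{u_{n,k}}_\infty$ through $\norm{u_{n,k}}_{\tilde{\h}}$ as you already do), the proof goes through.
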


\begin{rem}
(i) Since $\inf_{\beta>1} \frac{4\theta\beta}{4\theta\beta+1}=\frac{4\theta}{4\theta+1}>\frac{2\theta}{2\theta+1}$ and $1>\frac{2\theta}{2\theta+1}$ for any $\theta>0$, it follows that the separation boundary of MMD is larger than the minimax separation boundary w.r.t.~$\mathcal{P}$ irrespective of the decay rate of the eigenvalues of $\mathcal{T}$.\vspace{1mm}\\
(ii) In the setting of polynomial decay, Theorem~\ref{thm:minimax} generalizes \cite[Theorem 4]{Krishna} in two ways: (a) When the uniform boundedness condition holds, the range of $\theta$ for which the minimax separation rate holds is extended from $\frac{1}{2\beta}<\theta<\frac{1}{2}$ to $\theta>\frac{1}{4\beta}$, and (b) minimax separation is also obtained without assuming the uniform boundedness condition.\vspace{1mm}\\
(iii) The uniform boundedness condition, $\sup_i\norm{\phi_i}_{\infty} < \infty$ does not hold in general. For example,  the Gaussian kernel on $S^{d-1}$, $d\ge 3$, does not satisfy the uniform boundedness condition \citep[Theorem 5]{unibound}, while the Gaussian kernel on $\mathbb{R}^d$ for any $d$ satisfies the uniform boundedness condition~\citep{Steinwart2006AnED}. 

In this paper, we provide results both with and without the uniform boundedness condition to understand its impact on the behavior of the test. Such a condition has also been used in the analysis of the impact of regularization in kernel learning (see \citealt[p. 531]{Mendelson-10}).
\end{rem}

\section{Spectral regularized MMD test}\label{Sec:spec}
In this section, we propose a spectral regularized version of the MMD test and show it to be minimax optimal w.r.t.~$\mathcal{P}$. The proposed test statistic is based on the \emph{spectral regularized discrepancy}, which is defined as
\begin{equation}\eta_{\lambda}(P,P_0) : = \inner{\T\gl(\T)u}{u}_{\Lp},\label{Eq:spec-reg}\end{equation}
where $u=\frac{dP}{dP_0}-1$, $g_\lambda:(0,\infty)\rightarrow (0,\infty)$ is a \emph{spectral regularizer} that satisfies  $\lim_{\lambda\rightarrow 0} xg_\lambda(x)\asymp 1$ (more concrete assumptions on $\gl$ will be introduced later), and 
$$g_{\lambda}(\B) : = \sum_{i\geq 1} g_{\lambda}(\tau_i) (\psi_i \otimes_H \psi_i) + g_{\lambda}(0)\left(\Id - \sum_{i\geq 1} \psi_i \otimes_H \psi_i\right),$$
with $\mathcal{B}$ being any compact, self-adjoint operator defined on a separable Hilbert space, $H$. Here $(\tau_i,\psi_i)_i$ are the eigenvalues and eigenfunctions of $\B$ which enjoys the spectral representation, $\B= \sum_{i}\tau_i \psi_i \otimes_H \psi_i$. The well known \emph{Tikhonov regularizer}, $(\mathcal{B}+\lambda \Id)^{-1}$, which is used in \cite{Krishna}, is obtained as a special case by choosing $g_\lambda(x)=(x+\lambda)^{-1}$.

The key idea in proposing $\eta_\lambda$ is based on the intuition that $\T g_\lambda(\T)\approx \Id$ for sufficiently small $\lambda$ so that $\eta_\lambda(P,P_0)\approx \Vert u\Vert^2_{L^2(P_0)}$, and therefore does not suffer from the limitation of $D^2_{\textrm{MMD}}(P,P_0)$ as aforementioned in Section~\ref{Sec:non-optimal} (see Lemma \ref{lemma: bounds for eta}). 

Using $\eta_\lambda$, in the following, we present details about the construction of the test statistic and the test. First, we provide an alternate representation for $\eta_\lambda$ which is useful to construct the test statistic. Define the \emph{covariance operator},
\begin{align}
 \Sigma_{0}:=\Sigma_{P_0}&=\int_{\X} (K(\cdot,x)-\mu_{P_0}) \htens (K(\cdot,x)-\mu_{P_0})\, dP_0(x)\nonumber\\
 &=\frac{1}{2} \int_{\X \times \X}(K(\cdot,x)-K(\cdot,y))\htens(K(\cdot,x)-K(\cdot,y))\,d\PQ(x)\,d\PQ(y),\nonumber
\end{align}
 which is a positive, self-adjoint, trace-class operator. It can be shown \citep[Proposition C.2]{kpca} that $\Sigma_{0}=\id^*\id:\mathscr{H}\rightarrow\mathscr{H}$. Using this representation in \eqref{Eq:spec-reg} yields
\begin{align}
\eta_\lambda(P,P_0)&=\langle \T g_\lambda(\T)u,u\rangle_{\Lp}=\langle \id\id^*g_\lambda(\id\id^*)u,u\rangle_{\Lp}\stackrel{(\dagger)}{=}\langle \id g_\lambda(\id^*\id)\id^*u,u\rangle_{\Lp}\nonumber\\
&=\langle g_\lambda(\Sigma_{0})\id^*u,\id^*u\rangle_\mathscr{H}=\langle g_\lambda(\Sigma_{0}) (\mu_P-\mu_{P_0}),\mu_P-\mu_{P_0}\rangle_\mathscr{H}\nonumber\\
&=\norm{\gSL(\mu_P-\mu_{P_0})}_{\h}^2,\label{eq:cov-rep}
\end{align}
where $(\dagger)$ follows from (\cite{twosampletest}, Lemma A.8(i) by replacing $\Sigma_{PQ}$ by $\Sigma_0$) that $\T\gl(\T) = \id \gl(\Sigma_{0}) \id^*$. Throughout the paper, we assume that $\gl$ satisfies the following:
\begin{align*}
    &(A_1) \quad \sup_{x \in \Gamma}|x \gl(x)| \leq C_1, &&(A_2) \quad \sup_{x \in \Gamma}|\lambda\gl(x)| \leq C_2, \\ 
    &(A_3) \quad \sup_{\{x\in \Gamma: x\gl(x)<B_3\}} |B_3-x\gl(x)|x^{2\varphi} \leq C_3 \lambda^{2\varphi},
    & &(A_4) \quad \inf_{x \in \Gamma} \gl(x)(x+\lambda) \geq C_4,
\end{align*}
where $\Gamma : = [0,\K]$, $\varphi \in (0,\xi]$ and the constant $\xi$ is called the \emph{qualification} of $\gl$. $C_1$, $C_2$, $C_3$, $B_3$ and $C_4$ are finite positive constants (all independent of $\lambda>0$). Note that these assumptions are quite standard in the inverse problem literature (see e.g., \citealp{Engl.et.al}) and spectral regularized kernel ridge regression \citep{BAUER200752}, except for $(A_3)$, which is replaced by a stronger version---the stronger version of $(A_3)$ takes supremum over whole $\Gamma$. Recently, however, in a two-sample testing scenario, \cite[Section 4.2]{twosampletest} use $(A_3)$. 
The less restrictive assumption $(A_3)$ implies that higher qualifications are possible for the same function $\gl$ in the testing problem compared to the known qualifications in the literature of inverse problems and spectral regularized kernel ridge regression. 
For instance, consider the function $\gl(x)=\frac{1}{x+\lambda}$ corresponding to Tikhonov regularization. In this case, the stronger condition used in literature $\sup_{x\in \Gamma} |1-x\gl(x)|x^{2\varphi} \leq C_3 \lambda^{2\varphi}$ is satisfied only for $\varphi\in(0,\frac{1}{2}]$. However, $(A_3)$ holds for any $\varphi>0$, indicating infinite qualification with no saturation at $\varphi=\frac{1}{2}$ with $B_3 = \frac{1}{2}$ and $C_3=1$.

Define $\Sigma_{0,\lambda} := \Sigma_{0}+\lambda \Id$, 
$$\Nol := \text{Tr}(\SgL\Sigma_{0}\SgL),\,\,\,\text{and}\,\,\,
\Ntl := \norm{\SgL\Sigma_{0}\SgL}_{\hs},$$
which capture the intrinsic dimensionality (or degrees of freedom) of $\h$, with $\Nol$ being quite heavily used in the analysis of kernel ridge regression (e.g., \citealt{Caponnetto-07}). Based on these preliminaries, in the following, we present an Oracle goodness-of-fit test.
\subsection{Oracle Test}
Using the samples $(X_i)_{i=1}^n$, a $U$-statistic estimator of $\eta_\lambda$ defined in \eqref{eq:cov-rep} can be written as
\begin{equation}
\hat{\eta}_{\lambda}= \frac{1}{n(n-1)}\sum_{i\neq j}\inner{\gSL(K(\cdot,X_i)-\mu_0)}{\gSL(K(\cdot,X_j)-\mu_0)}_{\h},    \nonumber
\end{equation}
which when $\mu_0=0$ and $g_\lambda(x)=(x+\lambda)^{-1}$ reduces to the moderated MMD statistic proposed in \cite{Krishna}. The following result provides an $\alpha$-level test based on $\hat{\eta}_\lambda$.
\begin{thm}[Critical region--Oracle]\label{thm:typI-oracle}
Let $n \geq 2$. Suppose $(A_0)$--$(A_2)$ hold. Then for any $\alpha >0$ and any $\lambda>0$,
$$P_{H_0}\{\hat{\eta}_{\lambda} \geq \gamma\} \leq \alpha,$$
where $\gamma = \frac{2\Cs\Ntl}{n\sqrt{\alpha}}.$
\end{thm}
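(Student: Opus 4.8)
The plan is to control $P_{H_0}\{\hat\eta_\lambda \ge \gamma\}$ via Markov's inequality applied to the first moment of the $U$-statistic $\hat\eta_\lambda$ under $H_0$. First I would observe that under $H_0$ we have $P=P_0$, hence $u = dP/dP_0 - 1 = 0$, so $\mu_P = \mu_{P_0} = \mu_0$ and consequently $\eta_\lambda(P_0,P_0) = \norm{\gSL(\mu_0-\mu_0)}_\h^2 = 0$. Since $\hat\eta_\lambda$ is a $U$-statistic whose kernel has expectation $\eta_\lambda(P,P_0)$, it follows that $\E_{H_0}[\hat\eta_\lambda] = 0$. But $\hat\eta_\lambda$ is not a.s.\ nonnegative (the $U$-statistic is unbiased but not a V-statistic), so Markov cannot be applied directly; instead I would pass to $\E_{H_0}|\hat\eta_\lambda|$ and bound it by $\sqrt{\E_{H_0}[\hat\eta_\lambda^2]} = \sqrt{\mathrm{Var}_{H_0}(\hat\eta_\lambda)}$ via Jensen. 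Then Markov's inequality gives
$$P_{H_0}\{\hat\eta_\lambda \ge \gamma\} \le P_{H_0}\{|\hat\eta_\lambda| \ge \gamma\} \le \frac{\sqrt{\mathrm{Var}_{H_0}(\hat\eta_\lambda)}}{\gamma},$$
so it remains to show $\mathrm{Var}_{H_0}(\hat\eta_\lambda) \le \left(\frac{2(C_1+C_2)\Ntl}{n}\right)^2$, or at least that the right order holds so that $\gamma = \frac{2(C_1+C_2)\Ntl}{n\sqrt\alpha}$ makes the bound $\le \alpha$.

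Next I would compute (or bound) the variance of the $U$-statistic. Writing $\hat\eta_\lambda = \frac{1}{n(n-1)}\sum_{i\ne j} h(X_i,X_j)$ with symmetric kernel $h(x,y) = \inner{\gSL(K(\cdot,x)-\mu_0)}{\gSL(K(\cdot,y)-\mu_0)}_\h$, the standard Hoeffding decomposition gives $\mathrm{Var}(\hat\eta_\lambda) = \frac{2}{n(n-1)}\big((2n-4)\zeta_1 + \zeta_2\big)$ where $\zeta_1 = \mathrm{Var}(\E[h(X_1,X_2)\mid X_1])$ and $\zeta_2 = \mathrm{Var}(h(X_1,X_2))$. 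Under $H_0$, by symmetry $\E[h(X_1,X_2)\mid X_1] = \inner{\gSL(K(\cdot,X_1)-\mu_0)}{\gSL \E_{P_0}(K(\cdot,X_2)-\mu_0)}_\h = 0$ since $\E_{P_0}K(\cdot,X) = \mu_0$; hence $\zeta_1 = 0$ and only the degenerate term survives: $\mathrm{Var}_{H_0}(\hat\eta_\lambda) = \frac{2}{n(n-1)}\zeta_2 = \frac{2}{n(n-1)}\E_{H_0}[h(X_1,X_2)^2]$. Now
$$\E_{H_0}[h(X_1,X_2)^2] = \E_{H_0}\inner{g_\lambda(\Sigma_0)(K(\cdot,X_1)-\mu_0)}{K(\cdot,X_2)-\mu_0}_\h^2 = \E_{H_0}\inner{\xi_1 \htens \xi_1}{g_\lambda(\Sigma_0)(\xi_2\htens\xi_2)g_\lambda(\Sigma_0)}$$
where $\xi_i := K(\cdot,X_i)-\mu_0$; taking the expectation over $X_2$ first yields $\E[\xi_2\htens\xi_2] = \Sigma_0$, so this equals $\E_{H_0}\inner{\gSL(\xi_1\htens\xi_1)\gSL}{\Sigma_0}_{\hs}$... more cleanly, $\E_{H_0}[h(X_1,X_2)^2] = \mathrm{Tr}\big((g_\lambda(\Sigma_0)\Sigma_0 g_\lambda(\Sigma_0))^2\big) = \norm{g_\lambda(\Sigma_0)\Sigma_0 g_\lambda(\Sigma_0)}_{\hs}^2$ by using $\E[\xi\htens\xi] = \Sigma_0$ twice. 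Finally I would bound $\norm{g_\lambda(\Sigma_0)\Sigma_0 g_\lambda(\Sigma_0)}_{\hs} = \norm{\SgL^{1/2}\big(\SgL g_\lambda(\Sigma_0)\Sigma_0 g_\lambda(\Sigma_0)\SgL\big)\SgL^{1/2}\cdots}$ — more directly, write $g_\lambda(\Sigma_0)\Sigma_0 g_\lambda(\Sigma_0) = \SgL^{-1/2}\big(\SgL^{1/2} g_\lambda(\Sigma_0)\SLh\cdots\big)$; the clean route is: since $\Sigma_0$ and $g_\lambda(\Sigma_0)$, $\Sigma_{0,\lambda}$ all commute, $g_\lambda(\Sigma_0)\Sigma_0 g_\lambda(\Sigma_0) = \SgL\Sigma_0\SgL \cdot \big(g_\lambda(\Sigma_0)\Sigma_{0,\lambda}\big)^2 \cdot \big(\text{something with }\lambda g_\lambda\big)$... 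I would factor $g_\lambda(\Sigma_0) = g_\lambda(\Sigma_0)\Sigma_{0,\lambda}\cdot \Sigma_{0,\lambda}^{-1}$ and use $(A_1)$ on $x g_\lambda(x)$ and $(A_2)$ on $\lambda g_\lambda(x)$, giving $\norm{\Sigma_{0,\lambda}g_\lambda(\Sigma_0)}_{\op} \le \norm{\Sigma_0 g_\lambda(\Sigma_0)}_{\op} + \lambda\norm{g_\lambda(\Sigma_0)}_{\op} \le C_1 + C_2$ since $\lambda\norm{g_\lambda(\Sigma_0)}_{\op} = \sup_x |\lambda g_\lambda(x)| \le C_2$ and $\norm{\Sigma_0 g_\lambda(\Sigma_0)}_{\op} \le C_1$. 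Then
$$\norm{g_\lambda(\Sigma_0)\Sigma_0 g_\lambda(\Sigma_0)}_{\hs} = \norm{\big(\Sigma_{0,\lambda}g_\lambda(\Sigma_0)\big)\SgL\Sigma_0\SgL\big(\Sigma_{0,\lambda}g_\lambda(\Sigma_0)\big)}_{\hs} \le \norm{\Sigma_{0,\lambda}g_\lambda(\Sigma_0)}_{\op}^2 \,\norm{\SgL\Sigma_0\SgL}_{\hs} \le (C_1+C_2)^2 \Ntl.$$
Combining, $\mathrm{Var}_{H_0}(\hat\eta_\lambda) \le \frac{2}{n(n-1)}(C_1+C_2)^4 \Ntl^2 \le \frac{4(C_1+C_2)^4\Ntl^2}{n^2}$ for $n\ge 2$, so $\sqrt{\mathrm{Var}_{H_0}(\hat\eta_\lambda)} \le \frac{2(C_1+C_2)^2\Ntl}{n}$.

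Wait — the threshold in the statement is $\gamma = \frac{2(C_1+C_2)\Ntl}{n\sqrt\alpha}$, with $(C_1+C_2)$ to the first power, not squared; so the intended bound is $\sqrt{\mathrm{Var}_{H_0}(\hat\eta_\lambda)} \le \frac{2(C_1+C_2)\Ntl}{n}$, which means the operator-norm factor should enter only once, i.e.\ $\norm{g_\lambda(\Sigma_0)\Sigma_0 g_\lambda(\Sigma_0)}_{\hs} \le (C_1+C_2)\Ntl$. The sharper argument: write $g_\lambda(\Sigma_0)\Sigma_0 g_\lambda(\Sigma_0) = \big(\Sigma_{0,\lambda} g_\lambda(\Sigma_0)\big) \cdot \SgL^{1/2}\Sigma_0\SgL^{1/2} \cdot \SgL^{1/2}\Sigma_0^{?}\cdots$ — the decomposition I actually want splits one $\Sigma_{0,\lambda} g_\lambda(\Sigma_0)$ factor off and absorbs the other into $\SgL\Sigma_0\SgL$ while noting $\norm{\SgL\Sigma_0\SgL\, \Sigma_{0,\lambda}g_\lambda(\Sigma_0)}_{\hs} \le \norm{\SgL\Sigma_0^{1/2}}_{\op}\norm{\Sigma_0^{1/2}\SgL}_{\op}\cdots$; at worst one gets the squared constant, and the clean first-power bound follows from $\norm{g_\lambda(\Sigma_0)\Sigma_0 g_\lambda(\Sigma_0)}_{\hs} \le \norm{\Sigma_{0,\lambda}^{1/2}g_\lambda(\Sigma_0)\Sigma_{0,\lambda}^{1/2}}_{\op}\norm{\SgL\Sigma_0\SgL}_{\hs}$ together with $\norm{\Sigma_{0,\lambda}^{1/2}g_\lambda(\Sigma_0)\Sigma_{0,\lambda}^{1/2}}_{\op} = \sup_x g_\lambda(x)(x+\lambda) \le C_1 + C_2$ (by $(A_1),(A_2)$). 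This last step — getting exactly the constant $C_1+C_2$ to the first power — is the main technical point to get right; everything else (degeneracy under $H_0$, the $U$-statistic variance formula, Markov/Jensen) is routine. With $\gamma = \frac{2(C_1+C_2)\Ntl}{n\sqrt\alpha}$ the Markov bound is then $\le \sqrt\alpha \cdot \frac{\sqrt{\mathrm{Var}}}{2(C_1+C_2)\Ntl/n} \le \sqrt\alpha \le \alpha$ — adjusting the numerical constant as needed so that the final bound is exactly $\alpha$.
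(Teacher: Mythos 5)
Your second-moment computation is essentially the paper's argument: under $H_0$ the $U$-statistic is degenerate ($\zeta_1=0$), and the bound $\E_{H_0}[\hat\eta_\lambda^2]\le \frac{4}{n^2}(C_1+C_2)^2\Ntlsq$ follows from $\sup_x g_\lambda(x)(x+\lambda)\le C_1+C_2$ (your $\norm{\Sigma_{0,\lambda}^{1/2}g_\lambda(\Sigma_0)\Sigma_{0,\lambda}^{1/2}}_{\op}$ bound is exactly the paper's $\norm{\B}^2_{\op}\le C_1+C_2$ with $\B=\gSL\SL^{1/2}$) together with the remaining $\Ntl$ factor. Two remarks on the details: the intermediate identity should read $\E_{H_0}[h(X_1,X_2)^2]=\mathrm{Tr}\bigl((g_\lambda(\Sigma_0)\Sigma_0)^2\bigr)=\norm{g_\lambda^{1/2}(\Sigma_0)\,\Sigma_0\, g_\lambda^{1/2}(\Sigma_0)}^2_{\hs}$, not $\norm{g_\lambda(\Sigma_0)\Sigma_0 g_\lambda(\Sigma_0)}^2_{\hs}$ (with the full $g_\lambda$ on both sides your factorization would leave an extra $\Sigma_{0,\lambda}^{-1}$ of norm $\lambda^{-1}$); for the correct square-root version your factorization and the first-power constant $(C_1+C_2)\Ntl$ go through.

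The genuine error is in the last step. Having bounded $\sqrt{\mathrm{Var}_{H_0}(\hat\eta_\lambda)}\le \frac{2(C_1+C_2)\Ntl}{n}$, you apply Markov to $\E_{H_0}|\hat\eta_\lambda|\le\sqrt{\E_{H_0}[\hat\eta_\lambda^2]}$ and obtain the bound $\sqrt{\alpha}$, then assert $\sqrt{\alpha}\le\alpha$. This is false for every $\alpha\in(0,1)$, and no adjustment of numerical constants can repair it: a first-absolute-moment Markov bound with a threshold scaling as $\alpha^{-1/2}$ can never produce a tail probability of order $\alpha$. The correct step — and the one the paper uses — is Chebyshev applied to the second moment,
\begin{equation*}
P_{H_0}\{\hat\eta_\lambda\ge\gamma\}\le P_{H_0}\{|\hat\eta_\lambda|\ge\gamma\}\le\frac{\E_{H_0}[\hat\eta_\lambda^2]}{\gamma^2}\le\frac{4(C_1+C_2)^2\Ntlsq/n^2}{4(C_1+C_2)^2\Ntlsq/(n^2\alpha)}=\alpha,
\end{equation*}
which works with exactly the variance bound you already derived and the stated $\gamma=\frac{2(C_1+C_2)\Ntl}{n\sqrt{\alpha}}$.
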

Unfortunately, the test is not practical as the critical value, $\gamma$, and the test statistic depend on the eigenvalues and eigenfunctions of $\Sigma_0$, which are not easy to compute for many $(K, P_0)$ pairs. Therefore, we call the above test the \emph{Oracle test}. The following result analyzes the power of the Oracle test and presents sufficient conditions on the separation boundary to achieve the desired power.

\begin{thm}[Separation boundary--Oracle]\label{thm:typII-oracle}
Suppose $(A_0)$--$(A_3)$. Let $$\sup_{P \in \PP} \norm{\T^{-\theta}u}_{\Lp} < \infty,$$ $\norm{\Sigma_{0}}_{\op} \geq\lambda=d_{\theta}\Delta_{n}^{\frac{1}{2\Tilde{\theta}}}$, $d_{\theta}>0$, where $d_\theta$ is a constant that depends on $\theta$. For any $0<\delta\le 1$, if $\Delta_{n}$ satisfies 
$$\frac{\Delta_{n}^{\frac{2\Tilde{\theta}+1}{2\tilde{\theta}}}}{\mathcal{N}_{2}\left(d_{\theta}\Delta_{n}^{\frac{1}{2\tilde{\theta}}}\right)} \gtrsim \frac{d_{\theta}^{-1}\delta^{-2}}{ n^{2}},\qquad\,
\frac{\Delta_n}{\mathcal{N}_{2}\left(d_{\theta}\Delta_{n}^{\frac{1}{2\tilde{\theta}}}\right)} \gtrsim\frac{(\alpha^{-1/2}+\delta^{-1})}{n}.$$
then 
\begin{equation}\inf_{P \in \PP} P_{H_1}\left\{\hat{\eta}_{\lambda}
\geq \gamma \right\} \geq 1-\delta,\label{Eq:type-2-oracle}\end{equation}
where $\gamma = \frac{2\Cs\Ntl}{n\sqrt{\alpha}}$, and $\Tilde{\theta}=\min(\theta,\xi)$. Furthermore, suppose  $C:=\sup_{i}\norm{\phi_i}_{\infty} < \infty$. Then \eqref{Eq:type-2-oracle} holds when the above  conditions on $\Delta_{n}$ are replaced by
$$\frac{\Delta_{n}}{\mathcal{N}_{1}\left(d_{\theta}\Delta_{n}^{\frac{1}{2\tilde{\theta}}}\right)} \gtrsim \frac{\delta^{-2}}{n^{2}},\qquad\frac{\Delta_{n}}{\mathcal{N}_{2}\left(d_{\theta}\Delta_{n}^{\frac{1}{2\tilde{\theta}}}\right)} \gtrsim \frac{(\alpha^{-1/2}+\delta^{-1})}{ n}.$$

\end{thm}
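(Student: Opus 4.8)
The plan is to bound the Type-II error by showing that under $H_1$, the test statistic $\hat\eta_\lambda$ concentrates around its mean $\eta_\lambda(P,P_0)$, and that this mean is comfortably larger than the threshold $\gamma$ whenever the separation $\Delta_n$ is large enough. First I would control the population quantity: by Lemma~\ref{lemma: bounds for eta} (the ``$\T g_\lambda(\T)\approx\Id$'' estimate coming from $(A_1)$--$(A_4)$ and the source condition $\sup_P\norm{\T^{-\theta}u}_{\Lp}<\infty$), one gets a two-sided bound $\eta_\lambda(P,P_0)\gtrsim \Delta_n - c\,\lambda^{2\tilde\theta}\norm{\T^{-\theta}u}^2_{\Lp}$, so choosing $\lambda=d_\theta\Delta_n^{1/(2\tilde\theta)}$ with $d_\theta$ small makes $\lambda^{2\tilde\theta}\asymp\Delta_n$ with a controllable constant, giving $\eta_\lambda(P,P_0)\gtrsim\Delta_n$ uniformly over $P\in\PP$. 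The constraint $\norm{\Sigma_0}_{\op}\ge\lambda$ is there to keep $\lambda$ in the admissible range $\Gamma=[0,\K]$ where $(A_1)$--$(A_3)$ apply.

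Next I would handle the stochastic fluctuation. Write $\hat\eta_\lambda=\frac{1}{n(n-1)}\sum_{i\ne j}\langle h(X_i),h(X_j)\rangle_\h$ with $h(x)=\gSL(K(\cdot,x)-\mu_0)$, a degenerate-plus-linear $U$-statistic after centering. Decompose $h(X_i)=\E_P h + (h(X_i)-\E_P h)$; the ``linear'' part contributes a term of order $\norm{\E_P h}^2=\eta_\lambda(P,P_0)$ plus a mean-zero average whose variance I would bound by $\frac{1}{n}\norm{\E_P h}^2\cdot(\text{sup of }\norm{h})$, and the genuinely degenerate part has variance controlled by $\Ntlsq/n^2$ together with an operator-norm term, using $\norm{\gSL\Sigma_0\gSL}_{\hs}=\Ntl$ and $(A_1)$, $(A_2)$ to bound $\norm{h(x)}_\h\lesssim\sqrt{\Nol}$ or, more sharply, $\lesssim \lambda^{-1/2}$ via $(A_2)$ and $\sup_x K(x,x)\le\K$. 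Chebyshev's inequality then gives $\hat\eta_\lambda\ge \eta_\lambda(P,P_0) - C\delta^{-1}(\ldots)$ with probability at least $1-\delta$; the first moment condition $\frac{\Delta_n^{(2\tilde\theta+1)/(2\tilde\theta)}}{\Ntl(\lambda)}\gtrsim \frac{d_\theta^{-1}\delta^{-2}}{n^2}$ is exactly what forces the degenerate-variance correction to be $\ll\Delta_n$, and the second condition $\frac{\Delta_n}{\Ntl(\lambda)}\gtrsim\frac{\alpha^{-1/2}+\delta^{-1}}{n}$ forces both the threshold $\gamma=\frac{2\Cs\Ntl(\lambda)}{n\sqrt\alpha}$ and the linear-term fluctuation to be $\ll\Delta_n$. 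Combining, $\hat\eta_\lambda\ge \eta_\lambda(P,P_0)-\frac{1}{2}\eta_\lambda(P,P_0)\ge\gamma$ on the good event, which is \eqref{Eq:type-2-oracle}.

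For the uniform-boundedness refinement, the only change is in the variance bookkeeping: when $C:=\sup_i\norm{\phi_i}_\infty<\infty$ one can replace the crude bound $\norm{h(x)}_\h^2\lesssim\Nol$ (needed for the degenerate part) by a bound in terms of $\Ntl$, because the relevant fourth-moment/operator quantities governing the $U$-statistic variance get expressed through $\sum_i g_\lambda(\tau_i)\tau_i\,\phi_i^2$-type sums that are dominated by $C^2\,\mathrm{Tr}(\Sigma_0 g_\lambda(\Sigma_0))$-style expressions, which after a second application of the source condition collapse to $\Ntl$ rather than $\Nol$; this is why the first condition weakens from involving $\Ntl$ with an extra $\Delta_n^{1/(2\tilde\theta)}$ factor to the cleaner $\frac{\Delta_n}{\Ntl(\lambda)}\gtrsim\frac{\delta^{-2}}{n^2}$. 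I expect the main obstacle to be precisely this variance computation: getting the degenerate $U$-statistic variance down to the sharp $\Ntlsq/n^2$ scale (as opposed to a $\Nol^2/n^2$ or $\K\Nol/n^2$ scale) requires carefully separating the linear Hájek projection from the degenerate kernel and invoking $(A_3)$ to trade powers of $\lambda$ against powers of $\T^\theta$ applied to $u$; the rest is Chebyshev plus the two scaling inequalities. I would lean on the technical variance lemmas of \cite{twosampletest} (adapted by replacing $\Sigma_{PQ}$ with $\Sigma_0$ and the alternative-space normalization accordingly) to carry out this step.
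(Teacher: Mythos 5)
Your plan matches the paper's proof essentially step for step: the same decomposition of $\hat\eta_\lambda$ into a degenerate $U$-statistic plus a linear term plus $\eta_\lambda$, the same lower bound $\eta_\lambda\gtrsim\Vert u\Vert^2_{L^2(P_0)}\ge\Delta_n$ via Lemma~\ref{lemma: bounds for eta} with $\lambda=d_\theta\Delta_n^{1/(2\tilde\theta)}$, the same variance control via the adapted lemmas of \cite{twosampletest} together with Lemma~\ref{lemma: bound hs and op}, and Chebyshev to finish. One small correction: under uniform boundedness the gain comes from $\Cl$ changing from $\tfrac{2\Ntl}{\lambda}\sup_x\Vert K(\cdot,x)\Vert^2_\h$ to $C^2\Nol$ (so the relevant condition involves $\mathcal{N}_1$, the trace, not $\mathcal{N}_2$ as you wrote), i.e., the $\lambda^{-1}$ factor—not the degrees-of-freedom quantity itself—is what is traded away.
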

The following corollaries to Theorem~\ref{thm:typII-oracle} investigate the separation boundary of the test under the polynomial and exponential decay condition on the eigenvalues
of $\Sigma_{0}$.

\begin{cor} [Polynomial decay--Oracle]\label{coro:poly-oracle}
Suppose $\lambda_i \asymp i^{-\beta}$, $\beta>1$. Then for any $\delta>0$, 
$$\inf_{P \in \PP} P_{H_1}\left\{\hat{\eta}_{\lambda}
\geq \gamma \right\} \geq 1-\delta,$$
when
$$\Delta_{n} =
\left\{
	\begin{array}{ll}		c(\alpha,\delta)n^{\frac{-4\tilde{\theta}\beta}{4\Tilde{\theta}\beta+1}},  &  \ \  \Tilde{\theta}> \frac{1}{2}-\frac{1}{4\beta} \\
		c(\alpha,\delta)n^{-\frac{8\Tilde{\theta}\beta}{4\Tilde{\theta}\beta+2\beta+1}}, & \ \  \Tilde{\theta} \le \frac{1}{2}-\frac{1}{4\beta}
	\end{array}
\right.,$$
with $c(\alpha,\delta)\gtrsim(\alpha^{-1/2}+\delta^{-2})$. Furthermore, if $\sup_{i}\norm{\phi_i}_{\infty} < \infty$, then 
$$\Delta_n = c(\alpha,\delta)n^{\frac{-4\tilde{\theta}\beta}{4\Tilde{\theta}\beta+1}}.$$
\end{cor}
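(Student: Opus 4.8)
The plan is to obtain Corollary~\ref{coro:poly-oracle} as a direct specialization of Theorem~\ref{thm:typII-oracle}: first evaluate the effective dimensions $\Nol$ and $\Ntl$ under the polynomial eigendecay, then substitute $\lambda=d_\theta\Delta_n^{1/(2\tilde{\theta})}$, and finally solve the two resulting scalar inequalities for $\Delta_n$. For the first step I would use that the nonzero eigenvalues of $\Sigma_0=\id^*\id$ coincide with those of $\T=\id\id^*$, so that $\Nol=\sum_i\frac{\lambda_i}{\lambda_i+\lambda}$ and $\Ntlsq=\sum_i\frac{\lambda_i^2}{(\lambda_i+\lambda)^2}$. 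Under $\lambda_i\asymp i^{-\beta}$, splitting each sum at the index $i\asymp\lambda^{-1/\beta}$ (where $\lambda_i\asymp\lambda$) leaves a head of $\asymp\lambda^{-1/\beta}$ terms of order $1$ and a tail bounded by $\lambda^{-1}\sum_{i\gtrsim\lambda^{-1/\beta}}i^{-\beta}\asymp\lambda^{-1/\beta}$ (using $\beta>1$), resp.\ $\lambda^{-2}\sum_{i\gtrsim\lambda^{-1/\beta}}i^{-2\beta}\asymp\lambda^{-1/\beta}$ (using $2\beta>1$); hence $\Nol\asymp\lambda^{-1/\beta}$ and $\Ntl\asymp\lambda^{-1/(2\beta)}$. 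These are the standard degrees-of-freedom estimates from the kernel ridge regression literature (e.g.\ \citealt{Caponnetto-07}).

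Substituting $\lambda=d_\theta\Delta_n^{1/(2\tilde{\theta})}$ gives $\Ntl\asymp\Delta_n^{-1/(4\tilde{\theta}\beta)}$ and $\Nol\asymp\Delta_n^{-1/(2\tilde{\theta}\beta)}$. Plugging into the two conditions of Theorem~\ref{thm:typII-oracle}, the first becomes $\Delta_n^{1+(2\beta+1)/(4\tilde{\theta}\beta)}\gtrsim\delta^{-2}n^{-2}$, i.e.\ $\Delta_n\gtrsim n^{-8\tilde{\theta}\beta/(4\tilde{\theta}\beta+2\beta+1)}$ up to $\alpha,\delta$ factors, while the second becomes $\Delta_n^{1+1/(4\tilde{\theta}\beta)}\gtrsim(\alpha^{-1/2}+\delta^{-1})n^{-1}$, i.e.\ $\Delta_n\gtrsim n^{-4\tilde{\theta}\beta/(4\tilde{\theta}\beta+1)}$. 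A cross-multiplication shows $\frac{8\tilde{\theta}\beta}{4\tilde{\theta}\beta+2\beta+1}\le\frac{4\tilde{\theta}\beta}{4\tilde{\theta}\beta+1}$ iff $4\tilde{\theta}\beta+1\le 2\beta$, i.e.\ iff $\tilde{\theta}\le\frac{1}{2}-\frac{1}{4\beta}$, so the binding condition — and hence the stated piecewise value of $\Delta_n$ — switches exactly at $\tilde{\theta}=\frac{1}{2}-\frac{1}{4\beta}$. Taking $\Delta_n$ to be the larger of the two lower bounds with prefactor $c(\alpha,\delta)\gtrsim\alpha^{-1/2}+\delta^{-2}$ (which dominates the fractional powers of $\alpha^{-1/2}$ and $\delta^{-1}$ arising when the exponents are inverted, since $0<\delta\le1$) makes both inequalities hold. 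Under the uniform boundedness condition the first condition of Theorem~\ref{thm:typII-oracle} is replaced by the one involving $\Nol$ in place of $\Ntl$, which gives $\Delta_n^{1+1/(2\tilde{\theta}\beta)}\gtrsim\delta^{-2}n^{-2}$, i.e.\ $\Delta_n\gtrsim n^{-4\tilde{\theta}\beta/(2\tilde{\theta}\beta+1)}$; since $\frac{4\tilde{\theta}\beta}{2\tilde{\theta}\beta+1}>\frac{4\tilde{\theta}\beta}{4\tilde{\theta}\beta+1}$ for every $\tilde{\theta}>0$, the (unchanged) second condition always dominates, leaving the single rate $n^{-4\tilde{\theta}\beta/(4\tilde{\theta}\beta+1)}$. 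Finally, $\norm{\Sigma_0}_{\op}\ge\lambda$ holds for $n$ large because $\lambda=d_\theta\Delta_n^{1/(2\tilde{\theta})}\to0$, so Theorem~\ref{thm:typII-oracle} is applicable.

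I expect no genuine difficulty here: the only substantive ingredient is the effective-dimension asymptotics of the first step, which is classical. The mildly delicate bookkeeping will be (i) verifying that the single $\Delta_n$ chosen as the maximum of the two lower bounds satisfies both scalar inequalities simultaneously — not merely the comparison of exponents — while keeping the $\alpha,\delta$ dependence under control, and (ii) correctly locating the regime boundary $\tilde{\theta}=\frac{1}{2}-\frac{1}{4\beta}$ from the cross-multiplication.
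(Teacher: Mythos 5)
Your proposal is correct and follows essentially the same route as the paper's proof: bound $\Ntl\lesssim\lambda^{-1/(2\beta)}$ and $\Nol\lesssim\lambda^{-1/\beta}$ under polynomial decay (the paper cites \citealt[Lemma B.9]{kpca} rather than rederiving it), substitute $\lambda=d_\theta\Delta_n^{1/(2\tilde\theta)}$ into the conditions of Theorem~\ref{thm:typII-oracle}, solve for $\Delta_n$, and identify the regime boundary $\tilde\theta=\frac{1}{2}-\frac{1}{4\beta}$ by comparing the two exponents. The resulting rates, the $c(\alpha,\delta)\gtrsim\alpha^{-1/2}+\delta^{-2}$ prefactor, and the uniform-boundedness case all match the paper.
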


\begin{cor}[Exponential decay--Oracle]\label{coro:exp-oracle}
Suppose $\lambda_i \asymp e^{-\tau i}$, $\tau>0$. Then for any $\delta>0$, there exists $k_{\alpha,\delta}$ such that for all $n\geq k_{\alpha,\delta}$, 

$$\inf_{P \in \PP} P_{H_1}\left\{\hat{\eta}_{\lambda}
\geq \gamma \right\} \geq 1-\delta,$$
when
$$\Delta_{n} =
\left\{
	\begin{array}{ll}
		c(\alpha,\delta, \theta)\frac{\sqrt{\log n}}{n}, &  \ \ \Tilde{\theta}> \frac{1}{2} \\
		c(\alpha,\delta,\theta)\left(\frac{\sqrt{\log n}}{n}\right)^{\frac{4\tilde{\theta}}{2\tilde{\theta}+1}}, & \ \ \Tilde{\theta} \le \frac{1}{2}
	\end{array}
\right.,$$
where $c(\alpha,\delta,\theta)\gtrsim \max\left\{\sqrt{\frac{1}{2\tilde{\theta}}},1\right\}(\alpha^{-1/2}+\delta^{-2})$. Furthermore, if $\sup_{i}\norm{\phi_i}_{\infty} < \infty$, then 

$$\Delta_{n} = c(\alpha,\delta,\theta)\frac{\sqrt{\log n}}{n},$$
where $c(\alpha,\delta,\theta) \gtrsim \max\left\{\sqrt{\frac{1}{2\tilde{\theta}}},\frac{1}{2\tilde{\theta}},1\right\}(\alpha^{-1/2}+\delta^{-2})$.
\end{cor}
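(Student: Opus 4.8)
The plan is to deduce Corollary~\ref{coro:exp-oracle} directly from Theorem~\ref{thm:typII-oracle}; the only genuinely new ingredient is an estimate of the intrinsic dimensions $\mathcal{N}_1(\lambda)$ and $\mathcal{N}_2(\lambda)$ under the exponential eigenvalue decay $\lambda_i\asymp e^{-\tau i}$, after which the two conditions of Theorem~\ref{thm:typII-oracle} become explicit inequalities in $\Delta_n$ that can be solved. Since $\Sigma_0=\id^*\id$ and $\T=\id\id^*$ share the same nonzero eigenvalues, the eigenvalues of $\Sigma_0$ are also $\asymp e^{-\tau i}$, and with $\SgL=\Sigma_{0,\lambda}^{-1/2}$ one has $\mathcal{N}_1(\lambda)=\sum_{i\ge1}\frac{\lambda_i}{\lambda_i+\lambda}$ and $\mathcal{N}_2^2(\lambda)=\sum_{i\ge1}\frac{\lambda_i^2}{(\lambda_i+\lambda)^2}$ (recall also $\tilde{\theta}=\min(\theta,\xi)$ from Theorem~\ref{thm:typII-oracle}).

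\emph{Step 1 (effective dimensions).} For $\lambda\le\norm{\Sigma_0}_{\op}$ I would split each of these sums at the index $i^\star\asymp\tau^{-1}\log(1/\lambda)$: for $i\le i^\star$ one has $\lambda_i\gtrsim\lambda$, so each summand lies in a fixed interval $[c,1]$ with $c>0$; for $i>i^\star$ the summands decay geometrically (with ratios $e^{-\tau}$ and $e^{-2\tau}$, respectively) and contribute only $O(1)$. This yields $\mathcal{N}_1(\lambda)\asymp\mathcal{N}_2^2(\lambda)\asymp\tau^{-1}\log(1/\lambda)$, hence $\mathcal{N}_2(\lambda)\asymp\sqrt{\tau^{-1}\log(1/\lambda)}$, for all sufficiently small $\lambda$.

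\emph{Step 2 (substitution and solving).} Setting $\lambda=d_\theta\Delta_n^{1/(2\tilde{\theta})}$ as prescribed by Theorem~\ref{thm:typII-oracle} gives $\log(1/\lambda)\asymp\tfrac{1}{2\tilde{\theta}}\log(1/\Delta_n)$, so $\mathcal{N}_2(\lambda)\asymp\sqrt{\tfrac{1}{2\tilde{\theta}}\log(1/\Delta_n)}$ and $\mathcal{N}_1(\lambda)\asymp\tfrac{1}{2\tilde{\theta}}\log(1/\Delta_n)$. Making the self-consistent ansatz that $\Delta_n$ has the claimed order — a negative power of $n$ times a power of $\log n$ — so that $\log(1/\Delta_n)\asymp\log n$, the two displayed conditions of Theorem~\ref{thm:typII-oracle} reduce, up to $\tilde{\theta}$-dependent constants, to $\Delta_n^{(2\tilde{\theta}+1)/(2\tilde{\theta})}\gtrsim\delta^{-2}\sqrt{\log n}\,n^{-2}$ and $\Delta_n\gtrsim(\alpha^{-1/2}+\delta^{-1})\sqrt{\log n}\,n^{-1}$. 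Since $\tfrac{4\tilde{\theta}}{2\tilde{\theta}+1}>1$ exactly when $\tilde{\theta}>\tfrac12$, for $\tilde{\theta}>\tfrac12$ the second inequality is binding and is satisfied by $\Delta_n\asymp\sqrt{\log n}/n$, while for $\tilde{\theta}\le\tfrac12$ the first is binding and is satisfied by $\Delta_n\asymp(\sqrt{\log n}/n)^{4\tilde{\theta}/(2\tilde{\theta}+1)}$ — indeed, by the identity $\tfrac{2\tilde{\theta}+1}{2\tilde{\theta}}\cdot\tfrac{4\tilde{\theta}}{2\tilde{\theta}+1}=2$ this choice makes the left side of the first inequality equal to $\log n/n^2$, comfortably above the required $\sqrt{\log n}/n^2$; in each regime one then checks that the other inequality holds and that $\log(1/\Delta_n)\asymp\log n$, closing the ansatz. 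For the uniformly bounded case the first condition of Theorem~\ref{thm:typII-oracle} becomes $\Delta_n\gtrsim\delta^{-2}\mathcal{N}_1(\lambda)\,n^{-2}\asymp\tfrac{1}{2\tilde{\theta}}\delta^{-2}\log n\,n^{-2}$, which is dominated by $\Delta_n\gtrsim(\alpha^{-1/2}+\delta^{-1})\mathcal{N}_2(\lambda)\,n^{-1}$ for every $\tilde{\theta}>0$, so $\Delta_n\asymp\sqrt{\log n}/n$ in all regimes; carrying the $\tilde{\theta}$-dependence through $\mathcal{N}_1,\mathcal{N}_2$ (and through the choice of $d_\theta$ and the constraint $\lambda\le\norm{\Sigma_0}_{\op}$) yields the stated constants $c(\alpha,\delta,\theta)$.

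\emph{Main obstacle.} The delicate point is the bootstrapping argument in Step 2: one must first posit that $\Delta_n$ has the claimed order so as to conclude $\log(1/\Delta_n)\asymp\log n$, and then verify a posteriori that this order indeed satisfies both conditions of Theorem~\ref{thm:typII-oracle}. Making this rigorous forces $n$ to exceed a threshold $k_{\alpha,\delta}$ — which is also what guarantees $\lambda=d_\theta\Delta_n^{1/(2\tilde{\theta})}\le\norm{\Sigma_0}_{\op}$, as used in Step 1. The rest — the geometric-tail estimates of Step 1 and the bookkeeping of how the $\sqrt{1/(2\tilde{\theta})}$ and $1/(2\tilde{\theta})$ factors enter $c(\alpha,\delta,\theta)$ — is routine.
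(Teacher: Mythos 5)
Your proposal is correct and follows essentially the same route as the paper: bound $\mathcal{N}_1(\lambda)\lesssim\log(1/\lambda)$ and $\mathcal{N}_2(\lambda)\lesssim\sqrt{\log(1/\lambda)}$ under exponential decay, substitute $\lambda=d_\theta\Delta_n^{1/(2\tilde{\theta})}$ into the conditions of Theorem~\ref{thm:typII-oracle}, use $\log(1/\Delta_n)\asymp\log n$, and identify which of the two inequalities is binding according to whether $\tilde{\theta}>\frac{1}{2}$. The only cosmetic difference is that you derive the effective-dimension estimates directly by splitting the sum at $i^\star\asymp\tau^{-1}\log(1/\lambda)$, whereas the paper cites an existing lemma for the bound $\mathcal{N}_2(\lambda)\lesssim\sqrt{\log(1/\lambda)}$.
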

\begin{rem}
    (i) Observe that larger qualification $\xi$ (defined in Assumption $(A_3)$ in Section 4) corresponds to a smaller separation boundary. Therefore, it is important to work with regularizers with infinite qualification, such as Tikhonov and Showalter. It has to be 
    noted that the Tikhonov regularizer has infinite qualification as per $(A_3)$ but has a qualification of $\frac{1}{2}$ w.r.t.~the stronger version of $(A_3)$.    \vspace{1mm}\\
    (ii) Suppose $g_\lambda$ has infinite qualification, $\xi=\infty$, then $\tilde{\theta}=\theta$. Comparing Corollary~\ref{coro:poly-oracle} (\emph{resp.} Corollary~\ref{coro:exp-oracle}) and Theorem~\ref{thm:minimax} shows that the spectral regularized test based on $\hat{\eta}_{\lambda}$ is minimax optimal w.r.t.~$\mathcal{P}$ in the ranges of $\theta$ as given in Theorem~\ref{thm:minimax} if the eigenvalues of $\T$ decay polynomially (\emph{resp.} exponentially). 

    Outside these ranges of $\theta$, the optimality of the test remains an open question since we do not have a minimax separation covering these ranges of $\theta$.\vspace{1mm}\\
    (iii) Corollary~\ref{coro:poly-oracle} recovers the minimax separation rate in \cite{Krishna} under the uniform boundedness condition but without assuming $\mu_0=0$. Furthermore, it also presents the separation rate for the regularized MMD test without assuming both the uniform boundedness condition and $\mu_0=0$, and shows a phase transition in the separation rate depending on the value of $\tilde{\theta}$.
\label{rem:oracle}
\end{rem}

\subsection{Two-sample statistic} \label{subsec:test-statistic}
The Oracle test statistic requires the knowledge of $\mu_0$ and $\Sigma_0$ for it to be computable. Though $P_0$ is known, $\Sigma_0$ and $\mu_0$ are not known in closed form in general for many $(K,P_0)$ pairs. To address this issue, in this section, we assume that $P_0$ is samplable, i.e., a set of i.i.d.~samples from $P_0$ are available or can be generated. To this end, let us say $m+s$ i.i.d.~samples are available from $P_0$ of which $(Y^0_i)_{i=1}^s \stackrel{i.i.d}{\sim} P_0$ are used to estimate $\Sigma_0$ and $(X^0_i)_{i=1}^m \stackrel{i.i.d}{\sim} P_0$ are used to estimate $\mu_0$, with $(Y^0_i)_{i=1}^s\stackrel{i.i.d.}{=} (X^0_i)_{i=1}^m$. Note that we do not use all $m+s$ samples to estimate both $\mu_0$ and $\Sigma_0$. Instead, we do sample splitting so that the estimators of $\Sigma_0$ and $\mu_0$ are decoupled, which will turn out to be critical for the analysis. Based on this, $\eta_\lambda$ can be estimated as a two-sample $U$-statistic \citep{Hoeffding} as
\begin{equation}
\stat:=\frac{1}{n(n-1)}\frac{1}{m(m-1)}\sum_{1\leq i\neq j \leq n}\sum_{1\leq i'\neq j' \leq N} h(X_i,X_j,X^0_{i'},X^0_{j'}),  \label{eq:twosamplestat}  
\end{equation}
where 
$$h(X_i,X_j,X^0_{i'},X^0_{j'}):= \inner{\gShh(K(\cdot,X_i)-K(\cdot,X^0_{i'}))}{\gShh(K(\cdot,X_j)-K(\cdot,X^0_{j'}))}_{\h},$$
and
\begin{equation*}
\hat{\Sigma}_0 :=\frac{1}{2s(s-1)}\sum_{i\neq j}^{s} (K(\cdot,Y_i^0)-K(\cdot,Y_j^0)) \htens (K(\cdot,Y_i^0)-K(\cdot,Y_j^0)),
\end{equation*}
is a one-sample $U$-statistic estimator of $\Sigma_{0}$ based on $(Y^0_i)_{i=1}^s$. Note that $\hat{\eta}^{TS}_\lambda$ is not exactly a $U$-statistic since it involves $\hat{\Sigma}_{0}$, but conditioned on $(Y^0_i)_{i=1}^s$, one can see that it is exactly a two-sample $U$-statistic. By expanding the inner product in $h$ and writing \eqref{eq:twosamplestat} as 
\begin{eqnarray*}
    \stat&{}={}& \frac{1}{n(n-1)}\sum_{i\neq j}\inner{ g_{\lambda}^{1/2}(\hat{\Sigma}_0)\kk(\cdot,X_i)}{g_{\lambda}^{1/2}(\hat{\Sigma}_0)\kk(\cdot,X_j)}_{\h} \\ 
    &&\qquad+ \frac{1}{m(m-1)}\sum_{i\neq j}\inner{ g_{\lambda}^{1/2}(\hat{\Sigma}_0)\kk(\cdot,X_i^0)}{g_{\lambda}^{1/2}(\hat{\Sigma}_0)\kk(\cdot,X_j^0)}_{\h}\\ 
    &&\qquad\qquad-\frac{2}{nm}\sum_{i,j}\inner{ g_{\lambda}^{1/2}(\hat{\Sigma}_0)\kk(\cdot,X_i)}{g_{\lambda}^{1/2}(\hat{\Sigma}_0)\kk(\cdot,X_j^0)}_{\h},
\end{eqnarray*}
the following result shows that $\stat$ can be computed only through matrix operations and by solving a finite-dimensional eigensystem.

\begin{thm} \label{thm: computation}
Let $(\hat{\lambda}_i, \hat{\alpha_i})_i$ be the eigensystem of $\frac{1}{s} \Tilde{\hh}_s^{1/2}K_s\Tilde{\hh}_s^{1/2}$ where $K_s:=[K(Y_i^0,Y_j^0)]_{i,j \in [s]}$, $\hh_s= \Id_s -\frac{1}{s}\one_s \one_s^\top$, and $\Tilde{\hh}_s=\frac{s}{s-1}\hh_s$. Define $$G := \sum_{i}\left( \frac{g_{\lambda}(\hat{\lambda}_i)-g_{\lambda}(0)}{\hat{\lambda}_i}\right)\hat{\alpha}_i\hat{\alpha}_i^\top.$$ Then
\begin{equation*}
\stat=\frac{1}{n(n-1)}\left(\circled{\emph{\small{1}}}-\circled{\emph{\small{2}}}\right)+\frac{1}{m(m-1)}\left(\circled{\emph{\small{3}}}-\circled{\emph{\small{4}}}\right) - \frac{2}{nm}\circled{\emph{\small{5}}}, 
\end{equation*}
where 
\begin{align*}
&\circled{\emph{\small{1}}}=\one_n^\top\left(g_{\lambda}(0)K_n+\frac{1}{s}K_{ns}\Tilde{\hh}^{1/2}_{s}G\Tilde{\hh}^{1/2}_{s}K_{ns}^\top\right)\one_n,\\
&\circled{\emph{\small{2}}}=\emph{Tr}\left(g_{\lambda}(0)K_n+\frac{1}{s}K_{ns}\Tilde{\hh}^{1/2}_{s}G\Tilde{\hh}^{1/2}_{s}K_{ns}^\top\right),\\
&\circled{\emph{\small{3}}}=\one_m^\top\left(g_{\lambda}(0)K_m+\frac{1}{s}K_{ms}\Tilde{\hh}^{1/2}_{s}G\Tilde{\hh}^{1/2}_{s}K_{ms}^\top\right)\one_m,\\
&\circled{\emph{\small{4}}}=\emph{Tr}\left(g_{\lambda}(0)K_m+\frac{1}{s}K_{ms}\Tilde{\hh}^{1/2}_{s}G\Tilde{\hh}^{1/2}_{s}K_{ms}^\top\right),\quad\text{and}\\
&\circled{\emph{\small{5}}}=\one_m^\top\left(g_{\lambda}(0)K_{mn}+\frac{1}{s}K_{ms}\Tilde{\hh}^{1/2}_{s}G\Tilde{\hh}^{1/2}_{s}K_{ns}^\top\right)\one_n,
\end{align*}
with $K_n:=[K(X_i,X_j)]_{i,j \in [n]}$, $K_m:=[K(X_i^0,X_j^0)]_{i,j \in [m]}$, $K_{ns}:=[K(X_i,Y_j^0)]_{i \in [n],j \in [s]}$,  
$K_{ms}:=[K(X_i^0,Y_j^0)]_{i \in [m],j \in [s]}$, and  $K_{mn}:=[K(X_i^0,X_j)]_{i \in [m],j \in [n]}$.
\end{thm}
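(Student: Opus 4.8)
The plan is to start from the inner-product expansion of $\stat$ displayed just before the theorem and reduce each of the three groups of sums to matrix form by expressing the action of $\gShh$ on feature maps $K(\cdot,x)$ in terms of a finite-dimensional eigensystem. The crucial observation is that $\hat{\Sigma}_0$ is a $U$-statistic built out of the $s$ vectors $K(\cdot,Y_i^0)$, so its range lies in $\mathrm{span}\{K(\cdot,Y_i^0):i\in[s]\}$ (after centering). Concretely, writing $\Phi_s$ for the (formal) operator sending $e_i\mapsto K(\cdot,Y_i^0)$ and $\Tilde{\hh}_s$ for the scaled centering matrix, one has $\hat{\Sigma}_0=\frac{1}{s}\Phi_s\Tilde{\hh}_s\Phi_s^\top$ (this is the standard kernel-PCA identity; it is essentially the representation used in \cite{kpca,twosampletest} for the centered covariance operator). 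Hence the nonzero part of the spectrum of $\hat{\Sigma}_0$ coincides with that of the $s\times s$ matrix $\frac{1}{s}\Tilde{\hh}_s^{1/2}K_s\Tilde{\hh}_s^{1/2}$, whose eigensystem is $(\hat{\lambda}_i,\hat{\alpha}_i)_i$ in the statement, and the corresponding eigenfunctions of $\hat{\Sigma}_0$ in $\h$ are (up to normalization) $\frac{1}{\sqrt{s}}\Phi_s\Tilde{\hh}_s^{1/2}\hat{\alpha}_i/\sqrt{\hat{\lambda}_i}$.

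Next I would plug the spectral definition $\gShh=\sum_i g_\lambda^{1/2}(\hat{\lambda}_i)\,\psi_i\htens\psi_i + g_\lambda^{1/2}(0)(\Id-\sum_i\psi_i\htens\psi_i)$ into each inner product $\inner{\gShh K(\cdot,a)}{\gShh K(\cdot,b)}_\h = \inner{\gSh K(\cdot,a)}{K(\cdot,b)}_\h$. The $g_\lambda(0)$ piece contributes $g_\lambda(0)K(a,b)$, i.e.\ $g_\lambda(0)$ times the appropriate kernel matrix. The remaining piece is $\sum_i(g_\lambda(\hat{\lambda}_i)-g_\lambda(0))\inner{K(\cdot,a)}{\psi_i}_\h\inner{\psi_i}{K(\cdot,b)}_\h$; using the eigenfunction formula above, $\inner{K(\cdot,a)}{\psi_i}_\h = \frac{1}{\sqrt{s\hat{\lambda}_i}}(K_{\cdot s}\Tilde{\hh}_s^{1/2}\hat{\alpha}_i)_a$ where $K_{\cdot s}$ is the row of kernel evaluations of $a$ against $(Y^0_j)_j$. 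Collecting the sum over $i$ produces exactly the matrix $G=\sum_i\frac{g_\lambda(\hat{\lambda}_i)-g_\lambda(0)}{\hat{\lambda}_i}\hat{\alpha}_i\hat{\alpha}_i^\top$ sandwiched between $\frac{1}{s}K_{\cdot s}\Tilde{\hh}_s^{1/2}$ and $\Tilde{\hh}_s^{1/2}K_{s\cdot}$. Thus $\inner{\gSh K(\cdot,a)}{K(\cdot,b)}_\h$ equals the $(a,b)$ entry of $g_\lambda(0)K_{AB}+\frac{1}{s}K_{As}\Tilde{\hh}_s^{1/2}G\Tilde{\hh}_s^{1/2}K_{Bs}^\top$ for the relevant index sets $A,B$.

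Finally I would assemble the three double sums: the $X$--$X$ sum over $i\ne j$ is $\one_n^\top M_{nn}\one_n - \mathrm{Tr}(M_{nn})$ with $M_{nn}=g_\lambda(0)K_n+\frac1s K_{ns}\Tilde{\hh}_s^{1/2}G\Tilde{\hh}_s^{1/2}K_{ns}^\top$, giving $\circled{1}-\circled{2}$; similarly the $X^0$--$X^0$ sum gives $\circled{3}-\circled{4}$; and the cross sum $\sum_{i,j}$ (no diagonal constraint) gives $\circled{5}$. Multiplying by the stated normalizing constants reproduces the claimed formula. The main obstacle — and the place requiring care rather than cleverness — is the bookkeeping around the centering: one must verify the identity $\hat{\Sigma}_0=\frac1s\Phi_s\Tilde{\hh}_s\Phi_s^\top$ (accounting for the $U$-statistic normalization $\frac{s}{s-1}$ hidden in $\Tilde{\hh}_s$ versus $\hh_s$), confirm that the eigenfunctions $\psi_i$ are correctly normalized in $\h$ so that the factor $1/\hat{\lambda}_i$ in $G$ rather than $1/\hat{\lambda}_i^{1/2}$ is the right one, and check that the $g_\lambda(0)(\Id-\sum\psi_i\htens\psi_i)$ term does not contaminate the matrix computation beyond the clean $g_\lambda(0)K(\cdot,\cdot)$ contribution (it does not, because the cross term between the projection complement and $K(\cdot,a)$ telescopes into the same $K(a,b)$ via $\inner{K(\cdot,a)}{K(\cdot,b)}_\h=K(a,b)$). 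These are all routine once the kernel-PCA representation is in place, so I expect no genuine difficulty, only the need to track constants precisely.
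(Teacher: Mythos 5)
Your proposal is correct and follows exactly the route the paper takes: the paper's proof simply defers to Theorem 3 of the two-sample reference, whose argument is precisely your kernel-PCA identity $\hat{\Sigma}_0=\frac{1}{s}\Phi_s\Tilde{\hh}_s\Phi_s^*$, the rearrangement $\gSh=g_\lambda(0)\Id+\sum_i(g_\lambda(\hat{\lambda}_i)-g_\lambda(0))\psi_i\htens\psi_i$, and the assembly of the three double sums into the quadratic forms and traces. The bookkeeping you flag (the $\frac{s}{s-1}$ factor, the $1/\hat{\lambda}_i$ normalization producing $G$, restriction to nonzero $\hat{\lambda}_i$) all checks out.
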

Note that in the case of Tikhonov regularization, $G=\frac{-1}{\lambda}(\frac{1}{s}\Tilde{\hh}^{1/2}_sK_s\Tilde{\hh}^{1/2}_s+\lambda \Id_s)^{-1}$. The complexity of computing $\stat$ is given by $O(s^3+m^2+n^2+ns^2+ms^2)$. We would like to mention that since $\hat{\eta}^{TS}_{\lambda}$ is based on two sets of samples, a result very similar to Theorem~\ref{thm: computation} is presented in \cite[Theorem 3]{twosampletest} in the context of two-sample testing.
\subsection{Spectral regularized concentration test (SRCT)}\label{subsec:srct}
By applying Chebyshev inequality to $\stat$ under $H_0$, the following result provides an $\alpha$-level test, which we refer to as SRCT.

Define $\Ntlh : = \norm{\SgLh\hat{\Sigma}_0\SgLh}_{\hs}$.

\begin{thm} [Critical region--SRCT] \label{thm:typI-Gamma}
Let $n \geq 2$ and $m\geq 2$. Suppose $(A_0)$--$(A_2)$ hold. Then for any $\alpha >0,$ $c_1 \geq 65$ and 
$\frac{4c_1\K}{s}\max\{\log\frac{96\K s}{\alpha},\log\frac{12}{\alpha}\} \leq \lambda \leq \norm{\Sigma_0}_{\op}$,
$$P_{H_0}\left\{\stat \geq  \gamma \right\} \leq \alpha,$$
where $\gamma=\frac{12\Cs\Ntlh}{b_1\sqrt{\alpha}}\left( \frac{1}{n}+\frac{1}{m}\right)$, $b_1 = \sqrt{\frac{4}{9}-\frac{16}{3\sqrt{3c_1}}-\frac{32}{9c_1}}.$ 
Furthermore, if $C:=\sup_i\norm{\phi_i}_{\infty} < \infty$, the above bound holds for $4c_1C^2\Nol\log\frac{48\Nol}{\alpha}\leq s$.
\end{thm}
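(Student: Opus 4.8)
The plan is to condition on the sample $(Y^0_i)_{i=1}^s$ used to construct $\SLh$, reduce $\stat$ to a centered two-sample $U$-statistic, compute its conditional variance exactly, and then apply Chebyshev's inequality after transferring the random normalization $\Ntlh$ appearing in $\gamma$ to its population counterpart $\Ntl$ on a high-probability event.

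Conditioning on $(Y^0_i)_{i=1}^s$ fixes the operator $\gShh$, which is bounded by $(A_2)$. Set $V_i:=\gShh(K(\cdot,X_i)-\mu_0)$, $W_j:=\gShh(K(\cdot,X^0_j)-\mu_0)$, $\bar V:=n^{-1}\sum_iV_i$, $\bar W:=m^{-1}\sum_jW_j$. Expanding the inner products in \eqref{eq:twosamplestat} gives
\begin{equation*}
\stat=\frac{1}{n(n-1)}\sum_{i\neq j}\inner{V_i}{V_j}_{\h}\ -\ 2\inner{\bar V}{\bar W}_{\h}\ +\ \frac{1}{m(m-1)}\sum_{i\neq j}\inner{W_i}{W_j}_{\h}.
\end{equation*}
Under $H_0$ the samples $(X_i)_i$, $(X^0_j)_j$ are i.i.d.\ $P_0$ and mutually independent (and independent of $(Y^0_i)_{i=1}^s$), so $\E[V_i\mid(Y^0_i)_{i=1}^s]=\gShh(\mu_{P_0}-\mu_0)=0$ and likewise $\E[W_j\mid(Y^0_i)_{i=1}^s]=0$. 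Consequently each of the three pieces has conditional mean zero, the first and third are degenerate one-sample $U$-statistics, $\bar V$ and $\bar W$ are conditionally independent, and, by independence of the two samples together with the conditional mean-zero property, the three pieces are pairwise uncorrelated. Using $\E[V_i\htens V_i\mid(Y^0_i)_{i=1}^s]=\gShh\Sigma_0\gShh=:\Sigma_V$ (since $\Sigma_{P_0}=\Sigma_0$) together with $\E[\inner{V_1}{V_2}_{\h}^{2}\mid(Y^0_i)_{i=1}^s]=\norm{\Sigma_V}_{\hs}^{2}$, a routine second-moment computation yields, for $n,m\ge2$,
\begin{equation*}
\mathrm{Var}\bigl(\stat\mid(Y^0_i)_{i=1}^s\bigr)=\Bigl(\tfrac{2}{n(n-1)}+\tfrac{4}{nm}+\tfrac{2}{m(m-1)}\Bigr)\norm{\Sigma_V}_{\hs}^{2}\ \le\ 4\Bigl(\tfrac1n+\tfrac1m\Bigr)^{2}\norm{\Sigma_V}_{\hs}^{2}.
\end{equation*}

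Next I would control $\norm{\Sigma_V}_{\hs}$ by a constant multiple of $\Ntlh$. Assumptions $(A_1)$--$(A_2)$ give $(x+\lambda)g_\lambda(x)\le\Cs$ on the spectrum of $\hat\Sigma_0$, so by functional calculus $\gSh\preceq\Cs\,\SLh^{-1}$; conjugating by $\Sigma_0^{1/2}$, using monotonicity of conjugation and of $A\mapsto\norm{A}_{\hs}$ on positive operators, and inserting $\SL^{1/2}\SL^{-1/2}$,
\begin{equation*}
\norm{\Sigma_V}_{\hs}=\norm{\Sigma_0^{1/2}\gSh\,\Sigma_0^{1/2}}_{\hs}\ \le\ \Cs\norm{\SLh^{-1/2}\Sigma_0\SLh^{-1/2}}_{\hs}\ \le\ \Cs\,\Mop^{2}\,\Ntl.
\end{equation*}
Now I would invoke the dedicated estimation-error lemmas: Lemmas~\ref{lemma:adaptation Upper}--\ref{lemma:adaptation lower} bound $\Mop$ and $\Miop$, and Lemmas~\ref{lemma: ntlh upper bound}--\ref{lemma: ntlh lower bound} bound $\Ntl$ in terms of $\Ntlh$, on an event $\mathcal{E}$ with $P(\mathcal{E}^{c})\le\alpha/2$ precisely under the stated lower bound $\lambda\ge\frac{4c_1\K}{s}\max\{\log\frac{96\K s}{\alpha},\log\frac{12}{\alpha}\}$, $c_1\ge65$ (the numbers $96\K s$ and $12$ reflecting the aggregation of the individual failure probabilities of the underlying operator Bernstein estimates). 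On $\mathcal{E}$ these combine into $\norm{\Sigma_V}_{\hs}\le c_0\,\Cs\,\Ntlh$ with a constant $c_0=c_0(c_1)$, and $b_1$ is defined so that this is matched by the constant in $\gamma$. Then, on $\mathcal{E}$, Chebyshev's inequality and $\gamma=\frac{12\,\Cs\,\Ntlh}{b_1\sqrt{\alpha}}(\tfrac1n+\tfrac1m)$ give $P_{H_0}\{\stat\ge\gamma\mid(Y^0_i)_{i=1}^s\}\le\mathrm{Var}(\stat\mid(Y^0_i)_{i=1}^s)/\gamma^{2}\le\alpha/2$; taking expectation over $(Y^0_i)_{i=1}^s$ and adding $P(\mathcal{E}^{c})\le\alpha/2$ gives $P_{H_0}\{\stat\ge\gamma\}\le\alpha$. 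For the second statement, when $\sup_i\norm{\phi_i}_{\infty}=C<\infty$ I would replace the worst-case operator Bernstein bound (which used $\sup_xK(x,x)\le\K$) by its intrinsic-dimension refinement, whose variance proxy is $C^{2}\Nol$ rather than $\K/\lambda$; this makes $P(\mathcal{E}^{c})\le\alpha/2$ hold under $s\ge4c_1C^{2}\Nol\log\frac{48\Nol}{\alpha}$, and the rest is unchanged.

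The main obstacle is the transfer to $\Ntlh$. Since $\gamma$ must be stated through the computable quantity $\Ntlh$, one has to pass from a variance bound naturally involving $\Sigma_0$ and $g_\lambda(\hat\Sigma_0)$ to one involving $\hat\Sigma_0$ and $\SLh^{-1}$, and this is legitimate only on a concentration event whose failure probability has to be balanced against the Chebyshev term with carefully tracked constants --- hence the specific form of $b_1$ and of the lower bound on $\lambda$. This is exactly where the sharp new estimation-error bounds between $\Sigma_0$ and $\hat\Sigma_0$ (Lemmas~\ref{lemma:adaptation Upper}--\ref{lemma:adaptation lower}) and between $\Ntl$ and $\Ntlh$ (Lemmas~\ref{lemma: ntlh upper bound}--\ref{lemma: ntlh lower bound}) are needed, rather than generic off-the-shelf concentration results.
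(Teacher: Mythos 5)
Your proposal is correct and follows essentially the same route as the paper: condition on $(Y^0_i)_{i=1}^s$, bound the conditional second moment of $\stat$ by a multiple of $\norm{\M}_{\op}^4\Ntlsq(\tfrac1{n^2}+\tfrac1{m^2})$ (this is exactly Lemma~\ref{Lemma: bounding expectations} with $P=P_0$), apply Chebyshev, control $\norm{\M}_{\op}^2\le 2$ on a high-probability event, and finally pass from $\Ntl$ to the computable $\Ntlh$ via the lower-bound Lemma~\ref{lemma: ntlh lower bound}, which is where $b_1$ and the condition on $\lambda$ (or on $s$ under uniform boundedness) originate. The only nit is attribution: the bounds on $\norm{\M}_{\op}$ come from \citep[Lemma B.2]{kpca} (resp.\ \citep[Lemma A.17]{twosampletest}) rather than directly from Lemmas~\ref{lemma:adaptation Upper}--\ref{lemma:adaptation lower}, which instead feed into Lemma~\ref{lemma: ntlh lower bound}.
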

Note that unlike in the Oracle test, the threshold $\gamma$ and the test statistic $\stat$ in the above result is completely data-driven and computable, with $\Ntlh$ being computed based on $(\hat{\lambda}_i)_i$ from Theorem~\ref{thm: computation}. The following result provides sufficient conditions on the separation boundary to achieve the desired power.
 
\begin{thm} [Separation boundary-SRCT]\label{thm:typII-Gamma}
Suppose $(A_0)$--$(A_4)$ and $m\geq n$. Let $$\sup_{P \in \PP} \norm{\T^{-\theta}u}_{\Lp} < \infty,$$ $\norm{\Sigma_{0}}_{\op} \geq\lambda=d_{\theta}\Delta_{n}^{\frac{1}{2\Tilde{\theta}}}$, $d_{\theta}>0$, where $d_\theta$ is a constant that depends on $\theta$. For any $0<\delta\le 1$, if $s \geq 32c_1\K \lambda^{-1}\log(\max\{17920\K^2\lambda^{-1},6\}\delta^{-1})$ and $\Delta_{n}$ satisfies 
$$\frac{\Delta_{n}^{\frac{2\Tilde{\theta}+1}{2\tilde{\theta}}}}{\mathcal{N}_{2}\left(d_{\theta}\Delta_{n}^{\frac{1}{2\tilde{\theta}}}\right)} \gtrsim \frac{d_{\theta}^{-1}\delta^{-2}}{ n^{2}},\qquad\,
\frac{\Delta_n}{\sqrt{\mathcal{N}_{1}\left(d_{\theta}\Delta_{n}^{\frac{1}{2\tilde{\theta}}}\right)}} \gtrsim\frac{(\alpha^{-1/2}+\delta^{-1})}{n},$$
then
\begin{equation}\inf_{P \in \PP} P_{H_1}\left\{\stat
\geq \gamma \right\} \geq 1-4\delta,\label{Eq:type-2}\end{equation}
where $\gamma=\frac{12\Cs\Ntlh}{b_1\sqrt{\alpha}}\left( \frac{1}{n}+\frac{1}{m}\right)$, $b_1 = \sqrt{\frac{4}{9}-\frac{16}{3\sqrt{3c_1}}-\frac{32}{9c_1}},$ $c_1\geq 65$ and $\Tilde{\theta}=\min(\theta,\xi)$. 
Furthermore, suppose  $C:=\sup_{i}\norm{\phi_i}_{\infty} < \infty$. Then \eqref{Eq:type-2} holds if $s\geq 32c_1C^2\Nol\log\frac{32\Nol}{\delta}$ and when the above  conditions on $\Delta_{n}$ are replaced by
$$\frac{\Delta_{n}}{\mathcal{N}_{1}\left(d_{\theta}\Delta_{n}^{\frac{1}{2\tilde{\theta}}}\right)} \gtrsim \frac{\delta^{-2}}{n^{2}},\qquad\frac{\Delta_n}{\sqrt{\mathcal{N}_{1}\left(d_{\theta}\Delta_{n}^{\frac{1}{2\tilde{\theta}}}\right)}} \gtrsim\frac{(\alpha^{-1/2}+\delta^{-1})}{n}.$$
\end{thm}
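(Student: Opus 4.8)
\textbf{Proof proposal for Theorem~\ref{thm:typII-Gamma}.}

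The plan is to lower bound $\stat$ under $H_1$ by relating it to the population quantity $\eta_\lambda(P,P_0)$, and then to show that $\eta_\lambda(P,P_0)$ is sufficiently large once $\Delta_n$ satisfies the stated conditions, while the stochastic fluctuations of $\stat$ around its conditional mean (and the perturbation caused by replacing $\Sigma_0$ by $\hat\Sigma_0$) are controlled by the second stated condition. First I would condition on the samples $(Y_i^0)_{i=1}^s$ used to form $\hat\Sigma_0$, so that $\stat$ becomes a genuine two-sample $U$-statistic; its conditional expectation is $\hat\eta_\lambda := \norm{\gShh(\mu_P-\mu_0)}_\h^2$. The sample-size condition on $s$, together with Lemmas~\ref{lemma:adaptation Upper}, \ref{lemma:adaptation lower} (the $\Sigma_0$ vs.\ $\hat\Sigma_0$ bounds) and Lemmas~\ref{lemma: ntlh upper bound}, \ref{lemma: ntlh lower bound} (the $\Ntlh$ vs.\ $\Ntl$ bounds), ensures that on an event of probability at least $1-\delta$ (or $1-2\delta$), $\Mop$ and $\Miop$ are bounded by absolute constants, so $\hat\eta_\lambda \asymp \eta_\lambda(P,P_0)$ and $\Ntlh \asymp \Ntl$; this lets me replace the data-driven threshold $\gamma$ by a deterministic multiple of $\Ntl(1/n+1/m)$.

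Next I would invoke the lower bound on $\eta_\lambda(P,P_0)$ in terms of $\chi^2(P,P_0)=\norm u_{\Lp}^2$: using Lemma~\ref{lemma: bounds for eta} and the source condition $\norm{\T^{-\theta}u}_{\Lp}<\infty$ together with assumptions $(A_1)$--$(A_4)$, one gets $\eta_\lambda(P,P_0)\gtrsim \Delta_n - C\lambda^{2\tilde\theta}\norm{\T^{-\theta}u}^2$, and the choice $\lambda = d_\theta\Delta_n^{1/(2\tilde\theta)}$ with $d_\theta$ small makes the bias term at most a constant fraction of $\Delta_n$, hence $\eta_\lambda(P,P_0)\gtrsim\Delta_n$. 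Then I would apply a Chebyshev/Bernstein-type bound to the conditional two-sample $U$-statistic $\stat$ to show that, with conditional probability at least $1-\delta$, $\stat \geq \hat\eta_\lambda - c\bigl(\text{variance terms}\bigr)$. The dominant variance contributions are of order $\Ntl/n$ (or $\sqrt{\Ntl}/n$ after using the boundedness case via $\Nol$) and $\hat\eta_\lambda\,\Ntl^{1/2}/n$; requiring $\Delta_n/\sqrt{\Nol(\lambda)}\gtrsim (\alpha^{-1/2}+\delta^{-1})/n$ exactly dominates the threshold $\gamma\asymp\Ntl(1/n+1/m)\lesssim \Ntl/n$ (using $m\geq n$) plus the fluctuation, while $\Delta_n^{(2\tilde\theta+1)/(2\tilde\theta)}/\Ntl\gtrsim \delta^{-2}/n^2$ handles the lower-order variance term arising from the $\hat\Sigma_0$ perturbation interacting with $\lambda=d_\theta\Delta_n^{1/(2\tilde\theta)}$. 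Combining these events via a union bound over the at most four bad events gives $\inf_{P\in\PP}P_{H_1}\{\stat\geq\gamma\}\geq 1-4\delta$.

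For the uniform boundedness case $C=\sup_i\norm{\phi_i}_\infty<\infty$, I would replace the operator-norm concentration of $\hat\Sigma_0 - \Sigma_0$ (which needs $s\gtrsim \K\lambda^{-1}\log(\cdot)$) by the sharper bound that uses $\Nol$ in place of $\K/\lambda$ in the effective-dimension role, giving the weaker requirement $s\gtrsim C^2\Nol\log(\Nol/\delta)$; correspondingly the first separation condition improves to $\Delta_n/\Nol(\lambda)\gtrsim \delta^{-2}/n^2$ because the bias/variance trade-off in the power analysis is governed by $\Nol$ rather than $\Ntl^2$ type quantities. The main obstacle I anticipate is the careful bookkeeping of the perturbation analysis: controlling $\stat$ by $\hat\eta_\lambda$ requires bounding the conditional variance of a two-sample $U$-statistic whose kernel $h$ involves $\gShh$, and then transferring all $\hat\Sigma_0$-dependent quantities ($\hat\eta_\lambda$, $\Ntlh$, the variance bounds) back to their $\Sigma_0$-counterparts uniformly over $P\in\PP$ while keeping every constant independent of $\lambda$ — this is where the precise forms of Lemmas~\ref{lemma:adaptation Upper}--\ref{lemma: ntlh lower bound} and assumption $(A_4)$ (which guarantees $\gl(x)(x+\lambda)\geq C_4$, i.e.\ $\gl$ does not degenerate) are essential, and where a naive argument would lose a factor of $\Ntl$ or a $\log$ that would weaken the stated rates.
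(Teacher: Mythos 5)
Your proposal follows essentially the same route as the paper's proof: condition on $(Y_i^0)_{i=1}^s$ so that $\stat$ is a two-sample $U$-statistic with conditional mean $\zeta=\norm{\gShh(\mu_P-\mu_0)}_\h^2$, control its conditional variance via Lemma~\ref{Lemma: bounding expectations} and Chebyshev, transfer all $\hat\Sigma_0$-dependent quantities ($\zeta$, $\Ntlh$, the variance bounds) to their $\Sigma_0$-counterparts through operator-norm bounds on $\SLh^{-1/2}\SL^{1/2}$ guaranteed by the condition on $s$ (Lemmas~\ref{lemma: ntlh upper bound}, \ref{lemma: ntlh lower bound} and the cited concentration results), lower bound $\eta_\lambda$ by a constant multiple of $\norm{u}_{\Lp}^2\geq\Delta_n$ via Lemma~\ref{lemma: bounds for eta} with $\lambda=d_\theta\Delta_n^{1/(2\tilde\theta)}$, and match the two separation conditions to the threshold-plus-first-order-fluctuation and second-order-variance terms before taking a union bound to reach $1-4\delta$. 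This is the paper's argument, including the $\Nol$-based modification under uniform boundedness, so the proposal is correct.
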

\begin{rem}\label{rem:srct}
(i) Comparing the conditions on the separation boundary in Theorem \ref{thm:typII-Gamma} to those of Theorem \ref{thm:typII-oracle}, it is easy to verify that the claims in Corollaries \ref{coro:poly-oracle} and \ref{coro:exp-oracle} also hold for $\emph{SRCT}$. Therefore, $\emph{SRCT}$ achieves minimax optimality in the same ranges of $\theta$ as the Oracle test.\vspace{1mm}\\
(ii) In the case of polynomial decay, when $\tilde{\theta} > \frac{1}{2}- \frac{1}{4\beta}$, the condition on $s$---the number of samples needed to estimate the covariance operator $\Sigma_0$---reduces to $s \gtrsim n^{\frac{2\beta}{4\theta\beta+1}}\log n$, which is of sub-linear order and is implied if $s \gtrsim n \log n$. When $\tilde{\theta}\le\frac{1}{2}-\frac{1}{4\beta},$ the condition becomes $s\gtrsim n^{\frac{4\beta}{4\theta\beta+2\beta+1}}\log n$ which is implied for any $\theta$ and $\beta$ if $s \gtrsim n^2 \log n$. Furthermore, under uniform boundedness, the condition on $s$ becomes $s \gtrsim n^{\frac{2}{4\theta\beta+1}} \log n$ which is of sublinear order for $\theta > \frac{1}{4\beta}.$ In case of exponential decay, for $\theta> \frac{1}{2}$, the condition is $s \gtrsim n^{\frac{1}{2\theta}} (\log n)^{1-\frac{1}{4\theta}}$, which is implied for any $\theta >\frac{1}{2}$, if $s \gtrsim n \sqrt{\log n}$. For $\theta < \frac{1}{2}$, the condition is $s \gtrsim n^{\frac{2}{2\theta+1}} (\log n)^{\frac{2\theta}{2\theta+1}}$ which is implied if $s\gtrsim n^2.$ Furthermore, if $\sup_{i}\norm{\phi_i}_{\infty} < \infty$ holds, then the condition is $s \gtrsim (\log n) (\log\log n).$
\end{rem}
\subsection{Spectral regularized permutation test (SRPT)}\label{subsec:srpt}
Instead of using a concentration inequality-based test threshold as in SRCT, in this section, we study the permutation approach to compute the test threshold \citep{lehmann,Pesarin,permutations}. We refer to the resulting test as SRPT. We show that SRPT achieves a minimax optimal separation boundary with a better constant compared to that of SRCT.

Recall that our test statistic defined in Section~\ref{subsec:test-statistic} involves three sets of independent samples,  $(X_i)_{i=1}^n \stackrel{i.i.d.}{\sim} P$, $(X_j^0)_{j=1}^m \stackrel{i.i.d.}{\sim} P_0$, $(Y_i^0)_{i=1}^s \stackrel{i.i.d.}{\sim} P_0$. Define $(U_i)_{i=1}^n:=(X_i)_{i=1}^n$, and $(U_{n+j})_{j=1}^m:=(X_j^0)_{j=1}^m$. Let $\Pi_{n+m}$ be the set of all possible permutations of $\{1,\ldots,n+m\}$ with $\pi \in \Pi_{n+m}$ being a randomly selected permutation from the $D$ possible permutations, where $D :=|\Pi_{n+m}|= (n+m)!$. Define $(X^{\pi}_i)_{i=1}^n := (U_{\pi(i)})_{i=1}^n$ and $(X^{0\pi}_j)_{j=1}^m := (U_{\pi(n+j)})_{j=1}^m$. Let $\hat{\eta}^{\pi}_{\lambda}:=\stat(X^{\pi},X^{0\pi},Y^0)$ be the statistic based on the permuted samples, and $(\pi^i)_{i=1}^B$ be $B$ randomly selected permutations from $\Pi_{n+m}$. For simplicity, define $\hat{\eta}^i_{\lambda}:= \hat{\eta}^{\pi^i}_{\lambda}$ to represent the statistic based on permuted samples w.r.t.~the random permutation $\pi^i$. Given the samples $(X_i)_{i=1}^n$, $(X_j^0)_{j=1}^m$ and $(Y_i^0)_{i=1}^s$, define  $$F_{\lambda}(x):= \frac{1}{D}\sum_{\pi \in \Pi_{n+m}}\II(\hat{\eta}^{\pi}_{\lambda} \leq x)$$ to be the permutation distribution function, and define $$q_{1-\alpha}^{\lambda}:= \inf\{q \in \R: F_{\lambda}(q) \geq 1-\alpha\}.$$ Furthermore, we define the empirical permutation distribution function based on $B$ random permutations as
$$\hat{F}^{B}_{\lambda}(x):= \frac{1}{B+1}\sum_{i=0}^{B}\II(\hat{\eta}^{i}_{\lambda} \leq x),$$ where $\hat{\eta}_{\lambda}^0=\stat$ and define $$\hat{q}_{1-\alpha}^{B,\lambda}:= \inf\{q \in \R: \hat{F}^B_{\lambda}(q) \geq 1-\alpha\}.$$

Based on these notations, the following result presents an $\alpha$-level test with a completely data-driven critical level.

\begin{thm}[Critical region--SRPT]\label{thm: permutations typeI}

For any $0<\alpha\leq 1$, 
$P_{H_0}\{\stat \geq \hat{q}_{1-\alpha}^{B,\lambda} \} \leq \alpha.$
\end{thm}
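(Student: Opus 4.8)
The plan is to prove this via the standard exchangeability argument for permutation tests, which shows that the $p$-value based on a permutation distribution is super-uniform under $H_0$. The key observation is that under $H_0$, we have $P = P_0$, so the samples $(X_i)_{i=1}^n \cup (X_j^0)_{j=1}^m$, which we relabel as $(U_i)_{i=1}^{n+m}$, are i.i.d.~from $P_0$, and hence their joint distribution is invariant under any permutation $\pi \in \Pi_{n+m}$. Crucially, the auxiliary samples $(Y_i^0)_{i=1}^s$ are also drawn from $P_0$ and are independent of the $U$'s; we will condition on $(Y_i^0)_{i=1}^s$ throughout, so that $\hat{\Sigma}_0$ and hence $g_\lambda^{1/2}(\hat{\Sigma}_0)$ act as fixed (measurable) objects, and the statistic $\stat = \hat{\eta}^0_\lambda$ becomes a fixed measurable function of $(U_1,\dots,U_{n+m})$ applied after the identity permutation.

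First I would set up the exchangeability: conditionally on $(Y_i^0)_{i=1}^s$, the vector $(U_{\pi(1)},\dots,U_{\pi(n+m)})$ has the same distribution for every fixed $\pi$, and more strongly, if $\pi$ is drawn uniformly at random from $\Pi_{n+m}$ independently of the data, then the pair consisting of the original statistic and the randomly permuted one has an exchangeable structure. Concretely, enlarging the list of $B$ random permutations $(\pi^i)_{i=1}^B$ by including the identity as $\pi^0$, the collection $(\hat{\eta}^0_\lambda, \hat{\eta}^1_\lambda, \dots, \hat{\eta}^B_\lambda)$ is exchangeable under $H_0$, conditionally on $(Y_i^0)_{i=1}^s$ — this is because composing the identity with a uniformly random permutation and composing a uniformly random permutation with another are distributionally the same, and the $U$'s are themselves exchangeable. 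I would invoke the classical lemma (e.g., from \citealt{lehmann} or \citealt{permutations}) that for exchangeable real-valued random variables $Z_0, Z_1, \dots, Z_B$, one has $\Pr\{Z_0 \geq \hat{q}_{1-\alpha}\} \leq \alpha$, where $\hat{q}_{1-\alpha}$ is the $\lceil (1-\alpha)(B+1)\rceil$-th order statistic of $(Z_0,\dots,Z_B)$ — equivalently $\hat{q}_{1-\alpha}^{B,\lambda}$ as defined via $\hat{F}^B_\lambda$. Applying this conditionally on $(Y_i^0)_{i=1}^s$ gives $P_{H_0}\{\stat \geq \hat{q}_{1-\alpha}^{B,\lambda} \mid (Y_i^0)_{i=1}^s\} \leq \alpha$, and then taking expectation over $(Y_i^0)_{i=1}^s$ yields the claim.

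The main technical point to be careful about — and the only real obstacle — is verifying the conditional exchangeability rigorously, in particular that including the non-permuted statistic $\hat{\eta}^0_\lambda$ in the list does not break it. The standard fix is to note that $\hat{\eta}^0_\lambda$ can be viewed as $\hat{\eta}^{\pi^0}_\lambda$ with $\pi^0$ the identity, and since $(\hat{\eta}^{\pi^0}_\lambda, \hat{\eta}^{\pi^1}_\lambda,\dots)$ for i.i.d.~uniform $\pi^i$ (with $\pi^0$ also uniform) is clearly exchangeable, and since left-multiplying all permutations by any fixed permutation is a bijection of $\Pi_{n+m}$, replacing the random $\pi^0$ by the identity preserves the joint law of the tuple of statistics — this uses that $\stat$ as a function of $(U_1,\dots,U_{n+m})$ is symmetric enough, or more simply that $(U_i)$ being i.i.d.~makes $(U_{\pi(1)},\dots,U_{\pi(n+m)})$ equal in law to $(U_1,\dots,U_{n+m})$ for every deterministic $\pi$. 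I would also note that the bound holds for arbitrary (even degenerate) choices of $\hat{\Sigma}_0$ since we condition on it, and that no assumptions on $\lambda$, $s$, or the kernel beyond measurability are needed — consistent with the minimal hypotheses in the statement. Finally I would remark that the argument is entirely distribution-free under $H_0$ and does not require $(A_0)$–$(A_4)$, which is why the theorem is stated with no assumptions.
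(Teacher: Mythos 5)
Your argument is correct and is essentially the same standard exchangeability argument that the paper relies on: the paper's proof simply defers to \cite[Theorem 8]{twosampletest}, which establishes the result via exactly the conditional-on-$(Y_i^0)_{i=1}^s$ exchangeability of $(\hat{\eta}^0_\lambda,\dots,\hat{\eta}^B_\lambda)$ under $H_0$ and the classical quantile bound for exchangeable variables. Your additional care about including the identity permutation in the exchangeable tuple is the right technical point and matches the cited proof.
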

It is well known that the permutation approach exactly controls the type-I error. This follows from the exchangeability of samples under $H_0$ and the definition of $\qq$. Next, similar to Theorem \ref{thm:typII-Gamma}, the following result provides general conditions under which the power can be controlled. 
\begin{thm}[Separation boundary--SRPT] \label{thm: permutations typeII}
Suppose $(A_0)$--$(A_4)$ hold. Let  $m \geq n,$ 
$$\sup_{P \in \PP} \norm{\T^{-\theta}u}_{\Lp} < \infty,$$ $\norm{\Sigma_{0}}_{\op} \geq\lambda=d_{\theta}\Delta_{n}^{\frac{1}{2\Tilde{\theta}}}$, for some  $d_{\theta}>0$, where $d_\theta$ is a constant that depends on $\theta$. For any $0<\delta\le 1$, if  $n \geq d_3\delta^{-1/2}\log\frac{1}{\alpha}$ for some $d_3>0$, $B\geq \frac{3}{{\alpha}^2}\left(\log2\delta^{-1}+\alpha(1-\alpha)\right)$, $s \geq 280\K \lambda^{-1}\log(17920\K^2\lambda^{-1}\delta^{-1})$ and $\Delta_{n}$ satisfies 
$$\frac{\Delta_{n}^{\frac{2\Tilde{\theta}+1}{2\tilde{\theta}}}}{\mathcal{N}_{2}\left(d_{\theta}\Delta_{n}^{\frac{1}{2\tilde{\theta}}}\right)} \gtrsim \frac{d_{\theta}^{-1}(\delta^{-1}\log(1/\tilde{\alpha}))^2}{ n)^{2}},\qquad\,
\frac{\Delta_{n}}{\mathcal{N}_{2}\left(d_{\theta}\Delta_{n}^{\frac{1}{2\tilde{\theta}}}\right)} \gtrsim\frac{\delta^{-1}\log(1/\tilde{\alpha})}{ n},$$
then 
\begin{equation}\inf_{P \in \PP} P_{H_1}\left\{\stat
\geq \hat{q}_{1-\alpha}^{B,\lambda} \right\} \geq 1-5\delta,\label{Eq:type-2-perm}\end{equation}
where $\Tilde{\theta}=\min(\theta,\xi)$. Furthermore, suppose  $C:=\sup_{i}\norm{\phi_i}_{\infty} < \infty$. Then \eqref{Eq:type-2-perm} holds if $s \geq 136C^2\Nol\log\frac{32\Nol}{\delta}$ and when the above  conditions on $\Delta_{n}$ are replaced by
$$\frac{\Delta_{n}}{\mathcal{N}_{1}\left(d_{\theta}\Delta_{n}^{\frac{1}{2\tilde{\theta}}}\right)} \gtrsim \frac{(\delta^{-1}\log(1/\tilde{\alpha}))^2}{n^{2}},\qquad\frac{\Delta_{n}}{\mathcal{N}_{2}\left(d_{\theta}\Delta_{n}^{\frac{1}{2\tilde{\theta}}}\right)} \gtrsim \frac{\delta^{-1}\log(1/\tilde{\alpha})}{n}.$$
\end{thm}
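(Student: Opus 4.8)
The plan is to mimic the structure of the SRCT power analysis (Theorem~\ref{thm:typII-Gamma}), but replacing the concentration‐based threshold $\gamma$ with the permutation quantile $\hat q_{1-\alpha}^{B,\lambda}$, which forces us to control the permutation distribution rather than appeal to Chebyshev's inequality. First I would decompose the event $\{\stat < \hat q_{1-\alpha}^{B,\lambda}\}$ into two pieces: (a) a bound on the permutation quantile $\hat q_{1-\alpha}^{B,\lambda}$ by a deterministic threshold $\gamma^{\pi}$ of the same order as $\gamma$ in SRCT, and (b) a lower bound on $\stat$ itself under $H_1$. For part~(b) I would reuse verbatim the machinery from Theorem~\ref{thm:typII-Gamma}: condition on $(Y_i^0)_{i=1}^s$, use the two-sample $U$-statistic structure of $\stat$, control the bias $\E[\stat\mid Y^0] - \eta_\lambda$ via Assumption $(A_3)$ together with $\sup_{P\in\PP}\norm{\T^{-\theta}u}_{\Lp}<\infty$, control the variance via $\mathcal N_2(\lambda)$ (or $\mathcal N_1(\lambda)$ under uniform boundedness), and then invoke the events on which $\SLh$ is comparable to $\SL$ (Lemmas~\ref{lemma:adaptation Upper}, \ref{lemma:adaptation lower} and the $\Ntlh$ vs $\Ntl$ comparison, Lemmas~\ref{lemma: ntlh upper bound}, \ref{lemma: ntlh lower bound}), which hold with probability at least $1-\delta$ provided the stated lower bounds on $s$. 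The condition $\lambda = d_\theta \Delta_n^{1/(2\tilde\theta)}$ is exactly what is needed to balance the bias term $\Delta_n$ against the regularization error, giving $\eta_\lambda \gtrsim \Delta_n$.

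The genuinely new part is (a): bounding the permutation quantile. Here I would follow the standard two-moments approach to permutation tests (as in \citealt{twosampletest,permutations}): conditionally on the pooled sample $(U_i)_{i=1}^{n+m}$ and on $(Y_i^0)_{i=1}^s$, compute the first two moments of $\hat\eta^\pi_\lambda$ over a uniformly random permutation $\pi$. The conditional mean of $\hat\eta^\pi_\lambda$ should be small — of order $\frac{1}{n}+\frac1m$ times a degrees-of-freedom quantity — because permuting destroys the signal, and the conditional variance should be controlled by $\mathcal N_2(\gamma(\lambda))$-type quantities. Chebyshev on the permutation distribution then gives $q_{1-\alpha}^\lambda \lesssim \frac{1}{\sqrt\alpha}(\tfrac1n+\tfrac1m)\Ntlh$-type bound, but with the crucial improvement that the relevant moments are the \emph{conditional} (permutation) moments, which under $H_1$ are themselves random; one shows they concentrate around their $H_0$ analogues on a high-probability event (using $m\ge n$ and $n\gtrsim \delta^{-1/2}\log(1/\alpha)$). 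Finally, the Monte Carlo error from using only $B$ permutations instead of all $D$ is absorbed by a DKW/binomial-type argument requiring $B \geq \frac{3}{\alpha^2}(\log(2\delta^{-1}) + \alpha(1-\alpha))$, which is where that explicit bound on $B$ comes from; this passes from $q_{1-\alpha}^\lambda$ to $\hat q_{1-\alpha}^{B,\lambda}$ at the cost of one more $\delta$ in the failure probability, consistent with the $1-5\delta$ in the conclusion.

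Assembling: on the intersection of all the good events (covariance comparison, bias/variance control of $\stat$, permutation-moment concentration, Monte Carlo accuracy), one has simultaneously $\stat \gtrsim \eta_\lambda \gtrsim \Delta_n$ and $\hat q_{1-\alpha}^{B,\lambda} \lesssim \frac{1}{\sqrt\alpha}\big(\tfrac1n+\tfrac1m\big)\Ntlh \asymp \frac{\sqrt{\mathcal N_1(\lambda)}}{n}$ (or the $\mathcal N_2$ version), and the two displayed conditions on $\Delta_n$ are precisely what make $\Delta_n$ dominate the threshold; a union bound over the (at most five) failure events of probability $\le\delta$ each yields the $1-5\delta$ lower bound on the power. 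The uniform boundedness case is handled identically, swapping $\mathcal N_2$ for $\mathcal N_1$ in the variance of $\stat$ and relaxing the requirement on $s$ to $s\gtrsim \Nol\log(\Nol/\delta)$, since under $\sup_i\norm{\phi_i}_\infty<\infty$ the operator concentration of $\SLh$ around $\SL$ needs only $\Nol$ samples rather than $\K/\lambda$.

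The main obstacle I anticipate is step~(a) under the alternative $H_1$: the permutation distribution is computed from a pooled sample that is \emph{not} exchangeable when $P\ne P_0$, so its conditional moments are driven by both $P$ and $P_0$ and must be shown to stay close to their null values. Controlling this requires a careful decomposition of the permutation variance into ``within-$P$'', ``within-$P_0$'', and ``cross'' contributions and arguing that the signal $\eta_\lambda(P,P_0)$ — while large enough to be detected by $\stat$ — contributes only a lower-order term to the permutation spread, which is exactly the regime where $m\ge n$ and the lower bound on $n$ enter. This is the same difficulty navigated in \citealt{twosampletest} and I would import their technical lemmas, adapting the covariance operator from $\Sigma_{PQ}$ (w.r.t.\ $R=\frac{P+P_0}{2}$) to $\Sigma_0$ (w.r.t.\ $P_0$) and the separation from Hellinger to $\chi^2$, which only makes the bias term cleaner.
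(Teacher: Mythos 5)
Your overall architecture matches the paper's proof, which is itself a deferral: the paper lower-bounds $\stat$ under $H_1$ exactly as in Theorem~\ref{thm:typII-Gamma} (bias via $(A_3)$ and the choice $\lambda=d_\theta\Delta_n^{1/2\tilde\theta}$, variance via Lemma~\ref{Lemma: bounding expectations}, covariance comparison via the conditions on $s$ and \citealp[Lemma B.2]{kpca} or \citealp[Lemma A.17]{twosampletest}), bounds the permutation quantile via Lemma~\ref{lemma:bound quantile}, passes from $\qq$ to $\qqh$ using the condition on $B$, and takes a union bound — all following the proof of \citealp[Theorem 9]{twosampletest}. Your identification of the non-exchangeability issue under $H_1$, the role of $m\ge n$, of the lower bound on $n$, and of $B$ is accurate.

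There is, however, one concrete step in your plan that would fail to deliver the theorem as stated: bounding the permutation quantile by Chebyshev applied to the first two conditional moments of $\hat\eta^\pi_\lambda$. That yields $q^\lambda_{1-\alpha}\lesssim \alpha^{-1/2}\left(\frac1n+\frac1m\right)\Ntl$, i.e.\ a \emph{polynomial} dependence on $\alpha$, whereas the theorem's separation conditions (and Lemma~\ref{lemma:bound quantile}) carry a $\log(1/\alpha)$ factor. For small $\alpha$ one has $\alpha^{-1/2}\gg\log(1/\alpha)$, so your version proves a strictly weaker statement; moreover the logarithmic dependence is essential downstream, since the adaptive SRPT (Theorem~\ref{thm:perm adp TypeII}) replaces $\alpha$ by $\alpha/|\Lambda|$ with $|\Lambda|\asymp\log n$ and the whole point is to lose only a $\log\log n$ factor rather than $\sqrt{|\Lambda|}$. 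To get $\log(1/\alpha)$ you need exponential concentration of the permutation distribution — controlling all conditional moments of the permuted two-sample $U$-statistic (equivalently a Bernstein-type inequality for sampling without replacement, as in \citealt{permutations} and \citealt[Lemma A.15]{twosampletest}) — which is exactly what Lemma~\ref{lemma:bound quantile} packages and what your two-moment argument omits. The rest of your assembly (union bound over the good events, the $\mathcal N_1$ versus $\mathcal N_2$ swap under uniform boundedness, the relaxed condition on $s$) is consistent with the paper once this quantile bound is upgraded.
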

\begin{cor} \label{coro-perm:poly}
Suppose $\lambda_i \asymp i^{-\beta}$, $\beta>1$. Then for any $\delta>0$, 
$$\inf_{P \in \PP} P_{H_1}\left\{\stat
\geq \hat{q}_{1-\alpha}^{B,\lambda} \right\} \geq 1-5\delta,$$
when
$$\Delta_{n} =
\left\{
	\begin{array}{ll}
c(\alpha,\delta)n^{\frac{-4\tilde{\theta}\beta}{4\Tilde{\theta}\beta+1}},  &  \ \  \Tilde{\theta}> \frac{1}{2}-\frac{1}{4\beta} \\
		c(\alpha,\delta)n^{-\frac{8\Tilde{\theta}\beta}{4\Tilde{\theta}\beta+2\beta+1}}, & \ \  \Tilde{\theta} \le \frac{1}{2}-\frac{1}{4\beta}
	\end{array}
\right.,$$
with $c(\alpha,\delta)\gtrsim \delta^{-2}(\log \frac{1}{\alpha})^2$. Furthermore, if $\sup_{i}\norm{\phi_i}_{\infty} < \infty$, then 
$$\Delta_n = c(\alpha,\delta)n^{\frac{-4\tilde{\theta}\beta}{4\Tilde{\theta}\beta+1}}.$$
\end{cor}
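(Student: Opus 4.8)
The plan is to deduce the corollary directly from Theorem~\ref{thm: permutations typeII}: the only genuine work is to evaluate the effective dimensions $\Nol=\text{Tr}(\SgL\Sigma_0\SgL)$ and $\Ntl=\norm{\SgL\Sigma_0\SgL}_{\hs}$ under the polynomial eigenvalue decay, substitute the prescribed $\lambda=d_\theta\Delta_n^{1/(2\tilde{\theta})}$, and solve the two resulting inequalities for the smallest admissible separation $\Delta_n$.

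First I would observe that the nonzero eigenvalues of $\SgL\Sigma_0\SgL$ are $\lambda_i/(\lambda_i+\lambda)$, so $\Nol=\sum_i\lambda_i/(\lambda_i+\lambda)$ and $\Ntlsq=\sum_i(\lambda_i/(\lambda_i+\lambda))^2$. With $\lambda_i\asymp i^{-\beta}$ the $i$-th summand is $\asymp(1+\lambda i^\beta)^{-1}$, resp.\ $(1+\lambda i^\beta)^{-2}$, and an integral comparison --- legitimate with $\lambda$-independent constants precisely because $\beta>1$ makes $\int_0^\infty(1+t^\beta)^{-1}\,dt$ and $\int_0^\infty(1+t^\beta)^{-2}\,dt$ finite --- gives $\Nol\asymp\lambda^{-1/\beta}$ and $\Ntl\asymp\lambda^{-1/(2\beta)}$ for all $0<\lambda\le\norm{\Sigma_0}_{\op}$. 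Setting $\lambda=d_\theta\Delta_n^{1/(2\tilde{\theta})}$ then gives $\mathcal{N}_1(d_\theta\Delta_n^{1/(2\tilde{\theta})})\asymp\Delta_n^{-1/(2\tilde{\theta}\beta)}$ and $\mathcal{N}_2(d_\theta\Delta_n^{1/(2\tilde{\theta})})\asymp\Delta_n^{-1/(4\tilde{\theta}\beta)}$, with $d_\theta$ absorbed into the $\asymp$ constants. Feeding these into the two displayed conditions of Theorem~\ref{thm: permutations typeII} and collecting exponents, the first condition becomes $\Delta_n^{(4\tilde{\theta}\beta+2\beta+1)/(4\tilde{\theta}\beta)}\gtrsim n^{-2}(\delta^{-1}\log(1/\alpha))^2$, i.e.\ $\Delta_n\gtrsim n^{-8\tilde{\theta}\beta/(4\tilde{\theta}\beta+2\beta+1)}$, and the second becomes $\Delta_n^{(4\tilde{\theta}\beta+1)/(4\tilde{\theta}\beta)}\gtrsim n^{-1}\delta^{-1}\log(1/\alpha)$, i.e.\ $\Delta_n\gtrsim n^{-4\tilde{\theta}\beta/(4\tilde{\theta}\beta+1)}$, and in both cases the multiplicative constant can be taken of the order $\delta^{-2}(\log(1/\alpha))^2$ asserted in the statement.

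It then remains to identify the binding lower bound. Comparing exponents, $\frac{8\tilde{\theta}\beta}{4\tilde{\theta}\beta+2\beta+1}\ge\frac{4\tilde{\theta}\beta}{4\tilde{\theta}\beta+1}$ iff $2(4\tilde{\theta}\beta+1)\ge 4\tilde{\theta}\beta+2\beta+1$ iff $\tilde{\theta}\ge\frac12-\frac1{4\beta}$, so for $\tilde{\theta}>\frac12-\frac1{4\beta}$ the binding (larger) rate is $n^{-4\tilde{\theta}\beta/(4\tilde{\theta}\beta+1)}$ and for $\tilde{\theta}\le\frac12-\frac1{4\beta}$ it is $n^{-8\tilde{\theta}\beta/(4\tilde{\theta}\beta+2\beta+1)}$, matching the two-case expression; with $\Delta_n$ of this order, the remaining hypotheses of Theorem~\ref{thm: permutations typeII} ($m\ge n$, the lower bounds on $n$ and $B$, a polynomial-in-$n$ lower bound on $s$ coming from $s\gtrsim\lambda^{-1}\log(\lambda^{-1}\delta^{-1})$, and $\lambda=d_\theta\Delta_n^{1/(2\tilde\theta)}\le\norm{\Sigma_0}_{\op}$) hold once $n$ is large enough, which yields $\inf_{P\in\PP}P_{H_1}\{\stat\ge\qqh\}\ge 1-5\delta$. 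For the uniform-boundedness claim I would repeat the substitution with $\Nol$ replacing $\Ntl$ in the first condition only: it becomes $\Delta_n\gtrsim n^{-4\tilde{\theta}\beta/(2\tilde{\theta}\beta+1)}$, which is dominated by the unchanged second rate $n^{-4\tilde{\theta}\beta/(4\tilde{\theta}\beta+1)}$ since $2\tilde{\theta}\beta+1<4\tilde{\theta}\beta+1$, leaving the single rate $n^{-4\tilde{\theta}\beta/(4\tilde{\theta}\beta+1)}$ for all $\theta>0$.

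The calculations are essentially routine; the two points that need care are (i) checking that the $\asymp$ constants in the integral comparisons for $\Nol$ and $\Ntl$ do not degrade as $\lambda\to0$ --- which they do not, precisely because $\beta>1$ makes the governing integrals convergent --- and (ii) the exponent bookkeeping that locates the phase transition at $\tilde{\theta}=\frac12-\frac1{4\beta}$ and verifies that under uniform boundedness it is the ``variance-type'' condition, not the ``bias-type'' condition, that binds throughout.
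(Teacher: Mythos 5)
Your proposal is correct and follows essentially the same route as the paper: substitute the polynomial-decay bounds $\mathcal{N}_1(\lambda)\lesssim\lambda^{-1/\beta}$ and $\mathcal{N}_2(\lambda)\lesssim\lambda^{-1/(2\beta)}$ (the paper gets the latter via $\mathcal{N}_2(\lambda)\le\Vert\SgL\Sigma_0\SgL\Vert_{\op}^{1/2}\mathcal{N}_1^{1/2}(\lambda)$ and a cited lemma, you via a direct integral comparison) into the two conditions of Theorem~\ref{thm: permutations typeII} with $\lambda=d_\theta\Delta_n^{1/(2\tilde\theta)}$, and then locate the phase transition at $\tilde\theta=\frac12-\frac1{4\beta}$ and note that under uniform boundedness the $\mathcal{N}_2$-condition is the binding one. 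The exponent bookkeeping and the identification of the dominant rate in each regime match the paper's computation.
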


\begin{cor}\label{coro-perm:exp}
Suppose $\lambda_i \asymp e^{-\tau i}$, $\tau>0$. Then for any $\delta>0$, there exists $k_{\alpha,\delta}$ such that for all $n \geq k_{\alpha,\delta}$, 

$$\inf_{(P,Q) \in \PP} P_{H_1}\left\{\stat
\geq \hat{q}_{1-\alpha}^{B,\lambda} \right\} \geq 1-2\delta,$$
when
$$\Delta_{n} =
\left\{
	\begin{array}{ll}
		c(\alpha,\delta, \theta)\frac{\sqrt{\log n}}{n}, &  \ \ \Tilde{\theta}> \frac{1}{2} \\
		c(\alpha,\delta,\theta)\left(\frac{\sqrt{\log n}}{n}\right)^{\frac{4\tilde{\theta}}{2\tilde{\theta}+1}}, & \ \ \Tilde{\theta} \le \frac{1}{2}
	\end{array}
\right.,$$
where $c(\alpha,\delta,\theta)\gtrsim \max\left\{\sqrt{\frac{1}{2\tilde{\theta}}},1\right\}\delta^{-2}(\log \frac{1}{\alpha})^2$. Furthermore, if $\sup_{i}\norm{\phi_i}_{\infty} < \infty$, then 

$$\Delta_{n} = c(\alpha,\delta,\theta)\frac{\sqrt{\log n}}{n},$$
where $c(\alpha,\delta,\theta) \gtrsim \max\left\{\sqrt{\frac{1}{2\tilde{\theta}}},\frac{1}{2\tilde{\theta}},1\right\}\delta^{-2}(\log \frac{1}{\alpha})^2$.
\end{cor}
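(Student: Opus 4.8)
The plan is to obtain Corollary~\ref{coro-perm:exp} directly from Theorem~\ref{thm: permutations typeII} by evaluating the degrees-of-freedom quantities $\Nol=\mathrm{Tr}(\SgL\Sigma_0\SgL)=\sum_i\frac{\lambda_i}{\lambda_i+\lambda}$ and $\Ntl=\norm{\SgL\Sigma_0\SgL}_{\hs}=\big(\sum_i(\tfrac{\lambda_i}{\lambda_i+\lambda})^2\big)^{1/2}$ under the exponential decay $\lambda_i\asymp e^{-\tau i}$, and then solving the two sufficient inequalities on $\Delta_n$ for the smallest admissible separation. First I would estimate $\Nol$ and $\Ntl$: for $\lambda\le\norm{\Sigma_0}_{\op}=\lambda_1$ split the index set at $i^\star\asymp\tau^{-1}\log(1/\lambda)$; for $i\le i^\star$ we have $\lambda_i\gtrsim\lambda$ so each summand $\frac{\lambda_i}{\lambda_i+\lambda}$ and its square is $\Theta(1)$, while for $i>i^\star$ the summands are at most $\lambda_i/\lambda\lesssim e^{-\tau(i-i^\star)}$ and form a convergent geometric tail contributing $O(1)$. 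Hence $\Nol\asymp\tau^{-1}\log(1/\lambda)$ and $\Ntl\asymp\sqrt{\tau^{-1}\log(1/\lambda)}$.

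Next, I substitute $\lambda=d_\theta\Delta_n^{1/(2\tilde\theta)}$, so $\log(1/\lambda)=\frac{1}{2\tilde\theta}\log(1/\Delta_n)-\log d_\theta$. Provisionally assuming $\Delta_n\gtrsim n^{-2}$ (to be confirmed a posteriori) gives $\log(1/\Delta_n)\asymp\log n$, hence $\Nol\asymp\frac{1}{2\tilde\theta\tau}\log n$ and $\Ntl\asymp\big(\frac{1}{2\tilde\theta\tau}\log n\big)^{1/2}$ for $n$ large; this is the origin of the $\max\{\sqrt{1/(2\tilde\theta)},1\}$ (resp.\ $\max\{\sqrt{1/(2\tilde\theta)},1/(2\tilde\theta),1\}$) prefactors in $c(\alpha,\delta,\theta)$, the $\max$ with $1$ covering $\tilde\theta\ge\frac12$. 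Feeding $\Ntl\asymp\sqrt{\tfrac{\log n}{2\tilde\theta\tau}}$ into the two conditions of Theorem~\ref{thm: permutations typeII}: the second, $\Delta_n/\Ntl\gtrsim\delta^{-1}\log(1/\tilde\alpha)/n$, forces $\Delta_n\gtrsim\sqrt{1/(2\tilde\theta\tau)}\,\delta^{-1}\log(1/\tilde\alpha)\sqrt{\log n}/n$, while the first, $\Delta_n^{(2\tilde\theta+1)/(2\tilde\theta)}/\Ntl\gtrsim d_\theta^{-1}(\delta^{-1}\log(1/\tilde\alpha))^2/n^2$, forces $\Delta_n\gtrsim(\cdots)\big(\sqrt{\log n}/n^2\big)^{2\tilde\theta/(2\tilde\theta+1)}=(\cdots)(\log n)^{\tilde\theta/(2\tilde\theta+1)}/n^{4\tilde\theta/(2\tilde\theta+1)}$. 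Comparing the powers of $1/n$: for $\tilde\theta>\frac12$ one has $\frac{4\tilde\theta}{2\tilde\theta+1}>1$, so the second inequality dominates (and the first is then automatically satisfied since $(\log n)^{\tilde\theta/(2\tilde\theta+1)}\le\sqrt{\log n}$), giving $\Delta_n\asymp c(\alpha,\delta,\theta)\sqrt{\log n}/n$; for $\tilde\theta\le\frac12$ the first dominates, and since $(\log n)^{\tilde\theta/(2\tilde\theta+1)}\le(\log n)^{2\tilde\theta/(2\tilde\theta+1)}$ for $n\ge e$, the stated rate $\Delta_n=c(\alpha,\delta,\theta)\big(\sqrt{\log n}/n\big)^{4\tilde\theta/(2\tilde\theta+1)}$ suffices. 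For the uniformly bounded case, the first condition instead reads $\Delta_n/\Nol\gtrsim(\delta^{-1}\log(1/\tilde\alpha))^2/n^2$, i.e.\ $\Delta_n\gtrsim\delta^{-2}(\log(1/\tilde\alpha))^2\,\frac{\log n}{n^2}$, which is dominated by the unchanged second condition ($\log n/n^2\ll\sqrt{\log n}/n$) for every $\theta>0$, yielding $\Delta_n\asymp c(\alpha,\delta,\theta)\sqrt{\log n}/n$.

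Finally I would close the loop: for the $\Delta_n$ just obtained, verify $\Delta_n\gtrsim n^{-2}$ (so the reduction $\log(1/\Delta_n)\asymp\log n$ used above is legitimate), $\lambda=d_\theta\Delta_n^{1/(2\tilde\theta)}\le\norm{\Sigma_0}_{\op}$ (both hold once $n$ exceeds a threshold $k_{\alpha,\delta}$, since $\Delta_n\to0$), the sample requirement $n\ge d_3\delta^{-1/2}\log(1/\alpha)$, the permutation budget $B\ge3\alpha^{-2}(\log2\delta^{-1}+\alpha(1-\alpha))$, and the condition on $s$, which with $\lambda^{-1}\asymp\Delta_n^{-1/(2\tilde\theta)}$ polynomial in $n$ and a $\log(1/\lambda)\asymp\log n$ factor is mild (as in Remark~\ref{rem:srct}, and taking $\log(1/\tilde\alpha)\asymp\log(1/\alpha)$). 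Theorem~\ref{thm: permutations typeII} then gives $\inf_{P\in\PP}P_{H_1}\{\stat\ge\qqh\}\ge1-5\delta$; relabelling $\delta\mapsto2\delta/5$ and absorbing the factor into the constants yields the stated $1-2\delta$. The computation has no genuine obstacle: the only delicate point is the self-referential substitution $\lambda=\lambda(\Delta_n)$ — one must guess the order of $\Delta_n$, verify it against both inequalities (and the upper constraint $\lambda\le\norm{\Sigma_0}_{\op}$), and check the $\log(1/\lambda)\asymp\log n$ step is self-consistent — together with carefully tracking how $\tilde\theta$, $\tau$, $\delta$ and $\alpha$ enter the constant $c(\alpha,\delta,\theta)$.
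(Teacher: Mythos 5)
Your proposal is correct and follows essentially the same route as the paper: bound $\Nol\asymp\log(1/\lambda)$ and $\Ntl\asymp\sqrt{\log(1/\lambda)}$ under exponential eigenvalue decay (the paper cites this from an external lemma rather than deriving it by splitting the sum), substitute $\lambda=d_\theta\Delta_n^{1/(2\tilde\theta)}$ into the two sufficient conditions of Theorem~\ref{thm: permutations typeII}, and compare the resulting exponents of $n$ across the two regimes of $\tilde\theta$. Your extra self-consistency checks and the relabelling $\delta\mapsto 2\delta/5$ to match the stated confidence level are fine and, if anything, more careful than the paper's terse argument.
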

The above results demonstrate the minimax optimality w.r.t.~$\mathcal{P}$ of the permutation-based test constructed in 
Theorem~\ref{thm: permutations typeI}. Since the conditions on $s$ in Theorem~\ref{thm: permutations typeII} match those of Theorem~\ref{thm:typII-Gamma}, the discussion in Remark~\ref{rem:srct}(ii) also applies for SRPT.
\begin{rem}\label{rem:two-sample}
Recently, \cite{twosampletest} proposed a spectral regularized two-sample test ( \emph{SR2T}) where the test statistic has a close resemblance to $\stat$. Since we are solving a goodness-of-fit test question as a two-sample test, one could simply address it using \emph{SR2T}, and therefore one may wonder about the need for the proposal of \emph{SRCT} and \emph{SRPT}, given their similarity to \emph{SR2T}. While this is a valid question, comparing Corollaries~\ref{coro-perm:poly} and \ref{coro-perm:exp} to that of \citep[Corollaries 6, 7]{twosampletest}, we observe that while all these tests enjoy \emph{minimax} separation rates over the same range of $\tilde{\theta}$, for the range of $\tilde{\theta}$ where the minimaxity of separation rate is not established, the proposed tests have faster convergence rate than that of \emph{SR2T}, thereby demonstrating the advantage of the proposed tests over \emph{SR2T} (see Section \ref{subsection:comparision} for details).
\end{rem}

\subsection{Adaptation}\label{subsec:adaptation}
In the previous sections, we have discussed two ways of constructing a test based on the statistic $\stat$. 

In both these tests, the optimal $\lambda$ to achieve the minimax separation boundary depends on unknown $\theta$ and $\beta$. In this section, we construct a test based on the union (aggregation) of multiple tests constructed for different values of $\lambda$ taking values in a finite set, $\Lambda$. It turns out that the resultant test is guaranteed to be minimax optimal (up to $\log$ factors) for a wide range of $\theta$ (and $\beta$ in the case of polynomially decaying eigenvalues). The aggregation method is quite classical and we employ the technique as used in \cite{twosampletest}.

Define $\Lambda :=\{\lambda_L, 2\lambda_L, ... \,, \lambda_U\},$ where $\lambda_U=2^b\lambda_L$, for $b \in \N$ so that $|\Lambda|=b+1=1+\log_2\frac{\lambda_U}{\lambda_L}$, where $|\Lambda|$ is the cardinality of $\Lambda$. 

Let $\lambda^*$ be the optimal $\lambda$ that yields minimax optimality. The key idea is to choose $\lambda_L$ and $\lambda_U$ such that there is an element in $\Lambda$ that is close to $\lambda^*$ for any $\theta$ (and $\beta$ in the case of polynomially decaying eigenvalues). Define $v^* := \sup\{x \in \Lambda: x \leq \lambda^*\}$. Then it is easy to verify that $v^* \asymp \lambda^*$, i.e., $v^*$ is also an optimal choice for $\lambda$ that belongs to $\Lambda$, since for $\lambda_L\leq \lambda^* \leq \lambda_U$, we have $\frac{\lambda^*}{2}\leq v^* \leq \lambda^*$. Motivated by this, in Theorems~\ref{thm: adp-gamma typeI} and \ref{thm:perm adp typeI}, we construct $\alpha$-level tests that are adaptive versions of SRCT and SRPT, based on the union of corresponding tests over $\lambda \in \Lambda$ that rejects $H_0$ if one of the tests rejects $H_0$. The separation boundary of these tests are analyzed in Theorems~\ref{thm: adp-gamma TypeII} and \ref{thm:perm adp TypeII} under the polynomial and exponential decay rates of the eigenvalues of $\T$. These results show that the adaptive versions achieve the same performance (up to $\log$ factors) as that of the Oracle test, i.e., minimax optimal w.r.t.~$\mathcal{P}$ over the range of $\theta$ mentioned in Theorem~\ref{thm:minimax}, without requiring the knowledge of $\lambda^*$. In contrast, \cite[Theorem 5]{Krishna} considers an adaptive and asymptotic version of the Oracle test under $\mu_0=0$ and the uniform boundedness condition, where it only adapts over $\theta$ assuming $\beta$ is known.

The following results relate to the adaptive version of SRCT.
\begin{thm} [Critical region--adaptation--SRCT] \label{thm: adp-gamma typeI}
Suppose $(A_0)$--$(A_2)$. Then for any $\alpha>0$, 
$\frac{32c_1\K}{s}\max\{\log\frac{96\K s}{\Tilde{\alpha}},\log\frac{12}{\Tilde{\alpha}}\} \leq \lambda_L \leq \lambda_U \leq \norm{\Sigma_0}_{\op}$, where $\Tilde{\alpha} = \frac{\alpha}{|\Lambda|}$, $c_1 \geq 65$.
$$P_{H_0}\left\{\sup_{\lambda \in \Lambda}\frac{\stat}{\Ntlh} \geq \gamma \right\} \leq \alpha,$$
where $\gamma=\frac{12\Cs}{b_1\sqrt{\Tilde{\alpha}}}\left( \frac{1}{n}+\frac{1}{m}\right)$, $b_1 = \sqrt{\frac{4}{9}-\frac{16}{3\sqrt{3c_1}}-\frac{32}{9c_1}}.$ 
Furthermore if $C:=\sup_i\norm{\phi_i}_{\infty} < \infty$, the above bound holds for $4c_1C^2\mathcal{N}_{1}(\lambda_L)\log\frac{48\mathcal{N}_{1}(\lambda_L)}{\delta}\leq s$.
\end{thm}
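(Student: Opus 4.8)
The plan is to obtain the aggregated Type-I bound from the single-parameter result, Theorem~\ref{thm:typI-Gamma}, by a Bonferroni union bound over the finite grid $\Lambda$. Write $\tilde\alpha=\alpha/|\Lambda|$. For each fixed $\lambda\in\Lambda$, Theorem~\ref{thm:typI-Gamma} applied with significance level $\tilde\alpha$ in place of $\alpha$ gives $P_{H_0}\{\stat\ge\gamma_\lambda\}\le\tilde\alpha$, where its threshold is $\gamma_\lambda=\frac{12\Cs\Ntlh}{b_1\sqrt{\tilde\alpha}}(\tfrac1n+\tfrac1m)$. The point is that $\gamma_\lambda=\gamma\,\Ntlh$ with $\gamma=\frac{12\Cs}{b_1\sqrt{\tilde\alpha}}(\tfrac1n+\tfrac1m)$ as in the statement, so that (using $\Ntlh>0$ a.s.)
$$\Big\{\sup_{\lambda\in\Lambda}\tfrac{\stat}{\Ntlh}\ge\gamma\Big\}=\bigcup_{\lambda\in\Lambda}\big\{\stat\ge\gamma\,\Ntlh\big\}=\bigcup_{\lambda\in\Lambda}\big\{\stat\ge\gamma_\lambda\big\},$$
whence $P_{H_0}\{\sup_{\lambda\in\Lambda}\stat/\Ntlh\ge\gamma\}\le\sum_{\lambda\in\Lambda}P_{H_0}\{\stat\ge\gamma_\lambda\}\le|\Lambda|\,\tilde\alpha=\alpha$. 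Thus the whole argument reduces to checking that the hypotheses of Theorem~\ref{thm:typI-Gamma} are in force, at level $\tilde\alpha$, for every $\lambda\in\Lambda$.

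This verification is straightforward. Since $\Lambda\subseteq[\lambda_L,\lambda_U]$, the upper constraint $\lambda\le\norm{\Sigma_0}_{\op}$ is immediate from $\lambda_U\le\norm{\Sigma_0}_{\op}$. For the lower constraint, the quantity $\frac{4c_1\K}{s}\max\{\log\frac{96\K s}{\tilde\alpha},\log\frac{12}{\tilde\alpha}\}$ required by Theorem~\ref{thm:typI-Gamma} does not depend on $\lambda$, so it is enough that it not exceed $\lambda_L$; this holds with room to spare under the assumed bound $\frac{32c_1\K}{s}\max\{\log\frac{96\K s}{\tilde\alpha},\log\frac{12}{\tilde\alpha}\}\le\lambda_L$. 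The enlarged numerical constant (here $32c_1$ rather than $4c_1$) is taken conservatively so that the proof of Theorem~\ref{thm:typI-Gamma}—in which, on top of the Chebyshev step applied to $\stat$, one conditions on the high-probability event from Lemmas~\ref{lemma:adaptation Upper}, \ref{lemma:adaptation lower}, \ref{lemma: ntlh upper bound} and~\ref{lemma: ntlh lower bound} on which $\Ntlh\asymp\Ntl$ and $\SgLh\hat\Sigma_0\SgLh$ is comparable to $\SgL\Sigma_0\SgL$—goes through uniformly over the grid after the failure-probability budget $\alpha$ is split among the $|\Lambda|$ Chebyshev steps and the covariance-estimation events. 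The uniform-boundedness variant is identical except that one invokes the second half of Theorem~\ref{thm:typI-Gamma}, whose $s$-requirement is monotone increasing in the degrees of freedom $\mathcal N_1(\lambda)$ and hence most demanding at $\lambda=\lambda_L$; it is therefore satisfied for all $\lambda\in\Lambda$ precisely under the stated condition $4c_1C^2\mathcal N_1(\lambda_L)\log\frac{48\mathcal N_1(\lambda_L)}{\tilde\alpha}\le s$.

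The only genuinely delicate point—everything else being the bare Bonferroni argument—is the interplay between the grid and the randomness of the normalizers $\Ntlh$: the variance/Chebyshev part of Theorem~\ref{thm:typI-Gamma} is applied to each $\stat$ separately and so union-bounds cleanly, but the data-dependent threshold $\gamma\,\Ntlh$ is $\lambda$-varying, so one must ensure that the favorable covariance-estimation event underlying the $\Ntlh\asymp\Ntl$ comparison holds simultaneously for all $\lambda\in\Lambda$ with probability $1-O(\alpha)$. This is controlled because the relevant weighted deviation $\norm{\SgL(\hat\Sigma_0-\Sigma_0)\SgL}_{\op}$ is decreasing in $\lambda$, so the event for $\lambda=\lambda_L$ already implies it for every $\lambda\in\Lambda$; the bulk of the bookkeeping is checking that the stated lower bound on $\lambda_L$ (equivalently, the stated requirement on $s$) indeed places $\lambda_L$ in the regime where that single event is favorable with the required probability, which is exactly what Lemmas~\ref{lemma:adaptation Upper}--\ref{lemma: ntlh lower bound} provide. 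No new estimate beyond those lemmas and Theorem~\ref{thm:typI-Gamma} is needed.
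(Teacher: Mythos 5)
Your proof is correct and follows essentially the same route as the paper: the paper's own (terse) proof likewise observes that the event $\left\{\sup_{\lambda\in\Lambda}\stat/\Ntlh\ge\gamma\right\}$ coincides with $\bigcup_{\lambda\in\Lambda}\{\stat\ge\gamma(\tilde{\alpha},\lambda)\}$ and then invokes Theorem~\ref{thm:typI-Gamma} at level $\tilde{\alpha}$ together with a union bound (via \citealp[Lemma A.16]{twosampletest}), which is exactly your Bonferroni argument with the hypothesis checks filled in. Your final paragraph's worry about the covariance-estimation events holding simultaneously over $\Lambda$ is superfluous: each application of Theorem~\ref{thm:typI-Gamma} already absorbs its own estimation-failure probability into its $\tilde{\alpha}$ budget, so the plain per-$\lambda$ union bound from your first paragraph suffices.
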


\begin{thm} [Separation boundary--adaptation--SRCT] \label{thm: adp-gamma TypeII}
Suppose 
$(A_0)$--$(A_4)$ hold, $\Tilde{\theta}=\min(\theta,\xi)$, $\Tilde{\xi}=\max(\xi,\frac{1}{4}),$  $\sup_{\theta>0}\sup_{P \in \PP} \norm{\T^{-\theta}u}_{\Lp} < \infty$, $\norm{\Sigma_0}_{\op}\geq \lambda_U$, $\theta_l>0,$ $m \geq n$, and $0<\alpha\leq1$. Then for any $\delta >0$ and $\gamma$ defined as in Theorem \ref{thm: adp-gamma typeI},
$$\inf_{\theta \geq \theta_l}\inf_{P \in \PP} P_{H_1}\left\{\sup_{\lambda \in \Lambda} \frac{\stat}{\Ntlh} \geq \gamma \right\} \geq 1-4\delta,$$
provided one of the following cases holds:
\begin{enumerate}[label=(\roman*)]

\item $\lambda_i \asymp i^{-\beta},$ $1<\beta \leq \beta_U$, $\lambda_L =r_1 n^\frac{-4\beta_U}{1+2\beta_U}$, $\lambda_U=r_2\left(\frac{n}{\sqrt{\log n}} \right)^\frac{-2}{4\Tilde{\xi}+1}$ for $r_1,r_2>0$,   
$s \geq 32c_1\K \lambda_L^{-1}\log(\max\{17920\K^2\lambda_L^{-1},6\}\delta^{-1})$, and $$\Delta_n=c(\alpha,\delta)\max\left\{ n^\frac{-8\Tilde{\theta}\beta}{1+2\beta+4\Tilde{\theta}\beta}, \left( \frac{n}{\sqrt{\log n}}\right)^\frac{-4\Tilde{\theta}\beta}{4\Tilde{\theta}\beta+1}\right\}.$$ 

Furthermore if $C:=\sup_i \norm{\phi_i}^2_{\infty} < \infty$, then the above conditions on $\lambda_L,$ $\lambda_U$ and $s$ can be replaced by $\lambda_L =r_3 \left(\frac{n}{\sqrt{\log n}} \right)^\frac{-2\beta_U}{4\theta_l\beta_U+1}$, $\lambda_U=r_4\left(\frac{n}{\sqrt{\log n}} \right)^\frac{-2}{4\xi+1}$ for $r_3,r_4>0$,

$s\geq 32c_1C^2\mathcal{N}_{1}(\lambda_L)\log\frac{32\mathcal{N}_{1}(\lambda_L)}{\delta}$, and
$$\Delta_n=c(\alpha,\delta)\left( \frac{n}{\sqrt{\log n}}\right)^\frac{-4\Tilde{\theta}\beta}{4\Tilde{\theta}\beta+1},$$ 
where $c(\alpha,\delta)\gtrsim(\alpha^{-1/2}+\delta^{-2}).$

\item  $\lambda_i \asymp e^{-\tau i}$, $\tau>0$, $\lambda_L=r_5 \left(\frac{n}{\sqrt{\log n}}\right)^{-2},$ $ \lambda_U =r_6 \left(\frac{n}{\log n}\right)^{-1/2\tilde{\xi}}$ for some $r_5,r_6>0,$ $s \geq 32c_1\K \lambda_L^{-1}\log(\max\{17920\K^2\lambda_L^{-1},6\}\delta^{-1})$, and $$\Delta_{n} \gtrsim 
c(\alpha,\delta,\theta)\max\left\{ \left(\frac{n}{\log n}\right)^{-1},\left(\frac{ n}{\sqrt{\log n}}\right)^{-\frac{4\tilde{\theta}}{2\tilde{\theta}+1}} \right\},$$
where $c(\alpha,\delta,\theta)\gtrsim \max\left\{\sqrt{\frac{1}{2\tilde{\theta}}},1\right\}(\alpha^{-1/2}+\delta^{-2}).$ 
Furthermore if $C:=\sup_i \norm{\phi_i}^2_{\infty} < \infty$, then the above conditions on $\lambda_L,$ $\lambda_U$ and $s$ can be replaced by $\lambda_L =r_7\left(\frac{n}{\log \log n}\right)^{-1/2\theta_l},$ $\lambda_U=r_8\left(\frac{n}{\log \log n}\right)^{-1/2\xi}$ for some $r_7,r_8>0$, 
$s\geq 32c_1C^2\mathcal{N}_{1}(\lambda_L)\log\frac{32\mathcal{N}_{1}(\lambda_L)}{\delta}$ and 
 $$\Delta_{n} = c(\alpha,\delta,\theta)\frac{\log n}{n},$$
where $c(\alpha,\delta,\theta) \gtrsim \max\left\{\sqrt{\frac{1}{2\tilde{\theta}}},\frac{1}{2\tilde{\theta}},1\right\}(\alpha^{-1/2}+\delta^{-2})$.  
\end{enumerate}
\end{thm}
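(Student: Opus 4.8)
The plan is to exploit that the aggregated test rejects $H_0$ as soon as \emph{one} component test rejects, so that lower bounding its power only requires exhibiting a single well-chosen $\lambda\in\Lambda$ whose component test — run at the reduced level $\tilde\alpha=\alpha/|\Lambda|$ because of the threshold $\gamma$ prescribed in Theorem~\ref{thm: adp-gamma typeI} — has power at least $1-4\delta$. Concretely, let $\lambda^*:=d_\theta\Delta_n^{1/(2\tilde\theta)}$ be the oracle-optimal regularization parameter from Theorem~\ref{thm:typII-Gamma} and $v^*:=\sup\{x\in\Lambda:x\le\lambda^*\}$; then $\lambda^*/2\le v^*\le\lambda^*$ as soon as $\lambda_L\le\lambda^*\le\lambda_U$, and since the hypotheses of Theorem~\ref{thm:typII-Gamma} are stated with $\gtrsim$ and $\asymp$ they are insensitive to replacing $\lambda^*$ by $v^*\asymp\lambda^*$ (absorbing the factor of $2$ into $d_\theta$). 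Hence
\[
\inf_{P\in\PP}P_{H_1}\Bigl\{\sup_{\lambda\in\Lambda}\tfrac{\stat}{\Ntlh}\ge\gamma\Bigr\}\ \ge\ \inf_{P\in\PP}P_{H_1}\Bigl\{\hat{\eta}^{TS}_{v^*}\ge\gamma\,\widehat{\mathcal N}_2(v^*)\Bigr\}\ \ge\ 1-4\delta,
\]
the last step being Theorem~\ref{thm:typII-Gamma} applied at $\lambda=v^*$ with $\alpha$ replaced by $\tilde\alpha$. This reduces the proof to two tasks: (a) choosing $\lambda_L,\lambda_U$ so that $\lambda^*(\theta,\beta)\in[\lambda_L,\lambda_U]$ for every $\theta\ge\theta_l$ (and every $1<\beta\le\beta_U$ in case (i)) while $|\Lambda|=O(\log n)$; and (b) translating the two inequalities of Theorem~\ref{thm:typII-Gamma} into the stated $\Delta_n$.

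For task (a) I would first record the standard degrees-of-freedom estimates: under polynomial decay $\lambda_i\asymp i^{-\beta}$ one has $\mathcal N_1(\lambda)\asymp\lambda^{-1/\beta}$ and $\mathcal N_2(\lambda)\asymp\lambda^{-1/(2\beta)}$, and under exponential decay $\lambda_i\asymp e^{-\tau i}$ one has $\mathcal N_1(\lambda)\asymp\log(1/\lambda)$ and $\mathcal N_2(\lambda)\asymp\sqrt{\log(1/\lambda)}$. Substituting these into the inequalities of Theorem~\ref{thm:typII-Gamma}, solving for the smallest admissible $\Delta_n$, and reading off $\lambda^*=d_\theta\Delta_n^{1/(2\tilde\theta)}$, one checks $\lambda^*$ is monotone enough in $\theta$ (and $\beta$) that it suffices to verify $\lambda_L\le\lambda^*\le\lambda_U$ at the extreme parameter values — which is exactly what the prescribed endpoints $\lambda_L=r_1 n^{-4\beta_U/(1+2\beta_U)}$, $\lambda_U=r_2(n/\sqrt{\log n})^{-2/(4\tilde\xi+1)}$ (resp. their uniform-boundedness and exponential analogues, with $\tilde\xi=\max(\xi,\tfrac14)$ accounting for the $\tilde\theta>\tfrac12-\tfrac1{4\beta}>\tfrac14$ regime) are engineered to do. Since $\lambda_U/\lambda_L$ is a fixed power of $n$ up to $\log$ factors, $|\Lambda|=1+\log_2(\lambda_U/\lambda_L)\asymp\log n$, whence $\tilde\alpha^{-1/2}\asymp\sqrt{\log n}\,\alpha^{-1/2}$; this is the sole source of the extra $\sqrt{\log n}$.

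For task (b), with $\lambda=v^*\asymp d_\theta\Delta_n^{1/(2\tilde\theta)}$ and $\alpha\leftarrow\tilde\alpha$, the two conditions of Theorem~\ref{thm:typII-Gamma} become, after substituting the $\mathcal N_j$-estimates, a pair of inequalities of the form $\Delta_n^{a}\gtrsim\delta^{-2}n^{-2}$ and $\Delta_n^{b}\gtrsim(\alpha^{-1/2}\sqrt{|\Lambda|}+\delta^{-1})n^{-1}$ with exponents $a,b$ depending on $\tilde\theta,\beta$; the binding one switches at $\tilde\theta=\tfrac12-\tfrac1{4\beta}$ in the polynomial case and at $\tilde\theta=\tfrac12$ in the exponential case, which is why the stated $\Delta_n$ is the \emph{maximum} of the two candidate rates — admissible in both regimes, so the unknown $\theta$ is accommodated automatically (this parallels Corollaries~\ref{coro:poly-oracle} and~\ref{coro:exp-oracle}). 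Using $\mathcal N_1$ in place of $\mathcal N_2$ in the variance term under $\sup_i\norm{\phi_i}_\infty<\infty$ — exactly as in the second half of Theorem~\ref{thm:typII-Gamma} — removes one regime and yields the single improved rate. Finally, the $s$-condition in Theorem~\ref{thm:typII-Gamma} is decreasing in $\lambda$, so imposing it at $\lambda=\lambda_L$ guarantees it for every $v^*\in\Lambda$ (this is the role of the stated lower bounds on $s$), and it, together with the endpoint constraint $\lambda_L\le\norm{\Sigma_0}_{\op}$ and the Type-I hypotheses of Theorem~\ref{thm: adp-gamma typeI}, legitimizes the threshold $\gamma$ used here.

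The main obstacle is task (a): the endpoints $\lambda_L,\lambda_U$ must bracket $\lambda^*(\theta,\beta)$ over the entire admissible region of $(\theta,\beta)$, yet $\lambda^*$ is itself defined through $\Delta_n$, the very quantity being pinned down — so the verification is essentially a self-consistency (fixed-point) check: posit $\Delta_n$, derive $\lambda^*$, confirm $\lambda^*\in[\lambda_L,\lambda_U]$, and confirm that the two inequalities of Theorem~\ref{thm:typII-Gamma} close with that $\Delta_n$. Fitting the extreme cases — $\beta\uparrow\beta_U$ pushes $\lambda^*$ (hence $\lambda_L$) down, while $\xi$ (through $\tilde\xi$) and $\theta_l$ govern $\lambda_U$ — while keeping $|\Lambda|=O(\log n)$ so the aggregation penalty stays at the $\sqrt{\log n}$ level, is where all the care goes; once the endpoints are fixed, the remaining substitutions are routine.
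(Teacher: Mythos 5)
Your proposal is correct and follows essentially the same route as the paper's proof: fix the oracle-optimal $\lambda^*=d_\theta\Delta_n^{1/(2\tilde\theta)}$, pass to $v^*=\sup\{x\in\Lambda:x\le\lambda^*\}\asymp\lambda^*$, apply Theorem~\ref{thm:typII-Gamma} at $v^*$ with $\alpha$ replaced by $\alpha/|\Lambda|$, and verify that the prescribed $\lambda_L,\lambda_U$ bracket $\lambda^*$ uniformly over $\theta\ge\theta_l$ and $\beta\le\beta_U$ with $|\Lambda|\lesssim\log n$, which is exactly the source of the extra $\sqrt{\log n}$ (or $\log n$, $\log\log n$) factors in the stated rates.
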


The following results handle the  adapted version of SRPT, which show that the adapted test is minimax optimal w.r.t.~$\mathcal{P}$ up to a $\log\log n$ factor.

\begin{thm}[Critical region--adaptation--SRPT] \label{thm:perm adp typeI}
For any $0<\alpha\leq 1$, 
$$P_{H_0}\left\{\bigcup_{\lambda \in \Lambda}\stat \geq \hat{q}_{1-\frac{\alpha}{\cd}}^{B,\lambda} \right\} \leq \alpha.$$
\end{thm}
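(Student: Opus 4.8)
The plan is to reduce the claim to a Bonferroni (union bound) argument built on top of the single-$\lambda$ validity of SRPT already established in Theorem~\ref{thm: permutations typeI}.

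First I would set $\tilde\alpha := \alpha/\cd$ and observe that $\tilde\alpha \in (0,1]$ whenever $\alpha \in (0,1]$, since $\cd = |\Lambda| = b+1 \geq 1$. Theorem~\ref{thm: permutations typeI} holds for every level in $(0,1]$ and for every fixed $\lambda$, so applying it with level $\tilde\alpha$ gives, for each $\lambda \in \Lambda$,
$$P_{H_0}\bigl\{\stat \geq \hat{q}^{B,\lambda}_{1-\tilde\alpha}\bigr\} \leq \tilde\alpha.$$
The only structural input needed here is the one underlying Theorem~\ref{thm: permutations typeI}: under $H_0$ the pooled sample $(U_i)_{i=1}^{n+m}$ is i.i.d.\ and hence exchangeable, so that, conditionally on $(Y_i^0)_{i=1}^s$, the (randomized) quantities $\hat{\eta}^0_\lambda = \stat$ and its $B$ permuted copies $(\hat{\eta}^i_\lambda)_{i=1}^B$ are exchangeable, which yields the above bound for the Monte Carlo permutation test. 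This argument is carried out separately for each $\lambda$ and does not rely on the permutations $(\pi^i)_{i=1}^B$ being resampled independently across different $\lambda \in \Lambda$ --- the same $B$ permutations may be reused for all $\lambda$.

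Next I would invoke Boole's inequality over the finite set $\Lambda$:
$$P_{H_0}\left\{\bigcup_{\lambda \in \Lambda}\bigl\{\stat \geq \hat{q}^{B,\lambda}_{1-\tilde\alpha}\bigr\}\right\} \leq \sum_{\lambda \in \Lambda} P_{H_0}\bigl\{\stat \geq \hat{q}^{B,\lambda}_{1-\tilde\alpha}\bigr\} \leq \cd \cdot \tilde\alpha = \alpha,$$
which is exactly the asserted bound because $\hat{q}^{B,\lambda}_{1-\tilde\alpha} = \hat{q}^{B,\lambda}_{1-\alpha/\cd}$. There is no genuine obstacle here: the only subtlety one might worry about --- that the per-$\lambda$ guarantees must hold jointly --- is a non-issue, since Boole's inequality requires only that each event have a controlled marginal probability, with no independence or joint structure across $\lambda$. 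Thus the Bonferroni correction $\alpha \mapsto \alpha/\cd$ precisely offsets the union over $\cd$ tests, and the proof is complete.
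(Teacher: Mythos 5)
Your proposal is correct and is essentially the paper's own argument: the paper invokes Theorem~\ref{thm: permutations typeI} at level $\alpha/\cd$ for each fixed $\lambda$ and then aggregates via the union bound (this is what the cited Lemma~A.16 of \cite{twosampletest} encapsulates). Your added remarks on exchangeability and on reusing the same permutations across $\lambda$ are consistent with, and merely make explicit, what the paper leaves implicit.
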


\begin{thm}[Separation boundary--adaptation--SRPT] \label{thm:perm adp TypeII}
Suppose 
$(A_0)$--$(A_4)$ hold, $\Tilde{\theta}=\min(\theta,\xi)$, $\Tilde{\xi}=\max(\xi,\frac{1}{4})$,  $\sup_{\theta>0}\sup_{P \in \PP} \norm{\T^{-\theta}u}_{\Lp} < \infty$, and $m \geq n$. Then for any $\delta>0$, $B\geq \frac{3}{{\tilde{\alpha}}^2}\left(\log2\delta^{-1}+\tilde{\alpha}(1-\tilde{\alpha})\right)$, $0<\alpha\leq e^{-1}$, $\tilde{\alpha}=\frac{\alpha}{\cd}$, $\theta_l>0$, we have 

$$\inf_{\theta>\theta_l}\inf_{P \in \PP} P_{H_1}\left\{\bigcup_{\lambda \in \Lambda}\stat \geq \hat{q}_{1-\frac{\alpha}{\cd}}^{B,\lambda} \right\} \geq 1-5\delta,$$

provided one of the following cases holds:
\begin{enumerate}[label=(\roman*)]

\item $\lambda_i \asymp i^{-\beta},$ $1<\beta \leq \beta_U$, $\lambda_L =r_1\left(\frac{n}{\log \log n} \right)^\frac{-4\beta_U}{1+2\beta_U}$, $\lambda_U=r_2\left(\frac{n}{\log \log n} \right)^\frac{-2}{4\Tilde{\xi}+1}$ for $r_1,r_2>0$,   
$s \geq 32c_1\K \lambda_L^{-1}\log(\max\{17920\K^2\lambda_L^{-1},6\}\delta^{-1})$, and $$\Delta_n=c(\alpha,\delta)\max\left\{\left( \frac{n}{\log \log n}\right)^\frac{-8\Tilde{\theta}\beta}{1+2\beta+4\Tilde{\theta}\beta}, \left( \frac{n}{\log \log n}\right)^\frac{-4\Tilde{\theta}\beta}{4\Tilde{\theta}\beta+1}\right\}.$$ 

Furthermore if $C:=\sup_i \norm{\phi_i}^2_{\infty} < \infty$, then the above conditions on $\lambda_L,$ $\lambda_U$ and $s$ can be replaced by $\lambda_L =r_3 \left(\frac{n}{\log \log n} \right)^\frac{-2\beta_U}{4\theta_l\beta_U+1}$, $\lambda_U=r_4\left(\frac{n}{\log \log n} \right)^\frac{-2}{4\xi+1}$ for $r_3,r_4>0$,

$s\geq 32c_1C^2\mathcal{N}_{1}(\lambda_L)\log\frac{32\mathcal{N}_{1}(\lambda_L)}{\delta}$, and
$$\Delta_n=c(\alpha,\delta)\left( \frac{n}{\log \log n}\right)^\frac{-4\Tilde{\theta}\beta}{4\Tilde{\theta}\beta+1},$$
where $c(\alpha,\delta)\gtrsim \delta^{-2}(\log \frac{1}{\alpha})^2.$

\item  $\lambda_i \asymp e^{-\tau i}$, $\tau>0$, $\lambda_L=r_5 \left(\frac{n}{\sqrt{\log n}\log \log n}\right)^{-2},$ $ \lambda_U =r_6 \left(\frac{n}{\sqrt{\log n}\log \log n}\right)^{-1/2\tilde{\xi}}$ for some $r_5,r_6>0,$ $s \geq 32c_1\K \lambda_L^{-1}\log(\max\{17920\K^2\lambda_L^{-1},6\}\delta^{-1})$, and $$\Delta_{n} \gtrsim 
c(\alpha,\delta,\theta)\max\left\{ \left(\frac{n}{\sqrt{\log n}\log \log n}\right)^{-1},\left(\frac{ n}{\sqrt{\log n}\log \log n}\right)^{-\frac{4\tilde{\theta}}{2\tilde{\theta}+1}} \right\},$$
where $c(\alpha,\delta,\theta)\gtrsim \max\left\{\sqrt{\frac{1}{2\tilde{\theta}}},1\right\}\delta^{-2}(\log \frac{1}{\alpha})^2.$
Furthermore if $C:=\sup_i \norm{\phi_i}^2_{\infty} < \infty$, then the above conditions on $\lambda_L,$ $\lambda_U$ and $s$ can be replaced by $$\lambda_L =r_7\left(\frac{n}{\sqrt{\log n}\log\log n}\right)^{-1/2\theta_l},\,\, \lambda_U=r_8\left(\frac{n}{\sqrt{\log n}\log\log n}\right)^{-1/2\xi}$$ for some $r_7,r_8>0$, 
$s\geq 4c_1C^2\mathcal{N}_{1}(\lambda_L)\log\frac{8\mathcal{N}_{1}(\lambda_L)}{\delta}$ and 
 $$\Delta_{n} = c(\alpha,\delta,\theta)\frac{\sqrt{\log n}\log\log n}{n},$$
where $c(\alpha,\delta,\theta) \gtrsim \max\left\{\sqrt{\frac{1}{2\tilde{\theta}}},\frac{1}{2\tilde{\theta}},1\right\}\delta^{-2}(\log \frac{1}{\alpha})^2$. 
\end{enumerate}
\end{thm}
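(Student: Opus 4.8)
The plan is to reduce the aggregated test to a single, well-chosen member of the family $\{\stat\}_{\lambda\in\Lambda}$ and then apply the non-adaptive bound of Theorem~\ref{thm: permutations typeII} at the inflated level $\tilde\alpha=\alpha/\cd$. First, since the aggregated test rejects as soon as one of its components rejects,
$$P_{H_1}\Big\{\bigcup_{\lambda\in\Lambda}\stat\geq\hat{q}^{B,\lambda}_{1-\alpha/\cd}\Big\}\ \geq\ P_{H_1}\Big\{\hat{\eta}^{TS}_{v^*}\geq\hat{q}^{B,v^*}_{1-\alpha/\cd}\Big\},$$
where $v^*=\sup\{x\in\Lambda:x\leq\lambda^*\}$ and $\lambda^*$ denotes the oracle-optimal regularization parameter from Theorem~\ref{thm: permutations typeII}. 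As already noted before the statement, $\lambda_L\leq\lambda^*\leq\lambda_U$ forces $\lambda^*/2\leq v^*\leq\lambda^*$, hence $v^*\asymp\lambda^*$, so it suffices to lower bound the power of the single SRPT run at $\lambda=v^*$ and level $\tilde\alpha$.

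Next I would verify that the stated choices of $\lambda_L$ and $\lambda_U$ do bracket $\lambda^*$ for every $\theta>\theta_l$ (and, in case (i), every $1<\beta\leq\beta_U$). Since $\lambda^*=d_\theta\Delta_n^{1/2\tilde\theta}$ with $\Delta_n$ equal to the minimax rate, $\lambda^*$ is an explicit negative power of $n$ up to logarithmic corrections: its smallest admissible value, attained in the non-saturated small-$\tilde\theta$ (and, in case (i), large-$\beta$) regime, is dominated by $\lambda_L$, while its largest admissible value, attained in the saturation regime $\theta\geq\xi$ where $\tilde\theta=\xi$ and $\tilde\xi=\max(\xi,\tfrac14)$ enters, is dominated by $\lambda_U$; this is precisely what dictates the $\beta_U$- and $\tilde\xi$-dependent exponents in the definitions of $\lambda_L$ and $\lambda_U$. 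Consequently $\cd=1+\log_2(\lambda_U/\lambda_L)\asymp\log n$ in both cases, so that $\log(1/\tilde\alpha)=\log(\cd/\alpha)\lesssim\log\log n+\log(1/\alpha)$.

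With $v^*\asymp\lambda^*\in[\lambda_L,\lambda_U]$ in hand, I would apply Theorem~\ref{thm: permutations typeII} with $\lambda=v^*$ and $\alpha$ replaced by $\tilde\alpha$. Its hypotheses transfer: the assumed $s$-bound at $\lambda_L$ implies the one required at $v^*\geq\lambda_L$ by monotonicity of $\K\lambda^{-1}$ and $\mathcal{N}_{1}(\lambda)$ in $\lambda$; the assumed $B$-bound is exactly the one required with $\tilde\alpha=\alpha/\cd$; and $n\gtrsim\delta^{-1/2}\log(1/\tilde\alpha)$ holds for large $n$ since $\log(1/\tilde\alpha)\lesssim\log\log n$. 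Plugging $v^*\asymp\lambda^*$ together with the eigenvalue-decay estimates for $\mathcal{N}_{1}(\lambda)$ and $\mathcal{N}_{2}(\lambda)$ into the $\Delta_n$-conditions of Theorem~\ref{thm: permutations typeII}---the same computation as in Corollaries~\ref{coro-perm:poly} and~\ref{coro-perm:exp}, but with $n$ replaced by $n/\log\log n$ in case (i) and by $n/(\sqrt{\log n}\,\log\log n)$ in case (ii) so as to absorb the $\log(1/\tilde\alpha)$ factor---yields exactly the stated $\Delta_n$. Since the aggregated test is of level $\alpha$ by Theorem~\ref{thm:perm adp typeI}, and since $\lambda_L,\lambda_U,s$ and the bound on $\Delta_n$ were chosen to hold simultaneously for all $\theta>\theta_l$ (this is where $\sup_{\theta>0}\sup_{P\in\PP}\norm{\T^{-\theta}u}_{\Lp}<\infty$ is used), taking the infimum over $\theta>\theta_l$ and $P\in\PP$ gives power at least $1-5\delta$.

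The main obstacle is the bracketing-and-translation step: exhibiting one fixed interval $[\lambda_L,\lambda_U]$ that captures the oracle-optimal $\lambda^*(\theta,\beta)$ over the whole admissible range, and checking that the envelope of the resulting single-test separation rates is exactly the $\max\{\cdot,\cdot\}$ appearing in the statement, all while carefully tracking the extra $\log\log n$ (coming from $\cd\asymp\log n$ and the level split $\tilde\alpha=\alpha/\cd$) and, in case (ii), the additional $\sqrt{\log n}$. A secondary technical point is that the covariance-estimation error bounds---$\Ntlh$ versus $\Ntl$, and $\hat\Sigma_0$ versus $\Sigma_0$ (Lemmas~\ref{lemma: ntlh upper bound} and~\ref{lemma: ntlh lower bound}, and Lemmas~\ref{lemma:adaptation Upper} and~\ref{lemma:adaptation lower})---are only invoked at $\lambda=v^*$, so the power analysis incurs no union bound over $\Lambda$ beyond the level split already accounted for.
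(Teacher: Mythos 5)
Your proposal is correct and follows essentially the same route as the paper: the paper's proof likewise applies Theorem~\ref{thm: permutations typeII} at the inflated level $\tilde\alpha=\alpha/\cd$ (so that $\log(1/\tilde\alpha)\lesssim\log\log n+\log(1/\alpha)$ since $\cd\lesssim\log n$), computes the resulting optimal $\lambda^*$ from the conditions \eqref{eq:perm-poly-1}--\eqref{eq:perm-exp-2}, brackets it by $[\lambda_L,\lambda_U]$, and passes to $v^*=\sup\{x\in\Lambda:x\le\lambda^*\}\asymp\lambda^*$ via the union argument of \cite[Lemma A.16]{twosampletest}, exactly as in the proof of Theorem~\ref{thm: adp-gamma TypeII}. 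Your additional observations (monotonicity transferring the $s$-condition from $\lambda_L$ to $v^*$, and the matching $B$-condition) are the correct bookkeeping details.
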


The discussion so far has dealt with adapting to unknown $\theta$ and $\beta$ associated with a given kernel. The natural question is how to choose the kernel, for example, suppose the kernel is a Gaussian kernel, then what is the right choice of bandwidth? This is an important question because it is not easy to characterize the class of kernels that satisfy the range space and eigenvalue decay conditions for a given $P_0$. This question can be addressed by starting with a family of kernels, $\mathcal{K}$ and constructing an adaptive test by taking the union of tests jointly over $\lambda\in \Lambda$ and $K\in\mathcal{K}$, so that the resulting test is jointly adaptive over $\lambda$ and the kernel class $\mathcal{K}$. This idea has been explored recently in \cite[Section 4.5]{twosampletest} to construct a minimax optimal (up to a log factor) test that is jointly adaptive to both $\lambda$ and $\mathcal{K}$ ($\mathcal{K}$ is assumed to be finite). Since the same idea can be explored for SRCT and SRPT to create kernel adaptive tests that yield results that are similar to those of Theorems~\ref{thm: adp-gamma typeI}--\ref{thm:perm adp TypeII} along with their proofs, we skip the details here and encourage the reader to refer to \cite{twosampletest}.

\section{Experiments} \label{sec:experiments}

In this section, we investigate the empirical performance of the proposed regularized goodness-of-fit tests, SRCT and SRPT with adaptation to $\lambda$ and the kernel. Note that SRCT and SRPT are approximations to the Oracle test, since the latter is not easy to compute in general. In Section~\ref{subsec:oracle}, using a periodic spline kernel, we compare the performance of SRPT to the moderated MMD (M3D) test (i.e., Oracle test) of \cite{Krishna}, which requires the knowledge of the eigenvalues and eigenfunctions of the kernel with respect to $P_0$. Since SRCT and SRPT can be treated as two-sample tests, in Sections~\ref{subsec:gaussian}--\ref{subsec:directional}, we compare their performance to other popular two-sample tests in the literature such as 
adaptive MMD test (MMDAgg) \citep{MMDagg}, Energy test \citep{Energy}, Kolmogorov-Smirnov (KS) test \citep{KS,Fasano} and the spectral regularized two sample test (SR2T) proposed in \cite{twosampletest} with Showalter regularization. For these experiments we used Gaussian kernel, defined as $K(x,y)=\exp\left(-\frac{\norm{x-y}_{2}^2}{2h}\right)$, where $h$ is the bandwidth. For our tests, we construct adaptive versions by taking the union of tests jointly over $\lambda \in \Lambda$ and $h \in W$. Let $\hat{\eta}_{\lambda,h}$ be the test statistic based on $\lambda$ and bandwidth $h$. We reject $H_0$ if $\hat{\eta}_{\lambda,h} \geq \hat{q}_{1-\frac{\alpha}{\cd|W|}}^{B,\lambda,h}$ for any $(\lambda,h) \in \Lambda \times W$. We performed such a test for $\Lambda :=\{\lambda_L, 2\lambda_L, ... \,, \lambda_U\},$ and $W:=\{w_L h_m, 2w_L h_m, ... \,, w_Uh_m\}$, where $h_m:= \text{median}\{\norm{q-q'}_2^2: q,q' \in X \cup X^0\}$, $X:=(X_1,\ldots,X_n)$ and $X^0:=(X^0_1,\ldots,X^0_m)$. In our experiments, we set $\lambda_L=10^{-6}$, $\lambda_U=5,$ $w_L=0.01$ and $w_U=100.$ All tests are repeated 200 times and the average power is reported. For all experiments, we set $\alpha=0.05$. For the tests SRPT and SR2T, we set the number of permutations to $B=60$ and the number of samples used to estimate the covariance operator to $s=100$.

\subsection{Periodic spline kernel \& perturbed uniform distribution: Oracle test}\label{subsec:oracle}
In this section, we compare the power of SRPT to that of M3D \cite{Krishna}. To be able to compute the M3D test, we use the periodic spline kernel, defined as $K(x,y)=\frac{(-1)^{r -1}}{(2r) !}B_{2r}([x-y]),$ where $B_r$ is the Bernoulli polynomial and $[t]$ is the fractional part of $t$. We set $r=1$ and consider testing uniformity on the unit interval $\X=[0,1]$. Under this setting, the eigenvalues and eigenfunctions of $K$ are known explicitly (see \citealp[Section 5]{Krishna} for details) so that the test statistic can be exactly computed. We examine testing the null hypothesis of uniform distribution against perturbed uniform distribution (see \citealp[Section 5.1.1]{twosampletest} for details), where the perturbed uniform distribution is indexed by a parameter $P$ that characterizes the degree of perturbation. The larger the $P$ is, the associated distribution is closer to uniform, implying that it becomes more difficult to distinguish between the null and the alternative. Figure~\ref{fig:P-spine} shows the power of SRPT in comparison to M3D for varying sample sizes $n$. SRPT$(m=n)$ and SRPT$(m=3n)$ refer to our proposed permutation test while setting $m=n$, and $m=3n$ respectively (recall that $m$ is the number of samples from $P_0$ used to estimate the mean function $\mu_{P_0}$). We can observe that SRPT with enough samples from $P_0$ can yield power almost as good as M3D (Oracle test), while not requiring the exact eigenvalues and eigenfunctions of $\mathcal{T}$. We also observe that $s$ (the number of samples used to estimate the covariance operator, $\Sigma_0$) does not have much significance on the power and the choice of $s=100$ seems to be good enough to accurately estimate $\Sigma_0$.

Other than this experiment, unfortunately, we are not able to replicate any other experiment from \citep{Krishna} since no details about the parameter settings of the null and the alternative distributions are provided (i.e., if $P_0$ is normal, its mean and variance are not mentioned). Moreover, the exact details about the computation of the eigenvalues and eigenfunctions of $\mathcal{T}$ are not provided.

\begin{rem}

Theorem~\ref{thm:perm adp TypeII} states that choosing any $m\geq n$ should be enough to achieve the same separation boundary up to constants as the Oracle test. Using $m=3n$ as compared to $m=n$ will theoretically yield the same separation boundary in terms of $n$ but with a better constant closer to that of the Oracle test. To demonstrate this point, for the rest of the experiments, we used $m=3n.$ 
\end{rem}

\begin{figure}[t]
\centering
\includegraphics[scale=0.5]{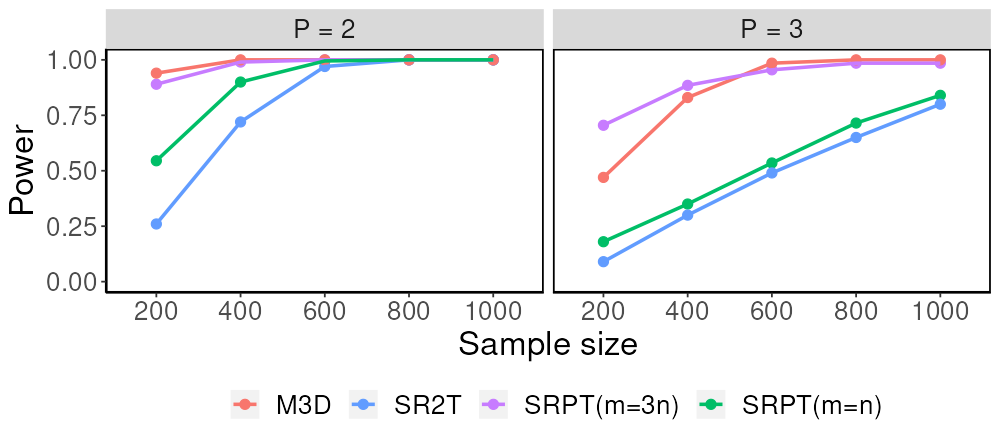}
\caption{Oracle test (M3D) and SRPT to test for uniformity using periodic spline kernel on $[0,1]$. $P$ denotes the degree of perturbation where large $P$ makes the alternative distribution (i.e., the perturbed uniform distribution) to be closer to the null (uniform distribution).}\vspace{-6mm}
\label{fig:P-spine}
\end{figure}

\subsection{Gaussian distribution}\label{subsec:gaussian}
In this section, we examine the Gaussian location shift and covariance scale problems, where the observed samples are generated from a Gaussian distribution with a shifted mean or scaled covariance matrix (by scaling the diagonal elements of the identity matrix). The goal is to test the null hypothesis of standard Gaussian distribution. Figure \ref{fig:Gaussian_shift}(a) shows the power for different mean shifts and different dimensions from which we note that the Energy test gives the best power closely followed by the SRPT test. Figure~\ref{fig:Gaussian_shift}(b) shows the result for different choices of $s$ for both SRCT and SRPT tests with Showalter regularization. We can see that SRPT is not very sensitive to the choice of $s$ as opposed to SRCT, which seems to give higher power for lower values of $s$, however with the cost of a worse Type-I error (shown at mean shift = 0). We can see that the choice of $s=100$ controls both power and Type-I error for SRCT and for this choice of $s=100$, the permutation test SRPT yields a higher power while still controlling the Type-I error. Similarly, Figure~\ref{fig:Gaussian_scale} shows the power for different scaling factors with different dimensions and different choices of $s$, demonstrating similar results.

\begin{figure}[t]
\begin{minipage}[b]{0.03\linewidth}
\end{minipage}
\begin{minipage}[b]{0.96\linewidth}
\centering
\includegraphics[scale=0.54]{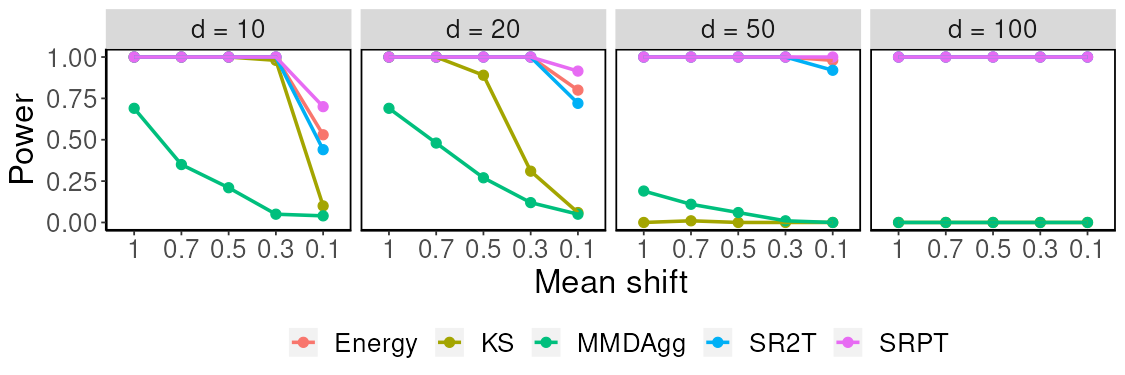}
\end{minipage}
\begin{minipage}[b]{0.03\linewidth}
\end{minipage}
\begin{minipage}[b]{0.96\linewidth}
\centering
\includegraphics[scale=0.5]{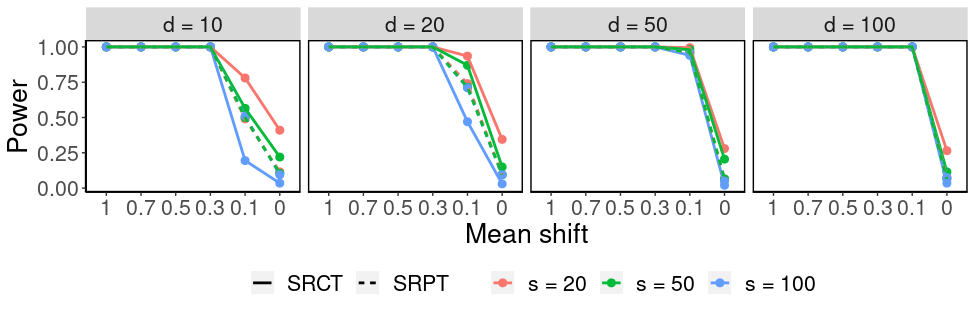}
\end{minipage}
\vspace{-4mm}
\caption{Power for Gaussian shift experiments with different $d$ and $s$ using $n=200$.}\label{fig:Gaussian_shift}
\vspace{1mm}
\end{figure}

\begin{figure}[t]
\begin{minipage}[b]{0.03\linewidth}
\end{minipage}
\begin{minipage}[b]{0.96\linewidth}
\centering
\includegraphics[scale=0.5]{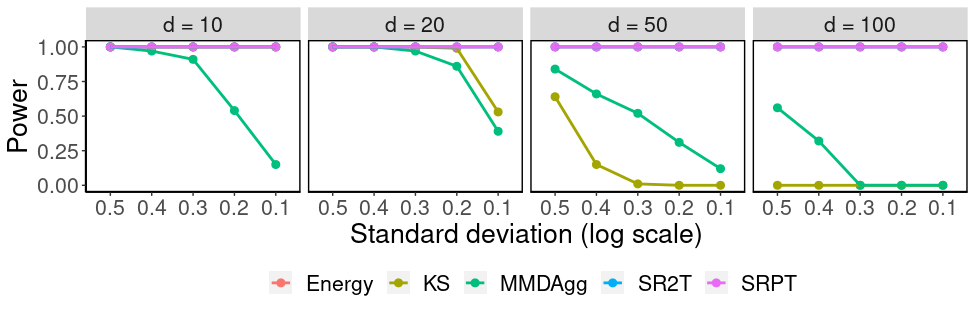}
\end{minipage}
\begin{minipage}[b]{0.03\linewidth}
\end{minipage}
\begin{minipage}[b]{0.96\linewidth}
\centering
\includegraphics[scale=0.5]{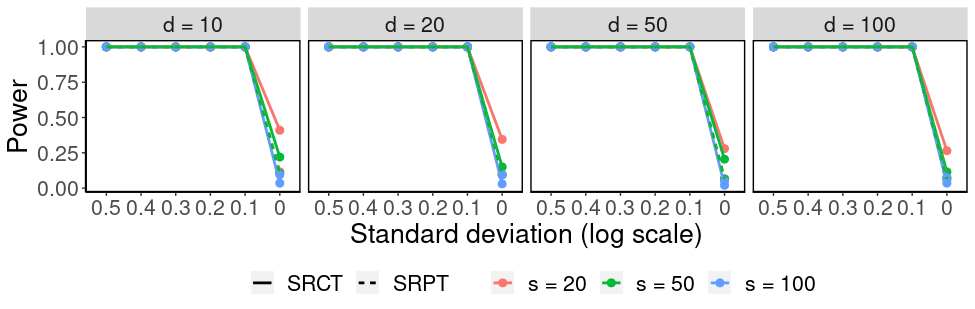}
\end{minipage}
\vspace{-4mm}
\caption{Power for Gaussian covariance scale experiments with different $d$ and $s$ using $n=200.$} \label{fig:Gaussian_scale}
\vspace{-6mm}
\end{figure}

\subsection{Perturbed uniform distribution}\label{subsec:uniform}
In this part, we examine testing the null hypothesis of uniform distribution against perturbed uniform distribution for different values of perturbation, $P$ (see \citealp[Section 5.1.1]{twosampletest} for details). Figure \ref{fig:pert_uni}(a) shows the result for $d \in \{1,2\}$ for different perturbations, wherein we can see that the highest power is achieved by SRPT. Figure \ref{fig:pert_uni}(b) shows the power for SRCT and SRPT for different choices of $s$, with $P=0$ corresponding to no perturbations and thus showing Type-I error. Similar to the observation from the previous section, SRPT is not very sensitive to the choice of $s$, while SRCT is sensitive to $s$, with $s=50$ and $s=200$ being the reasonable choices, respectively for $d=1$ and $d=2$ that controls both power and Type-I error.

\begin{figure}[t]
\begin{minipage}[b]{0.03\linewidth}
\end{minipage}
\begin{minipage}[b]{0.96\linewidth}
\centering
\includegraphics[scale=0.48]{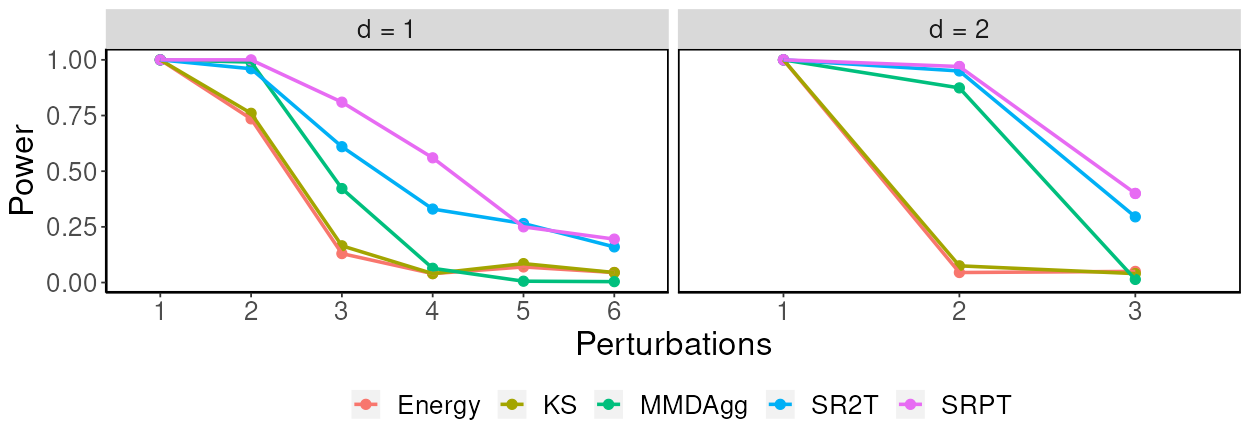}
\end{minipage}
\begin{minipage}[b]{0.03\linewidth}
\end{minipage}
\begin{minipage}[b]{0.96\linewidth}
\centering
\includegraphics[scale=0.5]{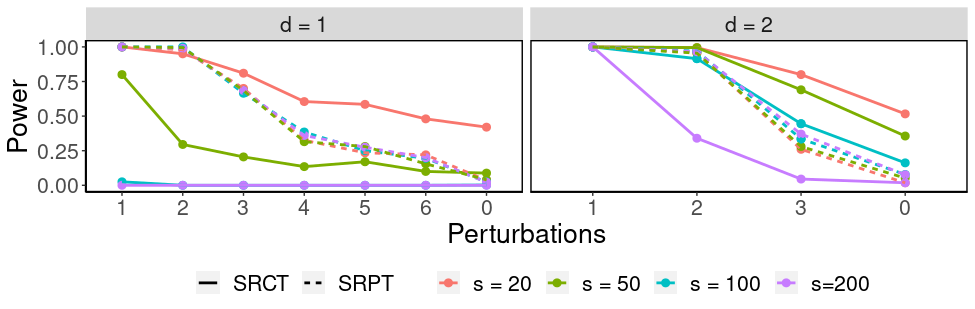}
\end{minipage}
\vspace{-4mm}
\caption{Power for perturbed uniform distributions for $d=1$ ($n=500$) and $d=2$ ($n=2000$).}\label{fig:pert_uni}
\vspace{-6mm}
\end{figure}

\subsection{Directional data}\label{subsec:directional}
In this section, we investigate two experiments involving directional domains, where we focus on testing for a multivariate von Mises-Fisher distribution, which serves as the Gaussian analog on the unit sphere defined by the density $f(x,\mu,k)=\frac{k^{d/2-1}}{2\pi^{d/2}I_{d/2-1}(k)}\exp(k\mu^Tx),\,x\in \mathbb{S}^{d-1},$ with $k\geq 0$ being the concentration parameter, $\mu$ being the mean parameter and $I$ being the modified Bessel function. Figure~\ref{fig:von-mises}(a) shows the results for testing von Mises-Fisher distribution against spherical uniform distribution ($k=0$) for different concentration parameters. We can see from Figure~\ref{fig:von-mises}(a) that the best power is achieved by the Energy test followed closely by SRPT. Figure~\ref{fig:von-mises}(b)
shows that SRPT is less sensitive to the choice of $s$ as opposed to SRCT which achieves its best power at $s=100$ while still controlling the Type-I error. In the second experiment, we explore a mixture of two multivariate Watson distributions, representing axially symmetric distributions on a sphere, given by $f(x,\mu,k)=\frac{\Gamma(d/2)}{2\pi^{d/2}M(1/2,d/2.\kappa)}\exp(k(\mu^Tx)^2)$, $x\in \mathbb{S}^{d-1},$ where $k\geq 0$ is the concentration parameter, $\mu$ is the mean parameter and $M$ is Kummer's confluent hypergeometric function. Using equal weights we drew 500 samples from a mixture of two Watson distributions with similar concentration parameter $k$ and mean parameter $\mu_1 ,\mu_2$ respectively, where $\mu_1=(1, \ldots ,1) \in \R^d$ and $\mu_2=(-1,1 \ldots ,1) \in \R^d$. Figure~\ref{fig:watson}(a) shows the power against spherical uniform distribution for different concentration parameters. We can see that SRPT outperforms all the other methods. Figure~\ref{fig:watson}(b) shows how the power and Type-I error are affected by the choice of $s$, which similar to the previous sections shows that SRPT is not very sensitive to $s$, while SRCT achieves its best power while still controlling for Type-I error at $s=100.$ 
\begin{figure}[t]
\begin{minipage}[b]{0.03\linewidth}
\end{minipage}
\begin{minipage}[b]{0.96\linewidth}
\centering
\includegraphics[scale=0.49]{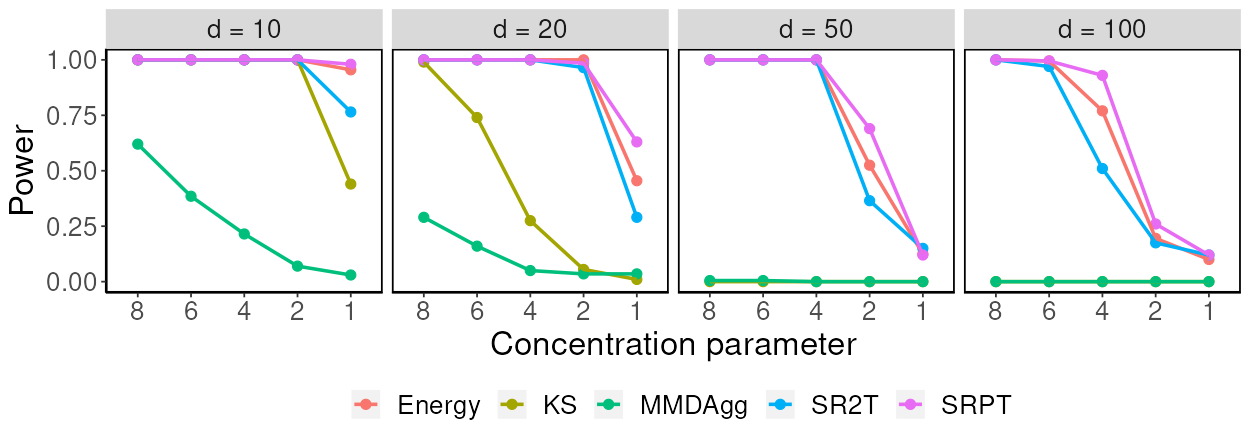}
\end{minipage}
\begin{minipage}[b]{0.03\linewidth}
\end{minipage}
\begin{minipage}[b]{0.96\linewidth}
\centering
\includegraphics[scale=0.5]{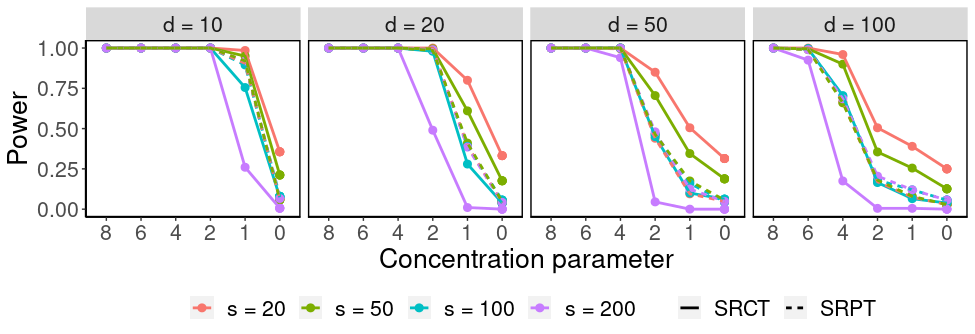}
\end{minipage}
\vspace{-4mm}
\caption{Power for von Mises-Fisher distribution with different concentration parameter $k$ and $s$ using $n=500.$} \label{fig:von-mises}
\vspace{-6mm}
\end{figure}

\begin{figure}[t]
\begin{minipage}[b]{0.03\linewidth}
\end{minipage}
\begin{minipage}[b]{0.96\linewidth}
\centering
\includegraphics[scale=0.57]{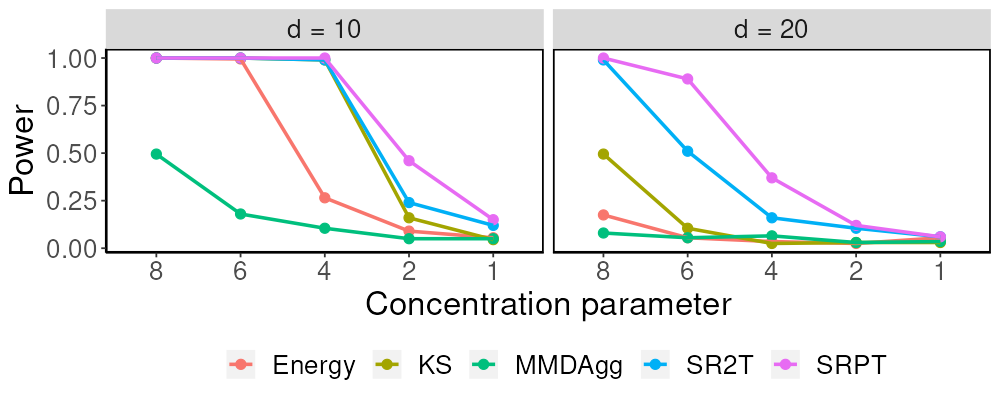}
\end{minipage}
\begin{minipage}[b]{0.03\linewidth}
\end{minipage}
\begin{minipage}[b]{0.96\linewidth}
\centering
\includegraphics[scale=0.5]{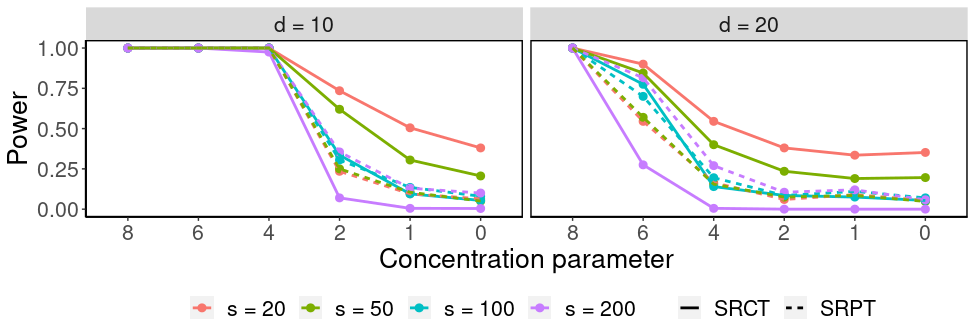}
\end{minipage}
\vspace{-4mm}
\caption{Power for mixture of Watson distributions with different concentration parameter $k$ and $s$ using $n=500.$} \label{fig:watson}
\vspace{-6mm}
\end{figure}

\section{Discussion}\label{sec:discussion}
To summarize, in this work, we have extended and generalized the theoretical properties of the Oracle test proposed by \cite{Krishna} by employing a general spectral regularization approach, wherein we obtained sufficient conditions for the separation boundary under weaker assumptions and for a wider range of alternatives. Under the assumption that we have access to samples from $P_0$, we addressed the problem of the practicality of the Oracle test by proposing two completely data-driven tests (SRCT and SRPT) that adapt to the choice of the kernel, the eigenvalue decay rate and the smoothness of the likelihood ratio, while still being minimax optimal (up to logarithmic factors) 
w.r.t.~$\PP$. Through numerical experiments, we established the superior performance of the proposed spectral regularized tests over the MMD-based test and the closely related two-sample test proposed in \cite{twosampletest}.

However, there are still some open questions for future consideration: (i) Improving the computational complexity of the proposed tests using approximation schemes like random Fourier features \citep{Rahimi-08a}, Nystr\"{o}m method (e.g., \citealt{Williams-01,Drineas-05}) or sketching~\citep{Yang-17}, and studying the computational vs.~statistical trade-off for the approximate test. (ii) The proposed test requires access to i.i.d.~samples from $P_0$, which might not be easy to generate or easily available. As an alternative, instead of regularizing w.r.t.~$\hat{\Sigma}_{P_0}$ (i.e., the empirical covariance operator estimated based on the samples from $P_0$), one can regularize with respect to $\hat{\Sigma}_P$ (i.e., the empirical covariance operator estimated from the available samples drawn from $P$). However, even this approach is not practical unless the mean element $\mu_0$ is computable, which need not be the case. To completely get around this issue, we can consider applying the idea of spectral regularization to Kernel Stein Discrepancy (KSD) \citep{chwialkowski16,liub16} which does not require computing any integrals with respect to $P_0$, and study its minimax optimality.
\section{Proofs}
In this section, we present the proofs of the main results of the paper.
\subsection{Proof of Theorem \ref{thm: MMD}}
Define $b(x)= \kk(\cdot,x)-\mu_P$ and $a(x)=b(x)+(\mu_P-\mu_0)=\kk(\cdot,x)-\mu_0$,. Thus we can write, 
\begin{align*}
    \hat{D}_{\mathrm{MMD}}^2 &= \frac{1}{n(n-1)}\sum_{i\neq j}\inner{a(X_i)}{a(X_j)}_{\h}\\
    &=\frac{1}{n(n-1)}\sum_{i\neq j}\inner{b(X_i)}{b(X_j)}_{\h}+\frac{2}{n} \sum_{i=1}^{n}\inner{b(X_i)}{(\mu_P-\mu_0)}_{\h}+D_{\mathrm{MMD}}^2 \\
&    = I_1+I_2+D_{\mathrm{MMD}}^2,
\end{align*} where 
\begin{align*}
   I_1=\frac{1}{n(n-1)}\sum_{i\neq j}\inner{b(X_i)}{b(X_j)}_{\h},\qquad\text{and}\qquad\,\,\,\,  I_2=\frac{2}{n} \sum_{i=1}^{n}\inner{b(X_i)}{(\mu_P-\mu_0)}_{\h}.
\end{align*}
so that $\E[ (\hat{D}_{\mathrm{MMD}}^2-D_{\mathrm{MMD}}^2)^2] = \E_{P}(I_1+I_2)^2 \leq 2\E_P(I_1^2)+2\E_P(I_2^2).$ Next, following similar ideas as in the proofs of \cite[Lemmas A.4, A.5]{twosampletest}, we can bound $I_1$ and $I_2$ as 
\begin{align*}
      \E\left(I_1^2\right)  \leq \frac{4}{n^2} \norm{\Sigma_P}_{\hs}^2,\qquad\text{and}\qquad
        \E\left(I_2^2 \right)
         \leq\frac{4}{n} \norm{\Sigma_P}_{\op}\norm{\mu_P-\mu_0}_{\h}^2,
\end{align*}
respectively. Combining these bounds yields that 
\begin{equation}
    \E[ (\hat{D}_{\mathrm{MMD}}^2-D_{\mathrm{MMD}}^2)^2] 
    \lesssim \frac{1}{n^2}+ \frac{D_{\mathrm{MMD}}^2}{n}. \label{Eq:MMD_var1} 
\end{equation}
When $P=P_0$, we have $D_{\mathrm{MMD}}^2=0$. Therefore under $H_0$,
\begin{equation}
\E[ (\hat{D}_{\mathrm{MMD}}^2)^2] \leq \frac{4\norm{\Sigma_{0}}_{\hs}^2}{n^2} \stackrel{(*)}{\leq} \frac{16\kappa^2}{n^2},\label{Eq:MMD_var2}
\end{equation}
where in $(*)$ we used where in $(*)$ we used $\norm{\Sigma_{0}}_{\hs}^2 \leq 4\kappa^2$. Thus using \eqref{Eq:MMD_var2} and Chebyshev's inequality yields 
$$P_{H_0}\{\hat{D}_{\mathrm{MMD}}^2 \geq \gamma\} \leq \alpha,$$
where $\gamma = \frac{4\kappa}{\sqrt{\alpha}n}.$

Next, we use the bound in \eqref{Eq:MMD_var1} to bound the power.
Let $\gamma_1= \frac{1}{\sqrt{\delta}n}+\frac{\sqrt{D_{\mathrm{MMD}}^2}}{\sqrt{\delta}\sqrt{n}}$. Then 
\begin{align*}
    P_{H_1}\{\hat{D}_{\mathrm{MMD}}^2  \geq \gamma\} & \stackrel{(*)}{\geq} P_{H_1}\{\hat{D}_{\mathrm{MMD}}^2 > D_{\mathrm{MMD}}^2-\gamma_1\} \\ &\geq P_{H_1}\{|\hat{D}_{\mathrm{MMD}}^2 - D_{\mathrm{MMD}}^2|\leq \gamma_1\} \stackrel{(**)}{\geq} 1- \delta,
\end{align*}
where $(*)$ holds when $D_{\mathrm{MMD}}^2 \geq \gamma+\gamma_1$, which is implied if $D_{\mathrm{MMD}}^2 \gtrsim \frac{1}{n}$, which in turn is implied if $\norm{u}_{\Lp}^2 \gtrsim n^{\frac{-2\theta}{2\theta+1}},$ where the last implication follows from \cite[Lemma A.19]{twosampletest}. $(**)$ follows from \eqref{Eq:MMD_var1} and an application of Chebyshev's inequality. The desired result, therefore, holds by taking infimum over $P \in \PP$.

Finally, we will show that we cannot achieve a rate better than $n^{\frac{-2\theta}{2\theta+1}}$ over $\PP$. Recall that $\T = \sum_{i \in I} \lambda_i \Tilde{\phi_i} \ltens \Tilde{\phi_i}$. Let $\bar{\phi_i} =\phi_i-\E_{P_0}\phi_i$, where $\phi_i=\frac{\id^*\Tilde{\phi_i}}{\lambda_i}$. Then $\id\bar{\phi_i}=\id\phi_i=\frac{\T\tilde{\phi}_i}{\lambda_i}=\Tilde{\phi_i}.$ Assuming $\lambda_i=h(i)$, where $h$ is an invertible, continuous function (for example $h=i^{-\beta}$ and $h=e^{-\tau i}$ correspond to polynomial and exponential decays respectively), let $k=\lfloor h^{-1}(n^{\frac{-1}{2\theta+1}})\rfloor$, hence $\lambda_k=n^{\frac{-1}{2\theta+1}}$. Define $$f:=b\bar{\phi}_k,$$ where  $b<\sqrt{\frac{4\kappa}{4\sqrt{\alpha}}}$. Then $\norm{f}_{\Lp}^2 = b^2 \lesssim 1,$ and thus $f \in \Lp$. Define 
$$\tilde{u}:=\T^{\theta}f=b\lambda_k^{\theta}\tilde{\phi}_k,\quad\text{and}\quad u:=b\lambda_k^{\theta}\bar{\phi}_k.$$ 
Note that $\E_{P_0} u = b \lambda^\theta_k\E_{P_0}\bar{\phi}_k =0$. Since $\id u=\Tilde{u}$, we have $u \in [\Tilde{u}]_\sim \in \range(\T^{\theta}),$ $\norm{u}_{\Lp}^2=b^2\lambda_k^{2\theta} > \Delta_{n}.$ Next we bound $|u(x)|$ in the following two cases.\\

\underline{\emph{Case I:}}  $\theta\ge\frac{1}{2}$ and $\sup_{i} \norm{\phi_i}_{\infty}$ \emph{is not finite.}\vspace{1.5mm}\\
Note that
$$|u(x)| =b\lambda_k^{\theta}\left|\inner{k(\cdot, x)-\mu_0}{ \phi_k}_{\h}\right|  
    \leq b\lambda_k^{\theta}\norm{k(\cdot, x)-\mu_0}_{\h}\norm{ \phi_k}_{\h}  
    \stackrel{(*)}{\leq} 2b\sqrt{\kappa} \lambda_k^{\theta-\frac{1}{2}} \stackrel{(\dag)}{\leq} 1,$$
where in $(*)$ we used $\norm{\phi_k}^2_{\h} =\lambda_k^{-2}\inner{\id^*\tilde{\phi}_k}{\id^*\tilde{\phi}_k}=\lambda_k^{-1}.$ In $(\dag)$ we used $\theta>\frac{1}{2}.$\\

\underline{\emph{Case II:}} $\sup_{i}\norm{\phi_i}_{\infty} < \infty$.\vspace{1.5mm}\\
In this case,
$$|u(x)| \leq 2b\sup_k\norm{\phi_k}_{\infty} \lambda_k^{\theta}\leq 1,$$ for $n$ large enough.  This implies that we can find $P \in \PP$ such that $\frac{dP}{dP_0}=u+1.$ Then for such $P$, we have $D^2_{\mathrm{MMD}}=\norm{\T^{1/2}u}_{\Lp}^2=b^2\lambda_k^{2\theta+1}=\frac{b^2}{n} < \frac{4 \kappa}{\sqrt{\alpha}n}=\gamma$. Therefore there exists some $\epsilon>0$ such that $D^2_{\mathrm{MMD}} < \gamma -\epsilon.$ Hence, we have
\begin{align*}
    P_{H_1}\{\hat{D}_{\mathrm{MMD}}^2 \geq \gamma\} &< P_{H_1}\{\hat{D}_{\mathrm{MMD}}^2\geq D_{\mathrm{MMD}}^2 + \epsilon\}  < P_{H_1}\{|\hat{D}_{\mathrm{MMD}}^2- D_{\mathrm{MMD}}^2| \geq \epsilon\} \\ & \stackrel{(*)}{\lesssim}\frac{1}{\epsilon^2n^2} \leq \delta, 
\end{align*}
where we used \eqref{Eq:MMD_var1} along with Chebyshev's inequality in $(*)$, and the last inequality holds for $n> \frac{1}{\epsilon \sqrt{\delta}}.$
\subsection{Proof of Theorem \ref{thm:minimax}}
As shown in \cite[Lemma G.1]{permutations}, in order to show that a separation boundary $\Delta_n$ will imply $R^*_{\Delta_{n}} \geq \delta$, it is sufficient to find a set of distributions $\{P_k\}_{k=1}^L \subset \PP,$ such that 
\begin{equation}\label{Eq:E.1}
    \E_{P_0^n}\left[\left(\frac{1}{L}\sum_{k=1}^L\frac{dP_k^n}{dP_0^n}\right)^2\right] \leq 1+4(1-\alpha-\delta)^2.
\end{equation}
Then the proof follows the same ideas as used in the proof of \cite[Theorem 2]{twosampletest} as shown briefly below. 

Recall $\T = \sum_{i \in I} \lambda_i \Tilde{\phi_i} \ltens \Tilde{\phi_i}$. Let $\bar{\phi_i} =\phi_i-\E_{P_0}\phi_i$, where $\phi_i=\frac{\id^*\Tilde{\phi_i}}{\lambda_i}$. Then $\id\bar{\phi_i}=\id\phi_i=\frac{\T\tilde{\phi}_i}{\lambda_i}=\Tilde{\phi_i}$.\vspace{.5mm}\\

\underline{\emph{Polynomial decay (Case I):}} $\lambda_i \asymp i^{-\beta}$, $\beta>1$, $\sup_{i}\norm{\phi_i}_{\infty} < \infty$ and $\theta\ge\frac{1}{4\beta}$.\vspace{1.5mm}\\
Let $$B_n=\left\lfloor\frac{\Delta_n^{-1/2\theta\beta}}{16(\sup_{i}\norm{\phi_i}_{\infty})^2}\right\rfloor,$$ $C_n=\lfloor \sqrt{B_n} \rfloor$ and $a_n=\sqrt{\frac{\Delta_n}{C_n}}$. For $k \in \{1,\dots,L\},$ define 
$$u_{n,k}:=a_N\sum_{i=1}^{B_n}\epsilon_{ki} \bar{\phi_i},$$
where $\epsilon_k := \{\epsilon_{k1}, \epsilon_{k2},\ldots, \epsilon_{kB_n}\}\in \{0,1\}^{B_n}$ such that $\sum_{i=1}^{B_n}\epsilon_{ki}=C_n$, thus $L={B_n \choose C_n}$.
Then it can be shown (see the proof of \citealt[Theorem 2]{twosampletest}) that we can find $P_k \in \PP$ such that $\frac{dP_k}{dP_0}=u_{n,k}+1,$ and that \eqref{Eq:E.1} holds for $\theta>\frac{1}{4\beta}$ when $\Delta_{n}  \leq  c(\alpha,\delta) n^{\frac{-4\theta\beta}{4\theta\beta+1}}$ for some $c(\alpha,\delta).$ \vspace{.5mm}\\

\underline{\emph{Polynomial decay (Case II):}} $\lambda_i \asymp i^{-\beta}$, $\beta>1$, $\theta\ge\frac{1}{2}$ and $\sup_{i} \norm{\phi_i}_{\infty}$ \emph{is not finite.}\vspace{1.5mm}\\
Since $\lambda_i \asymp i^{-\beta}$, $\beta>1$, there exists constants $\underbar{A}>0$ and $\bar{A}>0$ such that $\underbar{A}i^{-\beta} \leq \lambda_i \leq \bar{A} i^{-\beta}$. Let $B_n=\lfloor\left(\frac{\underbar{A}\Delta_n^{-1}}{4\K}\right)^{\frac{1}{2\theta\beta}}\rfloor$, $C_n=\lfloor \sqrt{B_n} \rfloor$ and $a_n=\sqrt{\frac{\Delta_n}{C_n}}$. Then similar to \emph{Case I,}  it can be shown that (see the proof of \citealt[Theorem 2]{twosampletest}) we can find $P_k \in \PP$ such that $\frac{dP_k}{dP_0}=u_{n,k}+1,$ and that \eqref{Eq:E.1} holds for $\theta>\frac{1}{2}$ when $\Delta_{n}  \leq  c(\alpha,\delta) n^{\frac{-4\theta\beta}{4\theta\beta+1}}$ for some $c(\alpha,\delta).$ \vspace{.5mm}\\

\underline{\emph{Exponential decay:}} $\lambda_i \asymp e^{-\tau i}$, $\tau>0$.\vspace{1.5mm}\\
Since $\lambda_i \asymp e^{-\tau i}$, $\tau>0$, there exists constants $\underbar{A}>0$ and $\bar{A}>0$ such that $\underbar{A}e^{-\tau i} \leq \lambda_i \leq \bar{A} e^{-\tau i}$. Let $B_n=\lfloor (2\tau\max\{\theta,\frac{1}{2}\})^{-1}\log(\frac{\underbar{A}}{4\kappa\Delta_n}) \rfloor$, $C_n=\lfloor \sqrt{B_n} \rfloor$ and $a_n=\sqrt{\frac{\Delta_n}{C_n}}$, where $\theta>0$. Then similar to the previous cases it can shown that \eqref{Eq:E.1} holds when $\Delta_{n}  \leq  c(\alpha,\delta,\theta)\frac{(\log n)^{b}}{n} $ for any $b<\frac{1}{2}.$ Thus the desired bound holds by taking supremum over $b < \frac{1}{2}.$
\subsection{Proof of Theorem \ref{thm:typI-oracle}}
By defining $\B : =\gSL\SL^{1/2}$ and $a(x)= \B\Sigma_{0,\lambda}^{-1/2}(\kk(\cdot,x)-\mu_0)$, we have 
$$\hat{\eta}_{\lambda}=\frac{1}{n(n-1)}\sum_{i\neq j}\inner{a(X_i)}{a(X_j)}_\h.$$ 
By replacing $\Sigma_{PQ}$ with $\Sigma_0$ in the proof of 
\cite[Lemma A.4]{twosampletest}, we have 
$$\E_{P_0}(\hat{\eta}_{\lambda}^2)\leq \frac{4}{n^2}\norm{\B}^4_{\op}\Ntlsq.$$
The result therefore follows by applying Chebyshev's inequality and noting from \cite[Lemma A.8(ii)]{twosampletest} that $\norm{\B}^2_{\op} \leq \Cs$.

\subsection{Proof of Theorem \ref{thm:typII-oracle}} 

Define $\B : =\gSL\SL^{1/2}$, $b(x)= \B\Sigma_{0,\lambda}^{-1/2}(\kk(\cdot,x)-\mu_P)$, and $a(x)=b(x)+\B\Sigma_{0,\lambda}^{-1/2}(\mu_P-\mu_0)= \B\Sigma_{0,\lambda}^{-1/2}(\kk(\cdot,x)-\mu_0)$. 
 
Thus we can write, 
\begin{align*}
    \hat{\eta}_{\lambda} &= \frac{1}{n(n-1)}\sum_{i\neq j}\inner{b(X_i)}{b(X_j)}_{\h}+\frac{2}{n} \sum_{i=1}^{n}\inner{b(X_i)}{\B\SL^{-1/2}(\mu_P-\mu_0)}_{\h}+\eta_{\lambda}\\
    &= I_1+I_2+ \eta_{\lambda},
\end{align*}
where
\begin{align*}
   I_1=\frac{1}{n(n-1)}\sum_{i\neq j}\inner{b(X_i)}{b(X_j)}_{\h},\qquad\text{and}\qquad  I_2=\frac{2}{n} \sum_{i=1}^{n}\inner{b(X_i)}{\B\SL^{-1/2}(\mu_P-\mu_0)}_{\h}.
\end{align*}
Thus, $\text{Var}_{P}[\hat{\eta}_{\lambda}]=\E_{P}(I_1+I_2)^2\le 2\E_P(I_1^2)+2\E_P(I_2^2).$ Next we bound $\E_P(I_1^2)$ and $\E_P(I_2^2)$ using \cite[Lemmas A.4 and A.5]{twosampletest} by replacing $\Sigma_{PQ}$ with $\Sigma_0$, which yields
\begin{align*}
    \text{Var}_P[\hat{\eta}_{\lambda}] &\leq \frac{8}{n^2}\norm{\B}_{\op}^4\norm{\Sigma_{0,\lambda}^{-1/2}\Sigma_P\SgL}_{\hs}^2 \\ & \qquad+ \frac{8}{n}\norm{\B}_{\op}^4\norm{\SgL\Sigma_P\SgL}_{\op}\norm{\SgL(\mu_P-\mu_0)}_{\h}^2 \\
    &\stackrel{(*)}{\leq} \frac{8}{n^2}\norm{\B}_{\op}^4(4 \Cl \norm{u}_{\Lp}^2  + 2\Ntlsq) \\
    & \qquad+\frac{8}{n}\norm{\B}_{\op}^4(2 \sqrt{\Cl} \norm{u}_{\Lp} + 1)\norm{u}_{\Lp}^2,
\end{align*}
where $\Cl$ as defined in Lemma \ref{lemma: bound hs and op} and in $(*)$, we used Lemmas \ref{lemma: bounds for eta} and \ref{lemma: bound hs and op}. Then we can easily deduce by using Chebyshev's inequality that $P_{H_1}\{\hat{\eta}_{\lambda} \geq \gamma\} \geq 1-\delta$ holds if $\eta_{\lambda} \geq \gamma + \sqrt{\frac{\text{Var}_{P}[\hat{\eta}_{\lambda}]}{\delta}}$. Using Lemma \ref{lemma: bounds for eta}, we have $\eta_{\lambda} \geq \frac{B_3}{4}\norm{\U}_{\Lp}^2$ under the assumptions $u \in \text{\range}(\T^{\theta})$, and 
\begin{equation}\norm{\U}_{\Lp}^2 \geq  \frac{4C_3}{3B_3} \norm{\T}_{\op}^{2\max(\theta-\xi,0)}\lambda^{2 \Tilde{\theta}} \norm{\T^{-\theta}\U}_{\Lp}^2.\label{Eq:lower}\end{equation}
Note that $u \in \text{\range}(\T^{\theta})$ is guaranteed since $P \in \PP$ and 
\begin{equation}
\norm{u}_{\Lp}^2 \geq c_4\lambda^{2 \Tilde{\theta}}\label{Eq:verify-1}  
\end{equation} guarantees \eqref{Eq:lower}
since $\norm{\T}_{\op}=\norm{\Sigma_{0}}_{\op}\leq 2\kappa$ and $c_1:=\sup_{P \in \PP} \norm{\T^{-\theta}u}_{\Lp}<\infty$, where 
$c_4=\frac{4c_1^{2}C_3(2\kappa)^{2\max(\theta-\xi,0)}}{3B_3}.$ Thus, 
\begin{equation}
    \frac{B_3}{2}\norm{u}_{\Lp}^2 \geq \gamma + \sqrt{\frac{\text{Var}_{P}[\hat{\eta}_{\lambda}]}{\delta}} \label{Eq:verify-2}
\end{equation}
guarantees $\eta_{\lambda} \geq \gamma + \sqrt{\frac{\text{Var}_{P}[\hat{\eta}_{\lambda}]}{\delta}}$. Hence it remains to verify \eqref{Eq:verify-1} and \eqref{Eq:verify-2}. Using $\norm{u}^2_{\Lp} \geq \Delta_{n}$, it is easy to see that \eqref{Eq:verify-1} is implied when $\lambda=(c_4^{-1}\Delta_{n})^{1/2\tilde{\theta}}$. Using $\norm{\B}_{\op}^4\leq \Cs^2$, which follows from \citep[Lemma A.8 \emph{(ii)}]{twosampletest} by replacing $\Sigma_{PQ}$ with $\Sigma_0$, and substituting the expressions of $\gamma$ and $\text{Var}_{P}[\hat{\eta}_{\lambda}]$ in \eqref{Eq:verify-2}, we can verify that \eqref{Eq:verify-2} is implied if $\Delta_{n} \geq \frac{r_1\Ntl}{n\sqrt{\alpha}}$, $\Delta_{n} \geq \frac{r_2\Cl}{\delta^2 n^2}$ and $\Delta_{n}\geq\frac{r_3\Ntl}{\delta n}$ for some constants $r_1,r_2,r_3>0$.  Hence the desired result follows by taking infimum over $P \in \PP$.

\subsection{Proof of Corollary \ref{coro:poly-oracle}}
When $\lambda_i \asymp i^{-\beta}$, we have $\Ntl \leq \norm{\SgL\Sigma_{0}\SgL}_{\op}^{1/2}\mathcal{N}^{1/2}_{1}(\lambda) \lesssim \lambda^{-1/2\beta}$ (see \citealt[Lemma B.9]{kpca}). Using this bound  in the conditions mentioned in Theorem~\ref{thm:typII-oracle}, ensures that these conditions on the separation boundary hold if 
\begin{equation} \label{eq:oracle-poly-1}
\Delta_{n} \gtrsim \max\left\{\left(\frac{n}{\alpha^{-1/2}+\delta^{-1}}\right)^{-\frac{4\Tilde{\theta}\beta}{4\Tilde{\theta}\beta+1}}, (\delta n)^{-\frac{8\Tilde{\theta}\beta}{4\Tilde{\theta}\beta+2\beta+1}}\right\},
\end{equation}
which in turn is implied if 
$$\Delta_{n} =
\left\{
	\begin{array}{ll}
		c(\alpha,\delta)n^{\frac{-4\tilde{\theta}\beta}{4\Tilde{\theta}\beta+1}},  &  \ \  \Tilde{\theta}> \frac{1}{2}-\frac{1}{4\beta} \\
		c(\alpha,\delta) n^{-\frac{8\Tilde{\theta}\beta}{4\Tilde{\theta}\beta+2\beta+1}}, & \ \  \Tilde{\theta} \leq \frac{1}{2}-\frac{1}{4\beta}
	\end{array}
\right.,$$ 
where $c(\alpha,\delta)\gtrsim(\alpha^{-1/2}+\delta^{-2})$ and we used that $\tilde{\theta}> \frac{1}{2}-\frac{1}{4\beta} \Leftrightarrow \frac{4\tilde{\theta}\beta}{4\tilde{\theta}\beta+1} <  \frac{8\tilde{\theta}\beta}{4\tilde{\theta}\beta+2\beta+1}.$ On the other hand when $C:=\sup_i\norm{\phi_i}_{\infty} < \infty$, we obtain the corresponding condition as 
\begin{equation} \label{eq:oracle-poly-2}
\Delta_{n} \gtrsim \max\left\{\left(\frac{n}{\alpha^{-1/2}+\delta^{-1}}\right)^{-\frac{4\Tilde{\theta}\beta}{4\Tilde{\theta}\beta+1}},(\delta n)^{-\frac{4\Tilde{\theta}\beta}{2\Tilde{\theta}\beta+1}}\right\},    
\end{equation}
which is implied if 
$$\Delta_n = c(\alpha,\delta)n^{\frac{-4\tilde{\theta}\beta}{4\Tilde{\theta}\beta+1}}.$$

\subsection{Proof of Corollary \ref{coro:exp-oracle}}
When $\lambda_i \asymp e^{-\tau i}$, we have $\Ntl \leq \norm{\SgL\Sigma_{0}\SgL}_{\op}^{1/2}\mathcal{N}^{1/2}_{1}(\lambda) \lesssim \sqrt{\log\frac{1}{\lambda}}$ (see \citealt[Lemma B.9]{kpca}). Thus, substituting this in the conditions from Theorem \ref{thm:typII-oracle} and using $n \geq \max\{e^2,\alpha^{-1/2}+\delta^{-1}\}$, we can write the separation boundary as 
\begin{equation} \label{eq:oracle-exp-1}
    \Delta_{n} \gtrsim 
\max\left\{ \left(\frac{\sqrt{2\tilde{\theta}}(\alpha^{-1/2}+\delta^{-1})^{-1}n}{\sqrt{\log n}}\right)^{-1},\left(\frac{\delta n}{\sqrt{\log n}}\right)^{-\frac{4\tilde{\theta}}{2\tilde{\theta}+1}} \right\},
\end{equation}
which is implied if 
$$\Delta_{n} =
\left\{
	\begin{array}{ll}
		c(\alpha,\delta, \theta)\frac{\sqrt{\log n}}{n}, &  \ \ \Tilde{\theta}> \frac{1}{2} \\
		c(\alpha,\delta,\theta)\left(\frac{\sqrt{\log n}}{n}\right)^{\frac{4\tilde{\theta}}{2\tilde{\theta}+1}}, & \ \ \Tilde{\theta} \le \frac{1}{2}
	\end{array}
\right.,$$
where $c(\alpha,\delta,\theta)\gtrsim \max\left\{\sqrt{\frac{1}{2\tilde{\theta}}},1\right\}(\alpha^{-1/2}+\delta^{-2})$ and we used that $\tilde{\theta}> \frac{1}{2} \Leftrightarrow 1 <  \frac{4\tilde{\theta}}{2\tilde{\theta}+1}.$

On the other hand when $C:=\sup_i\norm{\phi_i}_{\infty} < \infty$, we obtain
\begin{equation}
    \Delta_{n} \gtrsim 
\max\left\{ \sqrt{\frac{1}{2\tilde{\theta}}}\left(\frac{(\alpha^{-1/2}+\delta^{-1})^{-1}n}{\sqrt{\log n}}\right)^{-1},\frac{1}{2\tilde{\theta}}\left(\frac{\delta n}{\sqrt{\log n}}\right)^{-2}\right\},\nonumber
\end{equation}
which in turn is implied if 
$$\Delta_{n} = c(\alpha,\delta,\theta)\frac{\sqrt{\log n}}{n},$$
where $c(\alpha,\delta,\theta) \gtrsim \max\left\{\sqrt{\frac{1}{2\tilde{\theta}}},\frac{1}{2\tilde{\theta}},1\right\}(\alpha^{-1/2}+\delta^{-2})$. 

\subsection{Proof of Theorem \ref{thm: computation}}
The proof is exactly similar to that of \citep[Theorem 3]{twosampletest} by replacing $\Sigma_{PQ}$ with $\Sigma_0.$

\subsection{Proof of Theorem~\ref{thm:typI-Gamma}}
Since  $\E(\stat | (Y^0_i)_{i=1}^{s} )=0$, an application of Chebyshev's inequality via Lemma \ref{Lemma: bounding expectations} yields,  
\begin{equation*} 
        P_{H_0}\left\{|\stat| \geq \frac{\sqrt{6}\Cs\norm{\M}_{\op}^{2}\Ntl}{\sqrt{\delta}}\left(\frac{1}{n}+\frac{1}{m}\right) \Big| (Y^0_i)_{i=1}^{s}\right\} \leq \delta.
\end{equation*} 
Let $\gamma_1 : = \frac{2\sqrt{6}\Cs\Ntl}{\sqrt{\delta}}\left(\frac{1}{n}+\frac{1}{m}\right)$, and $\gamma_2 : = \frac{\sqrt{6}\Cs\norm{\M}_{\op}^{2}\Ntl}{\sqrt{\delta}}\left(\frac{1}{n}+\frac{1}{m}\right).$ Then 

\begin{align*}
  P_{H_0}\{\stat \leq \gamma_1\} & \geq P_{H_0}\{ \{\stat \leq \gamma_2\} \ \cap \{\gamma_2 \leq \gamma_1\}\} \\
  & \geq 1- P_{H_0}\{\stat \geq \gamma_2\} - P_{H_0}\{\gamma_2 \geq \gamma_1\} 
  \stackrel{(*)}{\geq} 1-3\delta,
\end{align*}
where $(*)$ follows using
\begin{align*}
P_{H_0}\{\stat \geq \gamma_2\} \leq P_{H_0}\{|\stat| \geq \gamma_2\} = \E_{P_0^m}\left[P_{H_0}\{|\stat| \geq \gamma_2| (Y^0_i)_{i=1}^{s}\}\right] \leq \delta,
\end{align*}
and 
\begin{align*}
    P_{H_0}\{\gamma_2 \geq \gamma_1\} = P_{H_0}\{\norm{\M}_{\op}^2 \geq 2\} \stackrel{(\dag)}{\leq} 2\delta, 
\end{align*}
where $(\dag)$ follows from \cite[Lemma B.2\emph{(ii)}]{kpca}, under the condition that $\frac{140\K}{s}\log \frac{16\K s}{\delta} \leq \lambda \leq \norm{\Sigma_0}_{\op}$. When $C:=\sup_i\norm{\phi_i}_{\infty} < \infty$, using \cite[Lemma A.17]{twosampletest}, we can obtain an improved condition on $\lambda$ satisfying $136C^2\Nol\log\frac{8\Nol}{\delta} \leq s$ and $\lambda \leq \norm{\Sigma_{0}}_{\op}.$ Thus setting $\delta = \frac{\alpha}{6},$ yields that 
\begin{equation} \label{Eq:gamma1}
    P_{H_0}\left\{\stat \geq \frac{12\Cs\Ntl}{\sqrt{\alpha}}\left( \frac{1}{n}+\frac{1}{m}\right)\right\} \leq \frac{\alpha}{2}.
\end{equation}
Finally, the desired result follows by writing
\begin{align*}
    &P_{H_0}\left\{\stat \leq \frac{12\Cs\Ntlh}{b_1\sqrt{\alpha}}\left( \frac{1}{n}+\frac{1}{m}\right)\right\} \\ 
    & \geq P_{H_0}\left\{\left\{\stat \leq \frac{12\Cs\Ntl}{\sqrt{\alpha}}\left( \frac{1}{n}+\frac{1}{m}\right)\right\} \ \cap \{\Ntlh \geq b_1 \Ntl\}\right\}\\
    & \geq 1-P_{H_0}\left\{\stat \geq \frac{12\Cs\Ntl}{\sqrt{\alpha}}\left( \frac{1}{n}+\frac{1}{m}\right)\right\} - P_{H_0}\{\Ntlh \leq b_1 \Ntl\} \\
    & \stackrel{(\ddag)}{\geq} 1- \alpha,
\end{align*}
where $(\ddag)$ follows using \eqref{Eq:gamma1} and Lemma \ref{lemma: ntlh lower bound} under the condition that $$\frac{4c_1\K}{s}\max\{\log\frac{96\K s}{\alpha},\log\frac{12}{\alpha}\} \leq \lambda \leq \norm{\Sigma_0}_{\op}.$$ The above condition can be replaced with $4c_1C^2\Nol\log\frac{48\Nol}{\alpha}\leq s$ if $C:=\sup_i\norm{\phi_i}_{\infty} < \infty$.

\subsection{Proof of Theorem \ref{thm:typII-Gamma}}
Let $\M = \SLh^{-1/2}\SL^{1/2}$ , $\gamma_1 =\frac{1}{\sqrt{\delta}} \left(\frac{\sqrt{\Cl} \norm{\U}_{\Lp}+\Ntl}{n}+\frac{\Cl^{1/4}\norm{\U}_{\Lp}^{3/2}+\norm{\U}_{\Lp}}{\sqrt{n}}\right),$ where $\Cl$ is defined in Lemma \ref{lemma: bound hs and op}. Then Lemma \ref{Lemma: bounding expectations} implies $\tilde{C}\norm{\M}_{\op}^2\gamma_1\geq\sqrt{\frac{\text{Var}(\stat|(Y_i^0)_{i=1}^s)}{\delta}}$ for some constant $\Tilde{C}>0$. By \citep[Lemma A.1]{twosampletest}, if \begin{equation}P\left\{\gamma \geq \zeta - \tilde{C}\norm{\M}_{\op}^2\gamma_1\right\} \leq \delta,\label{Eq:gamma2}\end{equation} for any $P \in \PP$, then we obtain $P\{\stat \geq \gamma\} \geq 1-2\delta.$ The result follows by taking the infimum over $P \in \PP$. Therefore, it remains to verify \eqref{Eq:gamma2}, which we do below. Define $d_{\lambda} : = \Ntl+\left(\frac{\sqrt{2}}{c_1}+\frac{1}{\sqrt{2c_1}}\right) \sqrt{\Nol},$ $\gamma_3 :=\frac{12d_{\lambda}\Cs}{b_1\sqrt{\alpha}}\left(\frac{1}{n}+\frac{1}{m}\right)$ and $c_2 :=B_3C_4 \Cs^{-1}$. Consider
\begin{align*}
    &P_{H_1}\left\{\gamma \leq \zeta - \Tilde{C}\norm{\M}_{\op}^2\gamma_1\right\} \\
    &\stackrel{(**)}{\geq} P_{H_1}\left\{\left\{\norm{\M}_{\op}^2\gamma_3 \leq c_2\norm{\M^{-1}}^{-2}_{\op} \norm{u}_{\Lp}^2 - \Tilde{C}\norm{\M}_{\op}^2\gamma_1\right\}\right.\\
    &\qquad\qquad\qquad\left.\cap \left\{\gamma \leq \norm{\M}_{\op}^2\gamma_3 \right\}\right\}\\ 
    & \geq  1- P_{H_1}\left\{\frac{\norm{\M}^2_{\op}\norm{\M^{-1}}^{2}_{\op}(\Tilde{C}\gamma_1+\gamma_3)}{c_2 \norm{u}_{\Lp}^2 }\geq 1\right\} - P_{H_1}\left\{\gamma \geq \norm{\M}_{\op}^2\gamma_3\right\}\\
    &\stackrel{(*)}{\geq} P_{H_1}\left\{\frac{\norm{\M}^2_{\op}\norm{\M^{-1}}^{2}_{\op}(\Tilde{C}\gamma_1+\gamma_3)}{c_2 \norm{u}_{\Lp}^2 }\leq 1\right\} -\delta \\
    & \stackrel{(\dag)}{\geq}1- P\left\{\left\{\norm{\M^{-1}}^{2}_{\op} \leq \frac{3}{2}\right\} \ \cap \left\{\norm{\M}^{2}_{\op} \leq 2\right\}\right\}-\delta\\
    & \geq 1-P\left\{\norm{\M^{-1}}^{2}_{\op} \geq \frac{3}{2}\right\} - P\left\{\norm{\M}^{2}_{\op} \geq 2\right\}-\delta \\
    & \stackrel{(\ddag)}{\geq} 1-2\delta,
\end{align*}
where $(**)$ follows by using $\zeta \geq  c_2 \norm{\M^{-1}}^{-2}_{\op} \norm{u}_{\Lp}^2$, which is obtained by combining (\citealt[Lemma A.11]{twosampletest} by replacing $\Sigma_{PQ}$ and $\mu_Q$ with $\Sigma_0$ and $\mu_0$, respectively) with Lemma \ref{lemma: bounds for eta}  under the assumptions of $u \in \text{\range}(\T^{\theta})$, and \eqref{Eq:lower}. Note that $u \in \text{\range}(\T^{\theta})$ is guaranteed since $P \in \PP$ and \eqref{Eq:verify-1} guarantees \eqref{Eq:lower} as discussed in the proof of Theorem \ref{thm:typII-oracle}. $(*)$ follows by Lemma \ref{lemma: ntlh upper bound} under the condition $s \geq 32c_1\K \lambda^{-1}\log(\max\{17920\K^2\lambda^{-1},6\}\delta^{-1})$ and $\norm{\Sigma_0}_{\op} \geq \lambda$ (when $C:=\sup_i\norm{\phi_i}_{\infty} < \infty$, the condition can be replaced by $s\geq 32c_1C^2\Nol\log\frac{6}{\delta}$). $(\dag)$ follows when
\begin{equation} 
 \norm{u}_{\Lp}^2 \geq \frac{3(\Tilde{C}\gamma_1+\gamma_3)}{c_2}.  
\label{eq:ver1}
\end{equation}
$(\ddag)$ follows from \citep[Lemma B.2\emph{(ii)}]{kpca} under the assumption that 
\begin{equation} 
    \frac{140\K}{s}\log \frac{64\K s}{\delta} \leq \lambda \leq \norm{\Sigma_0}_{\op},
  \label{eq:ver2}  
\end{equation}
which is implied by $s \geq 280\K \lambda^{-1}\log(17920\K^2\lambda^{-1}\delta^{-1}).$
When $C:=\sup_i\norm{\phi_i}_{\infty} < \infty$, $(\ddag)$ follows from \citep[Lemma A.17]{twosampletest} by replacing \eqref{eq:ver2} with 
\begin{equation}
136C^2\Nol\log\frac{32\Nol}{\delta} \leq s, \qquad \text{and}\qquad\lambda \leq \norm{\Sigma_{0}}_{\op}.\label{eq:ver2-2}
\end{equation}
Thus it remains only to verify \eqref{eq:ver1}. Using $m\geq n$ and $$\Ntl \leq \norm{\SgL\Sigma_{0}\SgL}_{\op}^{1/2}\mathcal{N}^{1/2}_{1}(\lambda),$$ it can be checked that \eqref{eq:ver1} is implied by $\Delta_{n} \geq \frac{r_1\mathcal{N}^{1/2}_{1}(\lambda)}{n\sqrt{\alpha}}$, $\Delta_{n} \geq \frac{r_2\Cl}{\delta^2 n^2}$ and $\Delta_{n}\geq\frac{r_3\mathcal{N}^{1/2}_{1}(\lambda)}{\delta n}$ for some constants $r_1,r_2,r_3>0$.

\subsection{Proof of Theorem \ref{thm: permutations typeI}}
The proof is exactly similar to that of \cite[Theorem 8]{twosampletest}.

\subsection{Proof of Theorem \ref{thm: permutations typeII}}
Let $\M = \SLh^{-1/2}\SL^{1/2}$ and $\gamma_1 =\frac{1}{\sqrt{\delta}} \left(\frac{\sqrt{\Cl} \norm{\U}_{\Lp}+\Ntl}{n}+\frac{\Cl^{1/4}\norm{\U}_{\Lp}^{3/2}+\norm{\U}_{\Lp}}{\sqrt{n}}\right).$
Then following the proof of Theorem 9 in  \citep{twosampletest}, Lemma \ref{lemma:bound quantile} along with $m\geq n$ yields that the power will be controlled to the desired level when $\Delta_{n} \geq \frac{r_1\Cl(\log(1/\alpha))^2}{\delta^2n^2}$ and $\Delta_{n}\geq\frac{r_2\Ntl\log(1/\alpha)}{\delta n}$ for some constants $r_1,r_2>0$, and under the condition \eqref{eq:ver2} which can be replaced by \eqref{eq:ver2-2} when $C:=\sup_i\norm{\phi_i}_{\infty} < \infty$.

\subsection{Proof of Corollary \ref{coro-perm:poly}}
The proof is similar to that of Corollary~\ref{coro:poly-oracle}. Since $\lambda_i \asymp i^{-\beta}$, we have $\Ntl 
\lesssim \lambda^{-1/2\beta}.$ By using this bound in the conditions of Theorem \ref{thm: permutations typeII}, we obtain that the conditions on $\Delta_{n}$ hold if 

\begin{equation} \label{eq:perm-poly-1}
  \Delta_{n} \gtrsim \max\left\{\left(\frac{\delta n}{\log(1/\alpha)}\right)^{-\frac{4\Tilde{\theta}\beta}{4\Tilde{\theta}\beta+1}}, \left(\frac{\delta n}{\log(1/\alpha)}\right)^{-\frac{8\Tilde{\theta}\beta}{4\Tilde{\theta}\beta+2\beta+1}}\right\}.  
\end{equation}
By exactly using the same arguments as in the proof of Corollary~\ref{coro:poly-oracle}, it is easy to verify that the above condition on $\Delta_n$ is implied if 
$$\Delta_{n} =
\left\{
	\begin{array}{ll}
		c(\alpha,\delta)n^{\frac{-4\tilde{\theta}\beta}{4\Tilde{\theta}\beta+1}},  &  \ \  \Tilde{\theta}> \frac{1}{2}-\frac{1}{4\beta} \\
		c(\alpha,\delta) n^{-\frac{8\Tilde{\theta}\beta}{4\Tilde{\theta}\beta+2\beta+1}}, & \ \  \Tilde{\theta} \leq \frac{1}{2}-\frac{1}{4\beta}
	\end{array}
\right.,$$ 
where $c(\alpha,\delta)\gtrsim \delta^{-2}(\log \frac{1}{\alpha})^2.$ On the other hand when $C:=\sup_i\norm{\phi_i}_{\infty} < \infty$, we obtain the corresponding condition as 
\begin{equation}\Delta_{n} \gtrsim \max\left\{\left(\frac{\delta n}{\log(1/\alpha)}\right)^{-\frac{4\Tilde{\theta}\beta}{4\Tilde{\theta}\beta+1}},\left(\frac{\delta n}{\log(1/\alpha)}\right)^{-\frac{4\Tilde{\theta}\beta}{2\Tilde{\theta}\beta+1}} \right\}, \label{eq:perm-poly-2}\end{equation}
which is implied if 
$$\Delta_n = c(\alpha,\delta)n^{\frac{-4\tilde{\theta}\beta}{4\Tilde{\theta}\beta+1}}.$$

\subsection{Proof of Corollary \ref{coro-perm:exp}}
The proof is similar to that of Corollary~\ref{coro:exp-oracle}. When $\lambda_i \asymp e^{-\tau i}$, we have $\Ntl \lesssim \sqrt{\log\frac{1}{\lambda}}$. Thus substituting this in the conditions from Theorem \ref{thm: permutations typeII} and assuming that  $$n \geq \max\{e^2,\delta^{-1}(\log 1/\alpha)\},$$ we can write the separation boundary as

\begin{equation} \label{eq:perm-exp-1}
    \Delta_{n} \gtrsim 
\max\left\{ \left(\frac{\sqrt{2\tilde{\theta}}\delta n}{\log(1/\alpha)\sqrt{\log n}}\right)^{-1},\left(\frac{\delta n\log(1/\alpha)^{-1}}{\sqrt{\log n}}\right)^{-\frac{4\tilde{\theta}}{2\tilde{\theta}+1}} \right\},
\end{equation}
which is implied if $$\Delta_{n} =
\left\{
	\begin{array}{ll}
		c(\alpha,\delta, \theta)\frac{\sqrt{\log n}}{n}, &  \ \ \Tilde{\theta}> \frac{1}{2} \\
		c(\alpha,\delta,\theta)\left(\frac{\sqrt{\log n}}{n}\right)^{\frac{4\tilde{\theta}}{2\tilde{\theta}+1}}, & \ \ \Tilde{\theta} \le \frac{1}{2}
	\end{array}
\right.,$$
where $c(\alpha,\delta,\theta)\gtrsim \max\left\{\sqrt{\frac{1}{2\tilde{\theta}}},1\right\}\delta^{-2}(\log \frac{1}{\alpha})^2$.

On the other hand when $C:=\sup_i\norm{\phi_i}_{\infty} < \infty$, we obtain
\begin{equation} \label{eq:perm-exp-2}
    \Delta_{n} \gtrsim 
\max\left\{ \left(\frac{\sqrt{2\tilde{\theta}}\delta n}{\log(1/\alpha)\sqrt{\log n}}\right)^{-1},\frac{1}{2\tilde{\theta}}\left(\frac{\delta n}{\log(1/\alpha)\sqrt{\log n}}\right)^{-2}\right\},
\end{equation}
which in turn is implied if 
$$\Delta_{n} = c(\alpha,\delta,\theta)\frac{\sqrt{\log n}}{n},$$
where $c(\alpha,\delta,\theta) \gtrsim \max\left\{\sqrt{\frac{1}{2\tilde{\theta}}},\frac{1}{2\tilde{\theta}},1\right\}\delta^{-2}(\log \frac{1}{\alpha})^2$. 
\subsection{Proof of Theorem \ref{thm: adp-gamma typeI}}
First note that the following two events,
$$A:=\bigcup_{\lambda \in \Lambda} \left\{\stat \geq \gamma(\Tilde{\alpha},\lambda)\right\},$$ and  
$$B := \sup_{\lambda \in \Lambda} \frac{\stat}{\Ntlh} \geq \frac{12\Cs}{b_1\sqrt{\Tilde{\alpha}}}\left( \frac{1}{n}+\frac{1}{m}\right)$$ are equivalent, where $\gamma(\alpha,\lambda) = \frac{12\Cs\Ntlh}{b_1\sqrt{\alpha}}\left( \frac{1}{n}+\frac{1}{m}\right)$. The proof therefore follows from Theorem \ref{thm:typI-Gamma} and \cite[Lemma A.16]{twosampletest}.

\subsection{Proof Theorem \ref{thm: adp-gamma TypeII}}

The same steps as in the proof of Theorem \ref{thm:typII-Gamma} will follow, with the only difference being $\alpha$ is replaced by $\frac{\alpha}{\cd}$, where $\cd = 1+\log_2\frac{\lambda_U}{\lambda_L}\lesssim \log(n).$

For the case of $\lambda_i \asymp i^{-\beta}$, we can deduce from the proof of Corollary~\ref{coro:poly-oracle} (see~\eqref{eq:oracle-poly-1}) that when $\lambda = d_3^{-1/2\tilde{\theta}} \Delta_{N,M}^{1/2\Tilde{\theta}}$ for some $d_3>0$, then 
\begin{align*}
    P_{H_1}\left\{\stat \geq \tilde{\gamma} \right\} \geq 1-4\delta,
\end{align*}
where $\tilde{\gamma}= \frac{12\Ntlh\sqrt{\cd}\Cs}{b_1\sqrt{\alpha}}\left( \frac{1}{n}+\frac{1}{m}\right)$
and the condition on the separation boundary becomes
$$\Delta_{n} \gtrsim \max\left\{\left(\frac{n}{\tilde{\alpha}^{-1/2}+\delta^{-1}}\right)^{-\frac{4\Tilde{\theta}\beta}{4\Tilde{\theta}\beta+1}}, (\delta n)^{-\frac{8\Tilde{\theta}\beta}{4\Tilde{\theta}\beta+2\beta+1}}\right\},$$
where $\tilde{\alpha}=\frac{\alpha}{\cd}$. In turn, this is implied if
$$\Delta_{n} =c(\alpha,\delta) \max\left\{\left(\frac{n}{\sqrt{\log n}}\right)^{-\frac{4\Tilde{\theta}\beta}{4\Tilde{\theta}\beta+1}}, n^{-\frac{8\Tilde{\theta}\beta}{4\Tilde{\theta}\beta+2\beta+1}}\right\},$$
where $c(\alpha,\delta)\gtrsim(\alpha^{-1/2}+\delta^{-2})$. Note that the optimal choice of $\lambda$ is given by $$\lambda=\lambda^*:=d_3^{-1/2\tilde{\theta}}c(\alpha,\delta,\theta)^{1/2\tilde{\theta}} \max\left\{\left(\frac{n}{\sqrt{\log n}}\right)^{-\frac{2\beta}{4\Tilde{\theta}\beta+1}}, n^{-\frac{4\beta}{4\Tilde{\theta}\beta+2\beta+1}}\right\}.$$
Thus it can be verified that for any $\theta$ and $\beta$, the optimal lambda can be bounded as $$r_1 n^\frac{-4\beta_U}{1+2\beta_U}\leq \lambda \leq r_2 \left(\frac{n}{\sqrt{\log n}} \right)^\frac{-2}{4\Tilde{\xi}+1}$$ for some constants $r_1$,$r_2>0$.
\par 
$\blacklozenge$ Define $v^* := \sup\{x \in \Lambda: x \leq \lambda^*\}$. From the definition of $\Lambda$, it is easy to see that $\lambda_L\leq \lambda^* \leq \lambda_U$ and $\frac{\lambda^*}{2}\leq v^* \leq \lambda^*$. Thus $v^*\in\Lambda$ is an optimal choice of $\lambda$ that will yield the same form of the separation boundary up to constants. Therefore, by \cite[Lemma A.16]{twosampletest}, for any $\theta$ and any $P$ in $\PP$, we have 
\begin{align*}
P_{H_1}\left\{\sup_{\lambda \in \Lambda} \frac{\stat}{\Ntlh} \geq \gamma \right\} \geq 1-4\delta.
\end{align*}
Thus the desired result holds by taking the infimum over $P \in \PP$ and $\theta$. $\blacklozenge$

When $\lambda_i \asymp i^{-\beta}$ and $C:=\sup_i\norm{\phi_i}_{\infty} < \infty$, then using \eqref{eq:oracle-poly-2}, the conditions on the separation boundary becomes

$$\Delta_n = c(\alpha,\delta)\left(\frac{n}{\sqrt{\log n}}\right)^{\frac{-4\tilde{\theta}\beta}{4\Tilde{\theta}\beta+1}},$$
where $c(\alpha,\delta)\gtrsim(\alpha^{-1/2}+\delta^{-2}).$ This yields the optimal $\lambda$ to be 
$$\lambda^*:=d_3^{-1/2\tilde{\theta}}c(\alpha,\delta,\theta)^{1/2\tilde{\theta}}\left(\frac{n}{\sqrt{\log n}}\right)^{\frac{-2\beta}{4\Tilde{\theta}\beta+1}}.$$ Then as in the previous case we deduce that for any $\theta$ and $\beta$, $$r_3\left(\frac{n}{\sqrt{\log n}} \right)^\frac{-2\beta_U}{4\theta_l\beta_U+1}\leq\lambda \leq r_4\left(\frac{n}{\sqrt{\log n}} \right)^\frac{-2}{4\xi+1}$$ for some constants $r_3,r_4>0.$ The claim therefore follows by using the argument mentioned between $\blacklozenge$ and $\blacklozenge$.

For the case $\lambda_i \asymp e^{-\tau i}$, $\tau>0$, the condition on the separation boundary from \eqref{eq:oracle-exp-1} becomes 
$$\Delta_{n} \gtrsim 
c(\alpha,\delta,\theta)\max\left\{ \left(\frac{n}{\log n}\right)^{-1},\left(\frac{ n}{\sqrt{\log n}}\right)^{-\frac{4\tilde{\theta}}{2\tilde{\theta}+1}} \right\},$$
where $c(\alpha,\delta,\theta)\gtrsim \max\left\{\sqrt{\frac{1}{2\tilde{\theta}}},1\right\}(\alpha^{-1/2}+\delta^{-2}).$ 
Thus $$\lambda^* = d_3^{-1/2\tilde{\theta}}c(\alpha,\delta,\theta)^{1/2\tilde{\theta}}\max\left\{ \left(\frac{n}{\log n}\right)^{-1/2\tilde{\theta}},\left(\frac{ n}{\sqrt{\log n}}\right)^{-\frac{2}{2\tilde{\theta}+1}} \right\},$$ which can be bounded as $r_5 \left(\frac{n}{\sqrt{\log n}}\right)^{-2} \leq \lambda \leq r_6 \left(\frac{n}{\log n}\right)^{-1/2\tilde{\xi}}$ for some $r_5,r_6>0.$
 Furthermore  when $C:=\sup_i\norm{\phi_i}_{\infty} < \infty$, the condition on the separation boundary becomes $$\Delta_{n} = c(\alpha,\delta,\theta)\frac{\log n}{n},$$
where $c(\alpha,\delta,\theta) \gtrsim \max\left\{\sqrt{\frac{1}{2\tilde{\theta}}},\frac{1}{2\tilde{\theta}},1\right\}(\alpha^{-1/2}+\delta^{-2})$. Thus 
$$\lambda^*=d_3^{-1/2\tilde{\theta}}c(\alpha,\delta,\theta)^{1/2\tilde{\theta}}\left(\frac{n}{\log n}\right)^{-1/2\tilde{\theta}},$$ which can be bounded by $r_7\left(\frac{n}{\log n}\right)^{-1/2\theta_l}\leq \lambda \leq r_8\left(\frac{n}{\log n}\right)^{-1/2\xi}.$
The claim, therefore, follows by using the same argument as mentioned in the polynomial decay case.

\subsection{Proof of Theorem~\ref{thm:perm adp typeI}}
The proof follows from Theorem \ref{thm: permutations typeI} and \cite[Lemma A.16]{twosampletest} by using $\frac{\alpha}{\cd}$ in the place of $\alpha$.

\subsection{Proof of Theorem \ref{thm:perm adp TypeII}}

The proof follows from Theorem \ref{thm: permutations typeII} using $\frac{\alpha}{\cd}$ instead of $\alpha$ in the expressions \eqref{eq:perm-poly-1}, \eqref{eq:perm-poly-2}, \eqref{eq:perm-exp-1} and \eqref{eq:perm-exp-2}, and then bounding the expressions for the resulting optimal $\lambda^*$ using the ideas similar to that used in the proof of Theorem \ref{thm: adp-gamma TypeII}.

\section*{Acknowledgments}
OH and BKS are partially supported by National Science Foundation (NSF) CAREER
award DMS-1945396. BL is supported by NSF grant DMS-2210775.

\bibliographystyle{plainnat} 
\bibliography{main} 

\setcounter{section}{0}
\renewcommand\thesection{\Alph{section}}
\newtheorem{thmm}{Theorem}[section]
\newtheorem{lem}{Lemma}[section]
\section{Technical results}
In the following, we present technical results that are used to prove the main results of the paper.  Unless
specified otherwise, the notation used in this section matches that of the main paper.
\begin{lem} \label{lemma:bound U-statistic3}
Let $(X_i)_{i=1}^n \stackrel{i.i.d}{\sim} Q , \ (Y_i)_{i=1}^m \stackrel{i.i.d}{\sim} P$ and $\B: \h \to \h$ be a bounded operator. Define
$$I=\frac{2}{nm}\sum_{i,j}\inner{ a(X_i)}{b(Y_j)}_{\h},$$ where $a(x)= \B\Sigma_{0,\lambda}^{-1/2}(\kk(\cdot,x)-\mu_Q)$, $b(x)= \B\Sigma_{0,\lambda}^{-1/2}(\kk(\cdot,x)-\mu_P)$, $\mu_Q = \int_{\X} \kk(\cdot,x)\,dQ(x)$ and $\mu_P = \int_{\Y} \kk(\cdot,y)\,dP(y)$. Then
\begin{enumerate}[label=(\roman*)]
\item $\E \inner{ a(X_i)}{b(Y_j)}_{\h}^2 \leq \norm{\B}_{\op}^4\norm{\SgL\Sigma_Q\SgL}_{\hs}\norm{\SgL\Sigma_P\SgL}_{\hs}$;
\item $\E\left(I^2\right) \leq  \frac{4}{nm}\norm{\B}_{\op}^4\norm{\SgL\Sigma_Q\SgL}_{\hs}\norm{\SgL\Sigma_P\SgL}_{\hs}.$
\end{enumerate}
\end{lem}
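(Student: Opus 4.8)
The plan is to exploit the two layers of independence built into $I$: the blocks $(X_i)_{i=1}^n$ and $(Y_j)_{j=1}^m$ are independent of each other, and within each block the summands are i.i.d.\ and centered, since $\E a(X_i)=\B\SgL(\mu_Q-\mu_Q)=0$ and $\E b(Y_j)=\B\SgL(\mu_P-\mu_P)=0$. The whole argument is a routine second-moment computation analogous to \cite[Lemmas A.4, A.5]{twosampletest}.

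\textbf{Part (i).} First I would rewrite the scalar square as a Hilbert--Schmidt inner product of rank-one operators, $\inner{a(X)}{b(Y)}_{\h}^2=\inner{a(X)\htens a(X)}{b(Y)\htens b(Y)}_{\hs}$. Using the independence of $X\sim Q$ and $Y\sim P$ together with Bochner integrability (guaranteed by $(A_0)$ and the boundedness of $K$), the expectation passes through the Hilbert--Schmidt inner product, and the identities $(Tu)\htens(Tu)=T(u\htens u)T^*$ and $\Sigma_Q=\E[(\kk(\cdot,X)-\mu_Q)\htens(\kk(\cdot,X)-\mu_Q)]$ (with $\SgL$ self-adjoint) give $\E[a(X)\htens a(X)]=\B\SgL\Sigma_Q\SgL\B^*$ and likewise $\E[b(Y)\htens b(Y)]=\B\SgL\Sigma_P\SgL\B^*$. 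Applying Cauchy--Schwarz in $\hs$ and then the submultiplicativity bound $\norm{\B A\B^*}_{\hs}\le\norm{\B}_{\op}^2\norm{A}_{\hs}$ to each factor yields $\E\inner{a(X)}{b(Y)}_{\h}^2\le\norm{\B}_{\op}^4\norm{\SgL\Sigma_Q\SgL}_{\hs}\norm{\SgL\Sigma_P\SgL}_{\hs}$.

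\textbf{Part (ii).} Next I would expand the square,
\[
\E(I^2)=\frac{4}{n^2m^2}\sum_{i,i'=1}^n\sum_{j,j'=1}^m\E\big[\inner{a(X_i)}{b(Y_j)}_{\h}\inner{a(X_{i'})}{b(Y_{j'})}_{\h}\big].
\]
For any term with $i\ne i'$, conditioning on $(Y_j)_{j=1}^m$ and using that $X_i$ and $X_{i'}$ are independent factors the conditional expectation into $\inner{\E a(X_i)}{b(Y_j)}_{\h}\inner{\E a(X_{i'})}{b(Y_{j'})}_{\h}=0$; symmetrically, conditioning on $(X_i)_{i=1}^n$ kills every term with $j\ne j'$. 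Hence only the $nm$ diagonal terms ($i=i'$ and $j=j'$) survive, and bounding each by Part (i) gives $\E(I^2)\le\frac{4}{n^2m^2}\cdot nm\cdot\norm{\B}_{\op}^4\norm{\SgL\Sigma_Q\SgL}_{\hs}\norm{\SgL\Sigma_P\SgL}_{\hs}$, which is the claimed bound.

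I do not anticipate a serious obstacle here; the only points needing a little care are justifying the interchange of expectation with the (bilinear, continuous) Hilbert--Schmidt inner product via Bochner integrability, and keeping the operator-norm constants straight through the $\B(\cdot)\B^*$ sandwich.
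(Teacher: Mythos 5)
Your proof is correct and follows essentially the same route as the paper: part (i) is the identical tensorization-plus-Cauchy--Schwarz computation, and for part (ii) the paper simply cites \cite[Lemma A.3(i)]{twosampletest} for the vanishing of the off-diagonal terms, which you instead verify directly via the centering $\E a(X)=\E b(Y)=0$ and independence. No gaps.
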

\begin{proof}
\emph{(i)} Note that
\begin{eqnarray*} 
    \E \inner{ a(X_i)}{b(Y_j)}_{\h}^2 
    &{} ={}& \E \inner{ a(X_i)\htens a(X_i)}{b(Y_j) \htens b(Y_j)}_{\hs} \\
    &{} ={}&  \inner{ \B\SgL\Sigma_Q\SgL\B^*}{\B\SgL\Sigma_P\SgL\B^*}_{\hs} \\
    &{} \leq{}& \norm{\B}_{\op}^4\norm{\SgL\Sigma_Q\SgL}_{\hs}\norm{\SgL\Sigma_P\SgL}_{\hs}.
\end{eqnarray*}
\\
\emph{(ii)} follows by noting that
\begin{align*}
    \E\left(I^2\right) \stackrel{(\dag)}{=} \frac{4}{n^2m^2}\sum_{i,j} \E\inner{ a(X_i)}{b(Y_j)}_{\h}^2
\end{align*}
where $(\dag)$ follows from \citep[Lemma A.3 \emph{(i)}]{twosampletest}, and the result follows from $(i)$.
\end{proof}

\begin{lem} \label{lemma: bounds for eta}
Let $u=\frac{dP}{d\PQ}-1  \in \Lp$ and $\eta= \norm{\gSL(\mu_0-\mu_P)}_{\h}^{2}$, where $g_{\lambda}$ satisfies $(A_1)$--$(A_4)$. Then
$$\eta \leq C_1 \norm{\U}_{\Lp}^{2}.$$ 
Furthermore, if $u \in \emph{\range}(\T^{\theta})$, $\theta>0$ and $$\norm{\U}_{\Lp}^2 \geq  \frac{4C_3}{3B_3} \norm{\T}_{\opl}^{2\max(\theta-\xi,0)}\lambda^{2 \Tilde{\theta}} \norm{\T^{-\theta}\U}_{\Lp}^2,$$  where $\Tilde{\theta}= \min(\theta,\xi)$, then, $$\eta \geq \frac{B_3}{4} \norm{\U}_{\Lp}^2.$$
\end{lem}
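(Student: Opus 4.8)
The plan is to expand $\eta$ in the eigenbasis of $\T$ and reduce both bounds to the scalar inequalities encoded in $(A_1)$--$(A_3)$. Recall from \eqref{Eq:spec-reg}--\eqref{eq:cov-rep} that $\eta = \eta_\lambda(P,P_0) = \inner{\T\gl(\T)u}{u}_{\Lp}$, where I use $\mu_P-\mu_0 = \id^* u$ (valid since $\E_{P_0}u=0$) together with the identity $\T\gl(\T) = \id\,g_\lambda(\Sigma_0)\,\id^*$. Writing $\T = \sum_{i}\lambda_i\,\tilde{\phi}_i\ltens\tilde{\phi}_i$ and noting that $\T$ annihilates the orthogonal complement of $\overline{\range(\T)}$, so that the $g_\lambda(0)$-part of the functional calculus drops out upon multiplication by $\T$, I get the spectral form
\[
\eta = \sum_{i}\lambda_i\gl(\lambda_i)\,\inner{u}{\tilde{\phi}_i}_{\Lp}^{2}.
\]
Throughout I will use that each $\lambda_i\in(0,\K]\subset\Gamma$, because $\norm{\T}_{\opl}\le\K$ by $(A_0)$, so that $(A_1)$ and $(A_3)$ are applicable at $x=\lambda_i$.

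The upper bound is then immediate: $(A_1)$ gives $\lambda_i\gl(\lambda_i)\le C_1$ for every $i$, hence $\eta\le C_1\sum_i\inner{u}{\tilde{\phi}_i}_{\Lp}^2\le C_1\norm{u}_{\Lp}^2$ by Bessel's inequality, with no hypothesis on $u$ beyond $u\in\Lp$.

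For the lower bound the first step is the scalar estimate that $(A_3)$ gives, for every $\varphi\in(0,\xi]$ and every $x\in\Gamma$ with $x>0$,
\[
x\gl(x)\ \ge\ B_3 - C_3\lambda^{2\varphi}x^{-2\varphi},
\]
which is trivial when $x\gl(x)\ge B_3$ and is precisely $(A_3)$ when $x\gl(x)<B_3$ (on that set $|B_3-x\gl(x)|=B_3-x\gl(x)$). Applying this with $\varphi=\tilde{\theta}:=\min(\theta,\xi)$ at $x=\lambda_i$ and summing against the nonnegative weights $\inner{u}{\tilde{\phi}_i}_{\Lp}^2$ gives
\[
\eta\ \ge\ B_3\sum_i\inner{u}{\tilde{\phi}_i}_{\Lp}^2 - C_3\lambda^{2\tilde{\theta}}\sum_i\lambda_i^{-2\tilde{\theta}}\inner{u}{\tilde{\phi}_i}_{\Lp}^2.
\]
Since $u\in\range(\T^\theta)\subset\overline{\range(\T)}$, the first sum equals $\norm{u}_{\Lp}^2$. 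For the second sum I split into two cases: if $\theta\le\xi$ then $\tilde{\theta}=\theta$ and it equals $\norm{\T^{-\theta}u}_{\Lp}^2$; if $\theta>\xi$ then $\tilde{\theta}=\xi$ and $\lambda_i^{-2\xi}=\lambda_i^{-2\theta}\lambda_i^{2(\theta-\xi)}\le\norm{\T}_{\opl}^{2(\theta-\xi)}\lambda_i^{-2\theta}$ because $0<\lambda_i\le\norm{\T}_{\opl}$; in both cases $\sum_i\lambda_i^{-2\tilde{\theta}}\inner{u}{\tilde{\phi}_i}_{\Lp}^2\le\norm{\T}_{\opl}^{2\max(\theta-\xi,0)}\norm{\T^{-\theta}u}_{\Lp}^2$. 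Hence $\eta\ge B_3\norm{u}_{\Lp}^2 - C_3\lambda^{2\tilde{\theta}}\norm{\T}_{\opl}^{2\max(\theta-\xi,0)}\norm{\T^{-\theta}u}_{\Lp}^2$, and the hypothesized lower bound on $\norm{u}_{\Lp}^2$ says exactly that the subtracted term is at most $\tfrac{3B_3}{4}\norm{u}_{\Lp}^2$, so $\eta\ge\tfrac{B_3}{4}\norm{u}_{\Lp}^2$.

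I do not expect a genuine obstacle here; the only points that need care are (i) the reduction to the spectral form, i.e.\ checking that the $g_\lambda(0)$-term in $\gl(\T)$ is killed upon multiplication by $\T$ and that the eigenvalues lie in $\Gamma$, and (ii) choosing $\varphi=\tilde{\theta}$ uniformly so that both regimes $\theta\le\xi$ (no $\norm{\T}_{\opl}$ factor) and $\theta>\xi$ (an extra $\norm{\T}_{\opl}^{2(\theta-\xi)}$) are treated at once, matching the $\norm{\T}_{\opl}^{2\max(\theta-\xi,0)}$ that appears in the statement.
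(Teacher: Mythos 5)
Your proof is correct and follows essentially the same route as the paper, which reduces $\eta$ to $\inner{\T\gl(\T)u}{u}_{\Lp}$ and defers to the spectral argument of \citet[Lemma A.7]{twosampletest}; you have simply written that spectral argument out in full (diagonalize $\T$, apply $(A_1)$ for the upper bound and the scalar consequence of $(A_3)$ with $\varphi=\tilde{\theta}$ for the lower bound). All the delicate points—the vanishing of the $g_\lambda(0)$ term, $\lambda_i\in\Gamma$, and the case split producing $\norm{\T}_{\opl}^{2\max(\theta-\xi,0)}$—are handled correctly.
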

\begin{proof}
    The proof uses the same approach as in the proof of \citep[Lemma A.7]{twosampletest} by noting that $\eta=\inner{\T\gT \U}{\U}_{\Lp}$ and involves  replacing $\mu_Q$, $R$ and $\Sigma_{PQ}$ with $\mu_0$, $P_0$ and $\Sigma_0$, respectively.
\end{proof}

\begin{lem} \label{lemma: bound hs and op}
Define $\Nol := \emph{Tr}(\SgL\Sigma_{0} \SgL)$,
$\Ntl := \norm{\SgL\Sigma_{0}\SgL}_{\hs}$, and $u := \frac{dP}{d\PQ}-1 \in L^2(P_0)$. Then the following hold:\vspace{-2mm}
\begin{enumerate}[label=(\roman*)]
    \item $\norm{\SgL\Sigma_P\SgL}_{\hs}^2 \leq 4 \Cl \norm{u}_{\Lp}^2  + 2\Ntlsq;$\vspace{-1mm}
    \item 
    $\norm{\SgL\Sigma_P\SgL}_{\op} \leq 2 \sqrt{\Cl} \norm{u}_{\Lp} + 1,$
    \end{enumerate}\vspace{-1mm}
where $$\Cl =
\left\{
	\begin{array}{ll}
		\Nol \sup_{i} \norm{\phi_i}^2_{\infty},   &  \ \ \sup_i \norm{\phi_i}^2_{\infty} < \infty \\
		\frac{2\Ntl}{\lambda} \sup_x \norm{K(\cdot,x)}_{\h}^2,  & \ \  \text{otherwise}
	\end{array}
\right..$$    
\end{lem}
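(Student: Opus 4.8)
The plan is to write $\Sigma_P$ in terms of $\Sigma_0$ via the change of measure $dP = (1+u)\,dP_0$, decompose it into a ``null'' piece and a ``correction'' piece, and bound each. Concretely, recall $\Sigma_P = \int_\X (K(\cdot,x)-\mu_P)\htens(K(\cdot,x)-\mu_P)\,dP(x)$; since centering at $\mu_P$ only decreases the operator in the Loewner order, it suffices to bound $\tilde\Sigma_P := \int_\X (K(\cdot,x)-\mu_0)\htens(K(\cdot,x)-\mu_0)\,dP(x)$, and then write
\[
\tilde\Sigma_P = \Sigma_0 + \int_\X (K(\cdot,x)-\mu_0)\htens(K(\cdot,x)-\mu_0)\,u(x)\,dP_0(x) =: \Sigma_0 + \Delta_u.
\]
Conjugating by $\SgL$ gives $\SgL\tilde\Sigma_P\SgL = \SgL\Sigma_0\SgL + \SgL\Delta_u\SgL$, so by the triangle inequality $\|\SgL\tilde\Sigma_P\SgL\|_{\hs}\le \Ntl + \|\SgL\Delta_u\SgL\|_{\hs}$ and $\|\SgL\tilde\Sigma_P\SgL\|_{\op}\le \|\SgL\Sigma_0\SgL\|_{\op}+\|\SgL\Delta_u\SgL\|_{\op}\le 1+\|\SgL\Delta_u\SgL\|_{\op}$, where we used $\|\SgL\Sigma_0\SgL\|_{\op}\le \|\SgL\Sigma_{0,\lambda}\SgL\|_{\op}=1$. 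Squaring the HS bound and using $(a+b)^2\le 2a^2+2b^2$ yields the stated shape $4\Cl\|u\|_{\Lp}^2 + 2\Ntlsq$ provided $\|\SgL\Delta_u\SgL\|_{\hs}^2 \le \Cl\|u\|_{\Lp}^2$, and similarly $\|\SgL\Delta_u\SgL\|_{\op}\le 2\sqrt{\Cl}\,\|u\|_{\Lp}$ up to absorbing constants.

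The heart of the argument is therefore to bound $\|\SgL\Delta_u\SgL\|_{\hs}$ (and its operator norm) by $\sqrt{\Cl}\,\|u\|_{\Lp}$. Using the identity $\SgL(K(\cdot,x)-\mu_0) = \SgL\id^*\tilde e_x$ where $\tilde e_x$ is the evaluation-type element in $\Lp$, one rewrites $\SgL\Delta_u\SgL$ in the $\Lp$ picture so that it becomes (a conjugate of) $\T_\lambda^{-1/2}\,M_u\,\T_\lambda^{-1/2}$-type object, or more directly one expands in the eigenbasis $(\tilde\phi_i,\phi_i)$ of $\T$. Writing $z_i(x) := \frac{\lambda_i}{\lambda_i+\lambda}\langle \id\phi_i,\cdot\rangle$ and using $\sum_i \frac{\lambda_i}{\lambda_i+\lambda} = \Nol$, a Cauchy–Schwarz / Bessel argument over the double sum reduces the HS norm to $\int |u|\cdot(\text{something})\,dP_0$, which is then split into the two cases. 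In the uniformly bounded case $\sup_i\|\phi_i\|_\infty<\infty$, one pulls the $L^\infty$ bound out and is left with $\sum_i \frac{\lambda_i}{\lambda_i+\lambda}=\Nol$ times $\sup_i\|\phi_i\|_\infty^2$ times $\|u\|_{\Lp}$, giving $\Cl = \Nol\sup_i\|\phi_i\|_\infty^2$. In the general case one instead uses the crude pointwise bound $\|K(\cdot,x)\|_\h^2\le\kappa$ together with the extra factor $\lambda^{-1}$ coming from one of the resolvent powers acting without a matching eigenvalue, producing $\Cl=\frac{2\Ntl}{\lambda}\sup_x\|K(\cdot,x)\|_\h^2$. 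This last bookkeeping — tracking exactly where a $\lambda^{-1}$ versus a $\Nol$ or $\Ntl$ appears, and matching it to the two definitions of $\Cl$ — is the step I expect to be the main obstacle, and it is precisely what the proof of \citep[Lemma A.7]{twosampletest} handles.

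As the excerpt already states, the cleanest route is not to redo this combinatorics from scratch but to invoke the analogous statement in \cite{twosampletest}: the claim follows from the proof of \citep[Lemmas A.6--A.7]{twosampletest} (or the corresponding covariance-comparison lemma there) verbatim, upon replacing the average measure $R$, its mean element $\mu_R$, and covariance operator $\Sigma_{PQ}$ by $P_0$, $\mu_0$, and $\Sigma_0$ respectively; the only structural fact used about $P_0$ is $dP = (1+u)\,dP_0$, which is exactly the relation playing the role of $dP = 2\,dR - dP_0$ there. Hence I would present the proof as: (1) reduce from $\Sigma_P$ to $\tilde\Sigma_P$ by Loewner monotonicity; (2) decompose $\SgL\tilde\Sigma_P\SgL = \SgL\Sigma_0\SgL + \SgL\Delta_u\SgL$; (3) bound the correction term by the two-case argument above, citing \cite{twosampletest} for the eigenexpansion computation; (4) combine by the triangle inequality and $(a+b)^2\le 2a^2+2b^2$.
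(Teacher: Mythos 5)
Your proposal is correct and matches the paper's treatment: the paper's proof of this lemma is a one-line reduction to \citep[Lemma A.9]{twosampletest} with $R$ replaced by $P_0$ and $\Sigma_{PQ}$ by $\Sigma_0$, which is exactly the deferral you make, and your reconstruction of that underlying argument (decompose via $dP=(1+u)\,dP_0$ into $\Sigma_0$ plus a correction, bound the correction by Cauchy--Schwarz in the two regimes giving the two forms of $\Cl$, then combine by the triangle inequality and $(a+b)^2\le 2a^2+2b^2$) is the right one. The only caveat is that your sketch is loose on constants in the correction-term bound, but since the paper itself delegates that bookkeeping to the cited lemma, this is not a substantive gap.
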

\begin{proof}
The proof is similar to that of \citep[Lemma A.9]{twosampletest} and involves replacing $R$ with $P_0$ and $\Sigma_{PQ}$ with $\Sigma_0$. 

\end{proof}
\begin{lem} \label{lemma:adaptation Upper}
For any $0<\delta<\frac{1}{2}$,
$$P_0^s\left\{(Y_i)_{i=1}^s: \norm{\SgL(\hat{\Sigma}_0-\Sigma_0)\SgL}_{\hs}\leq \frac{32\K\log\frac{3}{\delta}}{\lambda s}+\sqrt{\frac{16\K\Nol\log\frac{2}{\delta}}{\lambda s}}\right\} \geq 1-2\delta.$$
Furthermore, suppose  $C:=\sup_{i}\norm{\phi_i}_{\infty} < \infty$. Then 
\begin{align*}
&P_0^s\left\{(Y_i)_{i=1}^s: \norm{\SgL(\hat{\Sigma}_0-\Sigma_0)\SgL}_{\hs}\leq \frac{32 C^2 \Nol\log\frac{3}{\delta}}{s}+\sqrt{\frac{16C^2\Nolsq \log\frac{2}{\delta}}{s}}\right\}\\
&\qquad\qquad\qquad\qquad\geq 1-2\delta.\end{align*}
\end{lem}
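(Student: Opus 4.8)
The plan is to prove a concentration bound for the Hilbert-Schmidt norm of the operator $\SgL(\hat{\Sigma}_0-\Sigma_0)\SgL$ by viewing it as an average of i.i.d.\ Hilbert-Schmidt operators and applying a Bernstein-type inequality in Hilbert space. First I would rewrite $\hat{\Sigma}_0$, which is a $U$-statistic, in a convenient form. Since $\hat{\Sigma}_0$ is a one-sample $U$-statistic estimator of $\Sigma_0$ based on $(Y_i^0)_{i=1}^s$, I would either work with it directly or, to keep the argument cleaner, pass to the associated V-statistic / sample-mean estimator $\frac{1}{s}\sum_{i=1}^s (K(\cdot,Y_i^0)-\mu_0)\htens(K(\cdot,Y_i^0)-\mu_0)$ and control the (lower-order) difference, or invoke a Hoeffding-type decomposition of the $U$-statistic. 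Either way, after sandwiching by $\SgL$, the object of interest becomes $\frac{1}{s}\sum_i \xi_i$ (up to such lower order terms) where $\xi_i := \SgL\big[(K(\cdot,Y_i^0)-\mu_0)\htens(K(\cdot,Y_i^0)-\mu_0) - \Sigma_0\big]\SgL$ are i.i.d.\ mean-zero self-adjoint Hilbert-Schmidt operators.

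Next I would bound the two ingredients needed for Bernstein's inequality in $\hs$: an almost-sure bound $\|\xi_i\|_{\hs}\le L$ and a variance proxy $\E\|\xi_i\|_{\hs}^2 \le \sigma^2$ (or the operator-norm version of the variance). For the almost-sure bound, write $\|\SgL(K(\cdot,y)-\mu_0)\|_\h^2 = \langle \SgL\Sigma_{0,\lambda}\SgL \,\cdot\rangle$-type expression; using $\Sigma_0 \preceq \K \Id$ (from $(A_0)$, since $\sup_x K(x,x)\le\K$ implies $\|K(\cdot,x)-\mu_0\|_\h^2 \le \K$-ish, hence $\Sigma_0\preceq \K \Id$ modulo constants) one gets $\|\SgL(K(\cdot,y)-\mu_0)\|_\h^2 \le \K/\lambda$, so the rank-one term has $\hs$-norm at most $\K/\lambda$, and the centering subtracts $\SgL\Sigma_0\SgL$ whose $\hs$-norm is $\Ntl \le \sqrt{\Nol\cdot\K/\lambda}$, all of which is $\lesssim \K/\lambda$. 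For the variance, $\E\|\xi_i\|_{\hs}^2 \le \E\|\SgL(K(\cdot,Y^0)-\mu_0)\htens(\cdots)\SgL\|_{\hs}^2 = \E \|\SgL(K(\cdot,Y^0)-\mu_0)\|_\h^2 \cdot \|\SgL(K(\cdot,Y^0)-\mu_0)\|_\h^2 \le \frac{\K}{\lambda}\,\E\|\SgL(K(\cdot,Y^0)-\mu_0)\|_\h^2 = \frac{\K}{\lambda}\,\Nol$, using $\E\|\SgL(K(\cdot,Y^0)-\mu_0)\|_\h^2 = \Tr(\SgL\Sigma_0\SgL) = \Nol$. Plugging $L \asymp \K/\lambda$ and $\sigma^2 \asymp \K\Nol/\lambda$ into the Hilbert-space Bernstein inequality yields, with probability at least $1-2\delta$, a bound of the form $\frac{c_1\K\log(c/\delta)}{\lambda s} + \sqrt{\frac{c_2\K\Nol\log(c/\delta)}{\lambda s}}$, matching the claimed constants $32$ and $16$ after tracking them carefully (and the two $\delta$'s, $\frac3\delta$ and $\frac2\delta$, coming from a union over the deviation/expectation splits in the Bernstein statement, or from separately handling the $U$- vs.\ V-statistic correction).

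For the uniformly bounded case, $\sup_i\|\phi_i\|_\infty = C < \infty$, I would instead expand $K(\cdot,x)-\mu_0$ in the eigenbasis: writing $\SgL(K(\cdot,Y^0)-\mu_0) = \sum_i \frac{\sqrt{\lambda_i}}{\sqrt{\lambda_i+\lambda}}\tilde\phi_i(Y^0)\,(\text{normalized basis vector})$, one obtains $\|\SgL(K(\cdot,Y^0)-\mu_0)\|_\h^2 = \sum_i \frac{\lambda_i}{\lambda_i+\lambda}\tilde\phi_i(Y^0)^2 \le C^2 \sum_i \frac{\lambda_i}{\lambda_i+\lambda} = C^2\Nol$ almost surely. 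This replaces the crude $\K/\lambda$ bound by $C^2\Nol$ in both the almost-sure bound ($L\asymp C^2\Nol$) and the variance proxy ($\sigma^2 \asymp C^2\Nol\cdot\Nol = C^2\Nolsq$ after the same computation), and Bernstein then delivers $\frac{32C^2\Nol\log\frac3\delta}{s} + \sqrt{\frac{16C^2\Nolsq\log\frac2\delta}{s}}$.

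The main obstacle I anticipate is bookkeeping rather than conceptual: (a) handling the $U$-statistic structure of $\hat{\Sigma}_0$ cleanly — one must either cite a $U$-statistic Bernstein inequality in Hilbert space or reduce to the i.i.d.\ sum while showing the correction is absorbed, and the factor $\frac{s}{s-1}$ and the removal of the $i=j$ diagonal must be tracked; and (b) pinning the explicit constants ($32$, $16$, $\log\frac3\delta$, $\log\frac2\delta$) exactly as stated, which requires using the precise form of the Bernstein inequality invoked (likely the same one used implicitly elsewhere in the paper, e.g.\ the reference behind \cite[Lemma B.2]{kpca}) and being careful with the two separate failure events. I would structure the write-up as: (1) reduce to an i.i.d.\ sum of centered HS operators; (2) establish the $L$ and $\sigma^2$ bounds in the general case using $\Sigma_0\preceq\K\Id$; (3) apply Hilbert-space Bernstein and simplify; (4) repeat (2)–(3) in the uniformly bounded case with $C^2\Nol$ in place of $\K/\lambda$.
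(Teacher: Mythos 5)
Your proposal is correct and matches the paper's argument: the paper also reduces the claim to a sup-norm bound ($\sup_{x,y}\norm{Z(x,y)}_{\hs}\le 2\K/\lambda$, or $2C^2\Nol$ under uniform boundedness) and a variance bound ($\E\norm{\E_Y[Z(X,Y)]-\Sigma_0}^2_{\hs}\le 2\K\Nol/\lambda$, or $2C^2\Nolsq$) followed by a Bernstein-type inequality. The only choice you left open is resolved in the paper by keeping the two-variable symmetric kernel $Z(x,y)=\SgL\bigl(\tfrac{1}{\sqrt 2}(K(\cdot,x)-K(\cdot,y))\bigr)\htens\SgL\bigl(\tfrac{1}{\sqrt 2}(K(\cdot,x)-K(\cdot,y))\bigr)$ and invoking a $U$-statistic Bernstein inequality in Hilbert space directly (\citealp[Theorem D.3(ii)]{kpca}), which is where the two failure events and the $\log\tfrac{3}{\delta}$, $\log\tfrac{2}{\delta}$ constants come from, rather than passing through a Hoeffding decomposition.
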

\begin{proof}
Define $s(x):=K(\cdot,x)$, $A(x,y):=\frac{1}{\sqrt{2}}(s(x)-s(y))$, $U(x,y) := \SgL A(x,y)$, and $Z(x,y)= U(x,y) \htens U(x,y)$. Then
$$\SgL(\hat{\Sigma}_0-\Sigma_0)\SgL = \frac{1}{s(s-1)}\sum_{i\neq j}Z(Y_i,Y_j)-\E(Z(X,Y)). $$
Also,
$$\sup_{x,y}\norm{Z(x,y)}_{\hs} = \sup_{x,y} \norm{U(x,y)}_{\h}^2 = \frac{1}{2} \sup_{x,y}\norm{\SgL(s(x)-s(y))}_{\h}^2 \leq \frac{2\K}{\lambda}.$$
Define $\zeta(x) := \E_{Y}[Z(x,Y)]$. Then 
\begin{align*}
    &\E \norm{\zeta(X)-\Sigma_0}_{\hs}^2 \leq \E \norm{\zeta(X)}_{\hs}^2 = \E \norm{\SgL \E_{Y}[A(X,Y)\htens A(X,Y)]\SgL}_{\hs}^2 \\
    & = \E \text{Tr} \left(\SgL \E_{Y}[A(X,Y)\htens A(X,Y)] \SL^{-1} \E_{Y}[A(X,Y)\htens A(X,Y)] \SgL \right) \\
    &\le \sup_{x} \norm{\zeta(x)}_{\op}\text{Tr}(\SgL\Sigma_0\SgL) \\
    & \leq \sup_{x,y} \norm{U(x,y)}_{\h}^2\Nol  \leq \frac{2\K\Nol}{\lambda}. 
\end{align*}
When $C:=\sup_{i}\norm{\phi_i}_{\infty} < \infty$, we can use the same approach as in the proof of \cite[Lemma A.17]{twosampletest} to show that $\sup_{x,y} \norm{U(x,y)}_{\h}^2 \leq 2C^2 \Nol$ which in turn yields that 
$$\sup_{x,y}\norm{Z(x,y)}_{\hs}  \leq 2C^2 \Nol,$$
and 
$$\E \norm{\zeta(X)-\Sigma_0}_{\hs}^2 \leq 2C^2 \Nolsq.$$
Then the result follows from \citep[Theorem D.3\emph{(ii)}]{kpca}.
\end{proof}

\begin{lem} \label{lemma:adaptation lower}
Let $I =\inner{\SgL\Sigma_0\SgL}{\SgL(\hat{\Sigma}_0-\Sigma_0)\SgL}_{\hs}$. Then for any $\delta>0$,   
\begin{align*}
P_0^s\left\{(Y_i)_{i=1}^s:|I| \leq \frac{4\K\log\frac{2}{\delta}}{\lambda s}+\sqrt{\frac{12\K\Ntlsq\log\frac{2}{\delta}}{\lambda s}}\right\} \geq 1-\delta.
\end{align*}
Furthermore, suppose  $C:=\sup_{i}\norm{\phi_i}_{\infty} < \infty$. Then 
\begin{align*}
P_0^s\left\{(Y_i)_{i=1}^s:|I| \leq \frac{4C^2\Nol\log\frac{2}{\delta}}{ s}+\sqrt{\frac{12C^2\Nol\Ntlsq\log\frac{2}{\delta}}{ s}}\right\} \geq 1-\delta.
\end{align*}
\end{lem}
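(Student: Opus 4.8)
The plan is to observe that $I$ is a centered, scalar‑valued one‑sample $U$‑statistic of order two and to control it with the same $U$‑statistic Bernstein inequality (\citealt[Theorem D.3(ii)]{kpca}, applied with target space $\R$) that was used in the proof of Lemma~\ref{lemma:adaptation Upper}. Reusing the notation of that proof, set $s(x):=K(\cdot,x)$, $A(x,y):=\tfrac{1}{\sqrt2}(s(x)-s(y))$, $U(x,y):=\SgL A(x,y)$, $Z(x,y):=U(x,y)\htens U(x,y)$ and $\zeta(x):=\E_Y[Z(x,Y)]$, so that $\SgL(\sh-\Sigma_0)\SgL=\frac{1}{s(s-1)}\sum_{i\neq j}Z(Y_i,Y_j)-\E Z(X,Y)$ and $\E_X\zeta(X)=\SgL\Sigma_0\SgL=:G$. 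Since $\inner{G}{\cdot}_{\hs}$ is linear, $I=\frac{1}{s(s-1)}\sum_{i\neq j}W(Y_i,Y_j)-\E W(X,Y)$ with the symmetric kernel $W(x,y):=\inner{G}{Z(x,y)}_{\hs}=\inner{GU(x,y)}{U(x,y)}_{\h}\ge 0$, where $G\succeq 0$ is Hilbert--Schmidt with $\norm{G}_{\op}\le 1$, $\norm{G}_{\hs}=\Ntl$ and $\text{Tr}(G)=\Nol$, and $\E W(X,Y)=\inner{G}{G}_{\hs}=\Ntlsq$. It therefore suffices to supply an almost sure bound $M$ for $W$ and a variance proxy $\sigma^2$ for the first‑order Hoeffding projection $\E_Y W(X,Y)-\E W$.

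For $M$, positivity of $G$ and $\norm{G}_{\op}\le 1$ give $|W(x,y)|\le\norm{G}_{\op}\norm{U(x,y)}_{\h}^2\le\sup_{x,y}\norm{U(x,y)}_{\h}^2$, and the proof of Lemma~\ref{lemma:adaptation Upper} already shows $\sup_{x,y}\norm{U(x,y)}_{\h}^2\le 2\K/\lambda$ in general and $\le 2C^2\Nol$ when $\sup_i\norm{\phi_i}_\infty\le C$; hence $M=2\K/\lambda$ (resp.\ $M=2C^2\Nol$). For the variance, the crucial point is that $\E_Y W(x,Y)=\inner{G}{\zeta(x)}_{\hs}\ge 0$, both $G$ and $\zeta(x)$ being positive operators, so, using $\E_X\zeta(X)=G$,
$$\E\big[(\E_Y W(X,Y))^2\big]\le\Big(\sup_x\inner{G}{\zeta(x)}_{\hs}\Big)\,\E\big[\inner{G}{\zeta(X)}_{\hs}\big]=\Ntlsq\,\sup_x\inner{G}{\zeta(x)}_{\hs},$$
and, since $G,\zeta(x)\succeq 0$ with $\norm{G}_{\op}\le1$, one has $\inner{G}{\zeta(x)}_{\hs}=\text{Tr}(G\zeta(x))\le\norm{G}_{\op}\text{Tr}(\zeta(x))=\norm{G}_{\op}\,\E_Y\norm{U(x,Y)}_{\h}^2\le 2\K/\lambda$ (resp.\ $\le 2C^2\Nol$). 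Thus $\sigma^2\le 2\K\Ntlsq/\lambda$ (resp.\ $\le 2C^2\Nol\Ntlsq$); the very same quantities also bound the full second moment $\E[W(X,Y)^2]$, because $0\le W\le M$ forces $\E[W(X,Y)^2]\le M\,\E W(X,Y)=M\Ntlsq$, so the argument goes through whichever notion of variance the cited theorem employs.

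Feeding $(M,\sigma^2)=(2\K/\lambda,\,2\K\Ntlsq/\lambda)$ and $(2C^2\Nol,\,2C^2\Nol\Ntlsq)$ into \citet[Theorem D.3(ii)]{kpca} yields, with probability at least $1-\delta$, a bound of the shape $\frac{c_1 M\log(2/\delta)}{s}+\sqrt{\frac{c_2\sigma^2\log(2/\delta)}{s}}$, which reproduces the two displayed inequalities with the stated numerical constants. I expect the only non‑routine step to be the variance estimate: the naive bound $\inner{G}{\zeta(X)}_{\hs}^2\le\norm{G}_{\hs}^2\norm{\zeta(X)}_{\hs}^2$ loses a spurious factor $\Nol$, and the fix is to exploit positivity to replace one $\inner{G}{\zeta(X)}_{\hs}$ factor by its mean $\norm{G}_{\hs}^2=\Ntlsq$ and to estimate the remaining supremum through $\norm{G}_{\op}$ and $\text{Tr}(\zeta(x))$ rather than through a Hilbert--Schmidt norm; everything else is a direct transcription of the computations already carried out in Lemma~\ref{lemma:adaptation Upper}.
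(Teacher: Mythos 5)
Your proposal is correct and follows essentially the same route as the paper: the same $U$-statistic representation $I=\frac{1}{s(s-1)}\sum_{i\neq j}\inner{\SgL\Sigma_0\SgL}{Z(Y_i,Y_j)}_{\hs}-\E\inner{\SgL\Sigma_0\SgL}{Z(X,Y)}_{\hs}$, the same sup-norm bound via $\norm{\SgL\Sigma_0\SgL}_{\op}\le 1$ and $\sup_{x,y}\norm{U(x,y)}_\h^2$, and the same key use of positivity of the kernel (the paper exhibits it as $\norm{\Sigma_0^{1/2}\SgL U(x,y)}_\h^2\ge 0$, you as $\inner{GU}{U}_\h\ge 0$ with $G\succeq 0$) to get $\E W^2\le M\,\E W=M\Ntlsq$. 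The only cosmetic deviation is that the paper invokes Hoeffding's inequality for scalar $U$-statistics (de la Pe\~na, Theorem 4.1.8) rather than the vector-valued Bernstein bound of Lemma~\ref{lemma:adaptation Upper}; both yield the identical two-term tail bound.
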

\begin{proof}
Define $s(x):=K(\cdot,x)$, $A(x,y):=\frac{1}{\sqrt{2}}(s(x)-s(y))$, $U(x,y) := \SgL A(x,y)$, $Z(x,y):= U(x,y) \htens U(x,y)$, $B:=\E Z(X,Y)= \SgL\Sigma_0\SgL$, and $$\Tilde{Z}(x,y):= \inner{B}{Z(x,y)}_{\hs}.$$ Then
$$I = \frac{1}{s(s-1)}\sum_{i\neq j}\Tilde{Z}(Y_i,Y_j)-\E\Tilde{Z}(X,Y).$$
Moreover,
\begin{align*}
&\sup_{x,y}|\Tilde{Z}(x,y)|=\sup_{x,y}|\inner{B}{Z(x,y)}_{\hs}|\\
& \qquad = \sup_{x,y}|\text{Tr}(B (U(x,y) \htens U(x,y)))| \leq \norm{B}_{\op}\sup_{x,y}\norm{U(x,y)}_{\h}^2 \leq \frac{2\K}{\lambda},
\end{align*}
and
\begin{align*}
    \E\Tilde{Z}^{2}(X,Y) \stackrel{(*)}{\leq} \sup_{x,y}|\Tilde{Z}(x,y)|\E \inner{B}{Z(X,Y)}_{\hs} \leq \frac{2\K\Ntlsq}{\lambda}, 
\end{align*}
where $(*)$ follows by using $\Tilde{Z}(x,y) \geq 0$, which can be shown by writing,
\begin{align*}
    &\Tilde{Z}(x,y)=\inner{B}{Z(x,y)} = \text{Tr}(B(U(x,y)\htens U(x,y))) \\
    &=\text{Tr}\left(\SgL \Sigma_0 \SgL [U(x,y)\htens U(x,y)]\right)\\
    &=\text{Tr}(\Sigma_0^{1/2}\SgL (U(x,y)\htens U(x,y)) \SgL \Sigma_0^{1/2}) \\
    &= \norm{\Sigma_0^{1/2}\SgL U(x,y)}_{\h}^2 \geq 0.
\end{align*}
When $C:=\sup_{i}\norm{\phi_i}_{\infty} < \infty$, we can use the same approach as in \citep[Lemma A.17]{twosampletest} to show that $\sup_{x,y} \norm{U(x,y)}_{\h}^2 \leq 2C^2 \Nol$ which in turn yields that
$$\sup_{x,y}|\Tilde{Z}(x,y)| \leq 2C^2 \Nol,$$ and 
$$\E\Tilde{Z}^{2}(X,Y)  \leq 2C^2\Nol\Ntlsq.$$
Thus the result follows by using Hoeffding's inequality as stated in \citep[Theorem 4.1.8]{pena}.
\end{proof}

\begin{lem}\label{lemma: ntlh upper bound}
For any $c_1>0$, $\delta>0$ and $\frac{32c_1\K}{s}\log\frac{3}{\delta} \leq \lambda \leq \norm{\Sigma_0}_{\op}$, we have 
\begin{align*}
    P_0^s\left\{(Y_i)_{i=0}^s: \Ntlh \leq \norm{\M}_{\op}^2\left(\Ntl+\left(\frac{\sqrt{2}}{c_1}+\frac{1}{\sqrt{2c_1}}\right) \sqrt{\Nol}\right)  \right\} \geq 1-2\delta.
\end{align*}
Furthermore if $C:=\sup_i\norm{\phi_i}_{\infty} < \infty$, the above bound holds for $32c_1C^2\Nol\log\frac{3}{\delta} \leq s$ and $\lambda \leq \norm{\Sigma_0}_{\op}.$
\end{lem}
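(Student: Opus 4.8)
The plan is to reduce $\Ntlh = \norm{\SgLh\hat{\Sigma}_0\SgLh}_{\hs}$ to the perturbation bound for $\norm{\SgL(\hat{\Sigma}_0-\Sigma_0)\SgL}_{\hs}$ already established in Lemma~\ref{lemma:adaptation Upper}, and then to discharge a purely deterministic inequality using the assumed lower bound on $\lambda$ (respectively the upper bound on $s$). First I would record the operator identity $\SgLh = \SLh^{-1/2} = \SLh^{-1/2}\SL^{1/2}\SL^{-1/2} = \M\SgL$, which gives $\SgLh\hat{\Sigma}_0\SgLh = \M\,(\SgL\hat{\Sigma}_0\SgL)\,\M^*$ and hence, by submultiplicativity of the Hilbert--Schmidt norm against bounded operators together with $\norm{\M^*}_{\op}=\norm{\M}_{\op}$, the bound $\Ntlh \le \norm{\M}_{\op}^2\,\norm{\SgL\hat{\Sigma}_0\SgL}_{\hs}$. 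A triangle inequality then splits $\norm{\SgL\hat{\Sigma}_0\SgL}_{\hs} \le \Ntl + \norm{\SgL(\hat{\Sigma}_0-\Sigma_0)\SgL}_{\hs}$, so it remains only to bound the second term by $\bigl(\tfrac{\sqrt 2}{c_1}+\tfrac{1}{\sqrt{2c_1}}\bigr)\sqrt{\Nol}$ on an event of probability at least $1-2\delta$ (if $\delta\ge\tfrac12$ the claimed probability bound is vacuous, so one may assume $0<\delta<\tfrac12$).

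Next I would apply Lemma~\ref{lemma:adaptation Upper} verbatim: with probability at least $1-2\delta$, $\norm{\SgL(\hat{\Sigma}_0-\Sigma_0)\SgL}_{\hs} \le \tfrac{32\K\log(3/\delta)}{\lambda s} + \sqrt{\tfrac{16\K\Nol\log(2/\delta)}{\lambda s}}$. The hypothesis $\tfrac{32c_1\K}{s}\log\tfrac 3\delta \le \lambda$ is exactly $\tfrac{\K\log(3/\delta)}{\lambda s} \le \tfrac{1}{32c_1}$, which bounds the first term by $\tfrac1{c_1}$ and, since $\log\tfrac2\delta\le\log\tfrac3\delta$, the square-root term by $\sqrt{\Nol/(2c_1)} = \tfrac{1}{\sqrt{2c_1}}\sqrt{\Nol}$. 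To absorb the additive $\tfrac1{c_1}$ into the $\tfrac{\sqrt2}{c_1}\sqrt{\Nol}$ scale demanded by the statement, I would use $\lambda \le \norm{\Sigma_0}_{\op}$: writing $\Nol = \text{Tr}(\SgL\Sigma_0\SgL) = \sum_i \tau_i/(\tau_i+\lambda)$ with $\tau_1 = \norm{\Sigma_0}_{\op}$, the leading term alone is at least $\tfrac12$, so $\Nol\ge\tfrac12$ and therefore $\tfrac1{c_1}\le\tfrac{\sqrt2}{c_1}\sqrt{\Nol}$. Summing the two pieces gives the required bound and hence the first assertion.

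For the uniform-boundedness case I would repeat the argument with the second inequality of Lemma~\ref{lemma:adaptation Upper}, in which the roles of $\K/\lambda$ and $\Nol$ are played by $C^2\Nol$ and $\Nolsq = \Nol^2$: with probability at least $1-2\delta$, $\norm{\SgL(\hat{\Sigma}_0-\Sigma_0)\SgL}_{\hs} \le \tfrac{32C^2\Nol\log(3/\delta)}{s} + \sqrt{\tfrac{16C^2\Nolsq\log(2/\delta)}{s}}$. The hypothesis $32c_1C^2\Nol\log\tfrac3\delta \le s$ gives $\tfrac{C^2\Nol\log(3/\delta)}{s}\le\tfrac1{32c_1}$, so the first term is at most $\tfrac1{c_1}\le\tfrac{\sqrt2}{c_1}\sqrt{\Nol}$ (again using $\Nol\ge\tfrac12$ from $\lambda\le\norm{\Sigma_0}_{\op}$), and the square-root term is at most $\sqrt{16\Nol/(32c_1)} = \tfrac1{\sqrt{2c_1}}\sqrt{\Nol}$, exactly as in the previous case.

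I do not anticipate a genuine obstacle: the proof is essentially bookkeeping of constants against the two quantitative hypotheses on $\lambda$ and $s$. The only two points requiring a little care are (i) the clean factorization $\SgLh=\M\SgL$, which is what makes $\norm{\M}_{\op}^2$ (and nothing more) the multiplicative correction between $\Ntlh$ and the $\Sigma_0$-normalized quantities, and (ii) the elementary remark that $\lambda\le\norm{\Sigma_0}_{\op}$ forces $\Nol\ge\tfrac12$, which is precisely what converts the additive constant $\tfrac1{c_1}$ coming from the ``bias'' part of the concentration bound into a term of the $\sqrt{\Nol}$-order appearing in the conclusion.
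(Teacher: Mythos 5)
Your proof is correct and follows essentially the same route as the paper's: the factorization $\SgLh=\M\SgL$ giving $\Ntlh\le\norm{\M}_{\op}^2\norm{\SgL\hat{\Sigma}_0\SgL}_{\hs}$, the triangle inequality, Lemma~\ref{lemma:adaptation Upper}, and the observation that $\lambda\le\norm{\Sigma_0}_{\op}$ forces $\Nol\ge\tfrac12$ so the additive $\tfrac1{c_1}$ can be absorbed into $\tfrac{\sqrt2}{c_1}\sqrt{\Nol}$. The constant bookkeeping in both cases matches the paper's argument exactly.
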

\begin{proof}
Let $\M = \SLh^{-1/2}\SL^{1/2}$. Then
\begin{align*}
    \Ntlh &\leq \norm{\M}_{\op}^2 \norm{\SgL\hat{\Sigma}_0\SgL}_{\hs}\\
    & \leq \norm{\M}_{\op}^2 \left(\Ntl + \norm{\SgL(\hat{\Sigma}_0-\Sigma_0)\SgL}_{\hs} \right).
\end{align*}
From Lemma \ref{lemma:adaptation Upper} and the assumption that $\frac{32c_1\K}{s}\log\frac{3}{\delta} \leq \lambda$, we have with probability at least $1-2\delta$ that,
\begin{align*}
  \norm{\SgL(\hat{\Sigma}_0-\Sigma_0)\SgL}_{\hs}&\leq \frac{1}{c_1}+ \frac{1}{\sqrt{2c_1}}\sqrt{\Nol} \\ 
  & \stackrel{(*)}{\leq} \left(\frac{\sqrt{2}}{c_1}+ \frac{1}{\sqrt{2c_1}}\right)\sqrt{\Nol},
\end{align*}
where in $(*)$ we used $\sqrt{\Nol}= \sqrt{\sum_{i}\frac{\lambda_i}{\lambda_i+\lambda}} \geq \sqrt{\frac{\norm{\Sigma_0}_{\op}}{\norm{\Sigma_0}_{\op}+\lambda}} \stackrel{(\dag)}{\geq} \frac{1}{\sqrt{2}}$, and $(\dag)$ follows from $\lambda \leq \norm{\Sigma_0}_{\op}$. Similarly when $C:=\sup_i\norm{\phi_i}_{\infty} < \infty$, the same bound holds by Lemma \ref{lemma:adaptation Upper} for $32c_1C^2\Nol\log\frac{3}{\delta} \leq s.$
\end{proof}
\begin{lem} \label{lemma: ntlh lower bound}
For any $c_1>0$, $\delta>0$, and $\max\{\frac{140\K}{s}\log \frac{16\K s}{\delta},\frac{4c_1\K}{s}\log\frac{2}{\delta}\}\leq \lambda \leq \norm{\Sigma_0}_{\op}$, we have
$$P_0^s\left\{(Y_i)_{i=0}^s: \Ntlsqh \geq \left(\frac{4}{9}-\frac{16}{3\sqrt{3c_1}}-\frac{32}{9c_1}\right)\Ntlsq  \right\} \geq 1-3\delta.$$  
Furthermore if $C:=\sup_i\norm{\phi_i}_{\infty} < \infty$, the above bound holds for $\lambda \leq \norm{\Sigma_0}_{\op},$ and \\ $C^2\Nol\max\{4c_1\log\frac{2}{\delta},136\log \frac{8\Nol}{\delta}\} \leq s.$
\end{lem}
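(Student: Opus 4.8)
The plan is to relate $\Ntlsqh$ to $\Ntlsq$ through the comparison operator $\M:=\SLh^{-1/2}\SL^{1/2}$, exactly as in the proof of Lemma~\ref{lemma: ntlh upper bound} but in the reverse direction. Since $\SLh^{-1/2}$ and $\SL^{1/2}$ are self-adjoint, $\SgLh=\SLh^{-1/2}=\M\SgL=\SgL\M^{*}$, so that $\SgLh\hat{\Sigma}_0\SgLh=\M\big(\SgL\hat{\Sigma}_0\SgL\big)\M^{*}$. Writing $A=\M^{-1}(\M A\M^{*})(\M^{*})^{-1}$ and using $\norm{(\M^{*})^{-1}}_{\op}=\norm{\M^{-1}}_{\op}$ yields the operator inequality $\norm{\M A\M^{*}}_{\hs}\geq\norm{\M^{-1}}_{\op}^{-2}\norm{A}_{\hs}$, hence
\begin{equation*}
\Ntlsqh=\norm{\SgLh\hat{\Sigma}_0\SgLh}_{\hs}^{2}\geq\norm{\M^{-1}}_{\op}^{-4}\,\norm{\SgL\hat{\Sigma}_0\SgL}_{\hs}^{2}.
\end{equation*}
Note $\M$ is boundedly invertible since $\lambda>0$, so this step is legitimate.

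Next I would expand the Hilbert--Schmidt norm around $\Sigma_0$:
\begin{equation*}
\norm{\SgL\hat{\Sigma}_0\SgL}_{\hs}^{2}=\Ntlsq+2I+\norm{\SgL(\hat{\Sigma}_0-\Sigma_0)\SgL}_{\hs}^{2}\geq\Ntlsq-2|I|,
\end{equation*}
where $I=\inner{\SgL\Sigma_0\SgL}{\SgL(\hat{\Sigma}_0-\Sigma_0)\SgL}_{\hs}$ is precisely the quantity controlled in Lemma~\ref{lemma:adaptation lower}. Applying that lemma on an event of probability at least $1-\delta$ and using the hypothesis $\tfrac{4c_1\K}{s}\log\tfrac{2}{\delta}\leq\lambda$ to absorb $\tfrac{\K\log(2/\delta)}{\lambda s}\leq\tfrac{1}{4c_1}$ (respectively $\tfrac{4c_1C^{2}\Nol\log(2/\delta)}{s}\leq1$ in the uniformly bounded case) gives $|I|\leq\tfrac{1}{c_1}+\sqrt{3\Ntlsq/c_1}$. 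Since $\lambda\leq\norm{\Sigma_0}_{\op}$ forces the leading term of $\Ntl^{2}$ to exceed $\tfrac14$, we have $\Ntl\geq\tfrac12$, so $1\leq4\Ntlsq$ and $\sqrt{\Ntlsq}=\Ntl\leq2\Ntlsq$; substituting, $2|I|\leq\big(\tfrac{8}{c_1}+\tfrac{4\sqrt3}{\sqrt{c_1}}\big)\Ntlsq$.

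Finally I would control $\norm{\M^{-1}}_{\op}$. By \cite[Lemma B.2\emph{(ii)}]{kpca}, under $\tfrac{140\K}{s}\log\tfrac{16\K s}{\delta}\leq\lambda\leq\norm{\Sigma_0}_{\op}$, the event $\{\norm{\M^{-1}}_{\op}^{2}\leq\tfrac32\}$ holds with probability at least $1-2\delta$; in the uniformly bounded case, \cite[Lemma A.17]{twosampletest} gives the same conclusion under $136C^{2}\Nol\log\tfrac{8\Nol}{\delta}\leq s$. On the intersection of this event with the one above (probability at least $1-3\delta$), $\norm{\M^{-1}}_{\op}^{-4}\geq\tfrac49$, and combining the displays,
\begin{equation*}
\Ntlsqh\geq\frac49\big(\Ntlsq-2|I|\big)\geq\Big(\frac49-\frac49\cdot\frac{4\sqrt3}{\sqrt{c_1}}-\frac49\cdot\frac{8}{c_1}\Big)\Ntlsq=\Big(\frac49-\frac{16}{3\sqrt{3c_1}}-\frac{32}{9c_1}\Big)\Ntlsq,
\end{equation*}
where I used $\tfrac{4\sqrt3}{9}=\tfrac{4}{3\sqrt3}$; when the bracket is negative the claim is trivial since $\Ntlsqh\geq0$. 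In the uniformly bounded case the two sample-size requirements combine into $C^{2}\Nol\max\{4c_1\log\tfrac{2}{\delta},\,136\log\tfrac{8\Nol}{\delta}\}\leq s$, as stated.

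I expect the main obstacle to be bookkeeping rather than any single estimate: one must match the numerical constants coming out of Lemma~\ref{lemma:adaptation lower} and \cite[Lemma B.2\emph{(ii)}]{kpca} so that the final coefficient is exactly $\tfrac49-\tfrac{16}{3\sqrt{3c_1}}-\tfrac{32}{9c_1}$, and track the probabilities carefully so that the union bound yields $1-3\delta$ (this relies on \cite[Lemma B.2\emph{(ii)}]{kpca} delivering its conclusion already at level $1-2\delta$). A secondary point of care is ensuring the reversed-direction Hilbert--Schmidt inequality is invoked with the correct exponent $\norm{\M^{-1}}_{\op}^{-4}$ on the two-sided conjugation.
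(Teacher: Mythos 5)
Your proposal is correct and follows essentially the same route as the paper: the reverse conjugation bound $\Ntlsqh\geq\norm{\M^{-1}}_{\op}^{-4}\norm{\SgL\hat{\Sigma}_0\SgL}_{\hs}^{2}$, the expansion around $\Sigma_0$ dropping the nonnegative square term, Lemma~\ref{lemma:adaptation lower} combined with $\Ntl\geq\tfrac12$ to absorb $|I|$ into $\Ntlsq$, and the external concentration bound on $\norm{\M^{-1}}_{\op}^{4}\leq\tfrac94$ to close the union bound at $1-3\delta$, with the constants matching exactly. The only cosmetic deviation is that the paper invokes part \emph{(iii)} rather than \emph{(ii)} of the cited external lemma for the $\M^{-1}$ event.
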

\begin{proof}
Let $\M := \SLh^{-1/2}\SL^{1/2}$ and $I :=\inner{\SgL\Sigma_0\SgL}{\SgL(\hat{\Sigma}_0-\Sigma_0)\SgL}_{\hs}$. Then 
\begin{align*}
    \Ntlsqh &\geq \frac{1}{\norm{\M^{-1}}_{\op}^4}\norm{\SgL\hat{\Sigma}_0\SgL}_{\hs}^2 \\
    & = \frac{1}{\norm{\M^{-1}}_{\op}^4}\left(\norm{\SgL \Sigma_0\SgL}_{\hs}^2+2I+\norm{\SgL(\hat{\Sigma}_0-\Sigma_0)\SgL}_{\hs}^2\right) \\
    &  \geq \frac{1}{\norm{\M^{-1}}_{\op}^4}(\Ntlsq-2|I|).
\end{align*}
Then from Lemma \ref{lemma:adaptation lower} and the assumption $\frac{4\K c_1}{s}\log\frac{2}{\delta} \leq \lambda$ (similarly when $C:=\sup_i\norm{\phi_i}_{\infty} < \infty$ the same bound holds by Lemma \ref{lemma:adaptation lower} for $32c_1C^2\Nol\log\frac{3}{\delta} \leq s$), we have with probability at least $1-\delta$,
$$|I| \leq \frac{1}{c_1}+\sqrt{\frac{3}{c_1}}\Ntl \leq \left( \sqrt{\frac{3}{c_1}}+\frac{2}{c_1}\right)\Ntl\leq2\left( \sqrt{\frac{3}{c_1}}+\frac{2}{c_1}\right)\Ntlsq,$$ where in the last two inequalities we used $\Ntl \geq \frac{\norm{\Sigma_0}_{\op}}{\norm{\Sigma_0}_{\op}+\lambda} \geq \frac{1}{2}$ when $\lambda \leq \norm{\Sigma_0}_{\op}$.

Define
$$S_1:=\left\{(Y_i)_{i=1}^s:|I| \leq 2\left( \sqrt{\frac{3}{c_1}}+\frac{2}{c_1}\right)\Ntlsq \right\}$$
and
$$S_2 := \left\{(Y_i)_{i=1}^s:\norm{\M^{-1}}_{\op}^4 \leq \frac{9}{4} \right\}.$$ Then, 
\begin{align*}
    P_0^s\left\{(Y_i)_{i=0}^s: \Ntlsqh \geq \left(\frac{4}{9}-\frac{16}{3\sqrt{3c_1}}-\frac{32}{9c_1}\right)\Ntlsq  \right\} &\geq P(S_1 \cap S_2)\\
    & \geq 1-P(S_1^{'})-P(S_2^{'})\geq 1-3\delta,
\end{align*}
where we used \citep[Lemma B.2 \emph{(iii)}]{kpca} in the last inequality, with $S^{'}$ being the complement of set $S$. Similarly, when $C:=\sup_i\norm{\phi_i}_{\infty} < \infty$ the same bound holds using \citep[Lemma A.17 \emph{(iii)}]{twosampletest}.
\end{proof}

\begin{lem} \label{Lemma: bounding expectations}
Let $\zeta = \norm{\gSLh(\mu_P-\mu_0)}_{\h}^2$, $\M = \SLh^{-1/2}\SL^{1/2}$, and $m \geq n$ . Then
\begin{align*}
    &\E \left[(\stat-\zeta)^2 |(Y_i^0)_{i=1}^{s}\right] \\
    & \quad \leq \tilde{C} \norm{\M}_{\op}^4 \left\{\frac{\Cl \norm{\U}_{\Lp}^2+\Ntlsq}{n^2}+\frac{\sqrt{\Cl}\norm{\U}_{\Lp}^3+\norm{\U}_{\Lp}^2}{n}\right\},
\end{align*}
where $\Cl$ is defined in Lemma~\ref{lemma: bound hs and op} and $\tilde{C}$ is a constant that depends only on $C_1$ and $C_2$. Furthermore, if $P=P_0$, then
\begin{align*}
    &\E\left[\left(\stat\right)^{2}|(Y_i^0)_{i=1}^{s}\right] \leq 6 \Cs^2\norm{\M}_{\op}^4\Ntlsq\left(\frac{1}{m^2}+\frac{1}{n^2}\right).
\end{align*}
\end{lem}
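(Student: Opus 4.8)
The statement is Lemma~\ref{Lemma: bounding expectations}, which bounds the conditional second moment of $\stat - \zeta$ (and of $\stat$ under $H_0$), conditioned on the sample $(Y_i^0)_{i=1}^s$ used to build $\hat\Sigma_0$. The key observation is that, conditioned on $(Y_i^0)_{i=1}^s$, the operator $\hat\Sigma_0$ (hence $\gSLh$) is \emph{deterministic}, so $\stat$ is exactly a two-sample $U$-statistic in the samples $(X_i)_{i=1}^n \sim P$ and $(X_i^0)_{i=1}^m \sim P_0$. This means I can invoke the already-developed machinery for two-sample $U$-statistics (Lemmas~A.3--A.5 of \cite{twosampletest}, and Lemma~\ref{lemma:bound U-statistic3} in this appendix) with the bounded operator $\B$ taken to be $\gShh \SLh^{1/2}$, exactly as in the proof of Theorem~\ref{thm:typII-oracle} but with $\Sigma_0$ replaced by $\hat\Sigma_0$ everywhere.

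\textbf{Key steps.} First, conditioning on $(Y_i^0)_{i=1}^s$, decompose $\stat$ by expanding around the mean elements: write $\stat = \tilde I_1 + \tilde I_2 + \tilde I_3 + \zeta$, where $\tilde I_1$ is the degenerate two-sample $U$-statistic in the centered variables $\gShh(K(\cdot,X_i)-\mu_P)$ and $\gShh(K(\cdot,X_i^0)-\mu_0)$, $\tilde I_2, \tilde I_3$ are the linear (``cross'') terms pairing a centered variable against the mean difference $\gShh(\mu_P-\mu_0)$, mirroring exactly the $I_1, I_2$ split in the proof of Theorem~\ref{thm:typII-oracle} but now with three sets of terms because of the two samples. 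Second, apply Lemma~\ref{lemma:bound U-statistic3}(ii) and the analogues of \cite[Lemmas A.4, A.5]{twosampletest} (with $\Sigma_{PQ}\to\hat\Sigma_0$) to bound $\E[\tilde I_1^2 \mid \cdot]$, $\E[\tilde I_2^2\mid\cdot]$, $\E[\tilde I_3^2\mid\cdot]$: the degenerate term contributes order $\norm{\hat\B}_{\op}^4 \big(\norm{\SgLh\Sigma_P\SgLh}_{\hs}^2 + \norm{\SgLh\Sigma_0\SgLh}_{\hs}^2\big) / n^2$-type quantities, and the linear terms contribute order $\norm{\hat\B}_{\op}^4 \norm{\SgLh\Sigma_P\SgLh}_{\op}\norm{\SgLh(\mu_P-\mu_0)}_{\h}^2 / n$-type quantities. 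Third, convert the $\hat\Sigma_0$-based quantities to $\Sigma_0$-based ones: use $\norm{\hat\B}_{\op}^2 = \norm{\gShh\SLh^{1/2}}_{\op}^2 \le \Cs$ (the analogue of \cite[Lemma A.8(ii)]{twosampletest} with $\Sigma_0$), and for the Hilbert--Schmidt/operator norms insert $\M = \SLh^{-1/2}\SL^{1/2}$, i.e. $\norm{\SgLh\Sigma_P\SgLh}_{\hs} \le \norm{\M}_{\op}^2\norm{\SgL\Sigma_P\SgL}_{\hs}$ and similarly for $\Sigma_0$, which explains the $\norm{\M}_{\op}^4$ prefactor. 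Fourth, apply Lemma~\ref{lemma: bound hs and op} to bound $\norm{\SgL\Sigma_P\SgL}_{\hs}^2 \lesssim \Cl\norm{u}_{\Lp}^2 + \Ntlsq$ and $\norm{\SgL\Sigma_P\SgL}_{\op} \lesssim \sqrt{\Cl}\norm{u}_{\Lp} + 1$, and use $\norm{\SgL(\mu_P-\mu_0)}_\h^2 = \eta_\lambda \le C_1\norm{u}_{\Lp}^2$ from Lemma~\ref{lemma: bounds for eta}. Collecting terms and using $m \ge n$ to absorb $1/m^2$ and $1/(nm)$ into $1/n^2$ yields the first bound. For the second statement, set $P = P_0$, so $u = 0$, $\mu_P = \mu_0$, $\Sigma_P = \Sigma_0$, hence $\zeta = 0$ and only the degenerate term survives; the cross terms vanish, and the bound collapses to $6\Cs^2\norm{\M}_{\op}^4 \Ntlsq(1/m^2 + 1/n^2)$ — here one keeps both $1/n^2$ and $1/m^2$ since $\stat$ is symmetric in the two roles and both samples are ``null.''

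\textbf{Main obstacle.} The routine parts are the $U$-statistic variance bookkeeping, which is essentially a transcription of the corresponding lemmas in \cite{twosampletest} with $\Sigma_{PQ}$ replaced by $\hat\Sigma_0$ and with the understanding that everything is conditional on $(Y_i^0)_{i=1}^s$. The one genuinely delicate point is \emph{tracking the $\M$ factors consistently}: the ``hatted'' quantities $\SgLh$ appear inside the conditional expectation, but the target bound is phrased in terms of the unhatted $\Ntl$, $\Nol$ (via $\Cl$) and a clean $\norm{\M}_{\op}^4$ prefactor, so one must be careful to pull out exactly $\norm{\M}_{\op}^2$ per insertion of $\SgLh \leftrightarrow \SgL$ and not lose track of whether a bound needs $\M$ or $\M^{-1}$ (only $\M$, since we are upper-bounding and $\SgLh = \SgL\cdot(\SLh^{-1/2}\SL^{1/2})^* \cdot \text{stuff}$ gives $\norm{\cdot}_{\hs}$-bounds with $\norm{\M}_{\op}^2$). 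A secondary subtlety is ensuring the constant $\tilde C$ depends only on $C_1, C_2$ — this follows because every appearance of $\norm{\hat\B}_{\op}^2$ is bounded by $\Cs = (C_1 + C_2)$ and no other regularizer constants enter the variance computation.
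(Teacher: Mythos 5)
Your proposal is correct and follows essentially the same route as the paper's proof: the same decomposition of $\stat-\zeta$ into degenerate $U$-statistic pieces plus linear (mean-difference) pieces, the same invocation of Lemmas~\ref{lemma:bound U-statistic3}, \ref{lemma: bounds for eta}, \ref{lemma: bound hs and op} and the variance lemmas of \citet{twosampletest}, and the same vanishing of the linear and cross terms under $P=P_0$. The only cosmetic difference is bookkeeping of the $\norm{\M}_{\op}$ factors: the paper defines the mixed operator $\B=\gSLh\SL^{1/2}$ with $\norm{\B}_{\op}\le\Cs^{1/2}\norm{\M}_{\op}$ so the covariance factors are unhatted from the outset, whereas you work with fully hatted quantities and convert at the end via $\SgLh=\M\SgL$; both yield the identical $\Cs^2\norm{\M}_{\op}^4$ prefactor.
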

\begin{proof}
Define $a(x)= \B\SgL(\kk(\cdot,x)-\mu_P)$, and $b(x)=\B\SgL(\kk(\cdot,x)-\mu_0)$, where $\B= \gSLh \SL^{1/2}$. Then replacing $\Sigma_{PQ}$ by $\Sigma_0$ and $\mu_Q$ by $\mu_0$ in the proof of \cite[Lemma A.12]{twosampletest}, it can be shown that
\begin{multline*}
    \stat-\zeta = \underbrace{\frac{1}{n(n-1)} \sum_{i\neq j} \left \langle a(X_i),a(X_j)\right \rangle_{\h}}_{\footnotesize{\circled{1}}} + \underbrace{ \frac{1}{m(m-1)}\sum_{i \neq j} \inner{b(X_i^0)}{b(X_j^0)}_{\h}}_{\footnotesize{\circled{2}}}  \\   +\underbrace{\frac{2}{m}\sum_{i=1}^{m} \inner{b(X_i^0)}{\B\SgL(\mu_0-\mu_P)}_{\h}}_{\footnotesize{\circled{3}}} - \underbrace{\frac{2}{nm}\sum_{i,j} \left \langle a(X_i),b(X_j^0)\right \rangle_{\h}}_{\footnotesize{\circled{4}}} \\
    - \underbrace{\frac{2}{n}\sum_{i=1}^{n} \inner{a(X_i)}{\B\SgL(\mu_0-\mu_P)}_{\h}}_{\footnotesize{\circled{5}}},
\end{multline*}
and 
\begin{eqnarray*}
    \norm{\B}_{\op} &{} \leq{}& \Cs^{1/2} \norm{\M}_{\op}.
\end{eqnarray*}
Next, we bound each of these terms using Lemmas  \ref{lemma:bound U-statistic3}, \ref{lemma: bound hs and op} and \citep[Lemma A.4, Lemma A.5]{twosampletest}. It follows from Lemma~\ref{lemma: bound hs and op}\emph{(i)} and \citep[Lemma A.4\emph{(ii)}]{twosampletest} that
\begin{align*}
        \E\left(\footnotesize{\circled{1}}^2 |(Y_i^0)_{i=1}^{s}\right) & \leq \frac{4}{n^2}\|\B\|_{\op}^4\norm{\Sigma_{0,\lambda}^{-1/2}\Sigma_P\SgL}_{\hs}^2 \\
        &\leq \frac{4}{n^2}\Cs^2\norm{\M}_{\op}^4 \left(4\Cl \norm{\U}_{\Lp}^{2}+2 \Ntlsq\right),
\end{align*}
and 
\begin{align*}
        \E\left(\footnotesize{\circled{2}}^2 |(Y_i^0)_{i=1}^{s}\right)
        &\leq \frac{4}{m^2}\Cs^2\norm{\M}_{\op}^4\Ntlsq.
\end{align*}
Using Lemma~\ref{lemma: bound hs and op}\emph{(ii)} and \citep[Lemma A.5]{twosampletest}, we obtain
\begin{align*}
        \E\left(\footnotesize{\circled{3}}^2 |(Y_i^0)_{i=1}^{s}\right) & \leq \frac{4}{m}\|\SgL\Sigma_0\SgL\|_{\op}\|\B\|_{\op}^4\|\SgL(\mu_P-\mu_0)\|_{\h}^2 \\ & \leq \frac{4}{m}\Cs^2\norm{\M}_{\op}^4\|\SgL(\mu_P-\mu_0)\|_{\h}^2 \\
        & \stackrel{(*)}{\leq} \frac{4}{m}\Cs^2\norm{\M}_{\op}^4 \norm{\U}_{\Lp}^2,
\end{align*}
and 
\begin{align*}
     \E(\footnotesize{\circled{5}}^2 |(Y_i^0)_{i=1}^{s} ) &\leq \frac{4}{n} \|\SgL\Sigma_P\SgL\|_{\op}  \|\B\|_{\op}^4\|\SgL(\mu_0-\mu_P)\|_{\h}^2 \\
     &  \leq \frac{4}{n} (1+2\sqrt{\Cl}\norm{\U}_{\Lp})  \Cs^2 \norm{\M}_{\op}^4 \|\SgL(\mu_0-\mu_P)\|_{\h}^2  \\
     & \stackrel{(*)}{\leq} \frac{4}{n} (1+2\sqrt{\Cl}\norm{\U}_{\Lp})  \Cs^2 \norm{\M}_{\op}^4 \norm{\U}_{\Lp}^2,
\end{align*}
where $(*)$ follows from using $g_\lambda(x)=(x+\lambda)^{-1}$ with $C_1=1$ in Lemma \ref{lemma: bounds for eta}. For term \circled{\footnotesize{4}}, using Lemmas \ref{lemma:bound U-statistic3} and  \ref{lemma: bound hs and op}, we have
\begin{align*}
     \E(\circled{4}^2 |(Y_i^0)_{i=1}^{s} ) &\leq  \frac{4}{nm}\|\B\|_{\op}^4\|\SgL\Sigma_P\SgL\|_{\hs}\|\SgL\Sigma_0\SgL\|_{\hs} \\ 
     & \leq \frac{4}{nm}\Cs^2 \norm{\M}_{\op}^4 (2\sqrt{\Cl} \norm{\U}_{\Lp}+\sqrt{2}\Ntl) \Ntl \\
     & \leq \frac{4}{nm}\Cs^2 \norm{\M}_{\op}^4 (2\sqrt{\Cl}\Ntl \norm{\U}_{\Lp}+\sqrt{2}\Ntlsq).
\end{align*}
Combining these bounds with the fact that $\sqrt{ab} \leq \frac{a}{2}+\frac{b}{2}$, and that $(\sum_{i=1}^k a_k)^2 \leq k \sum_{i=1}^k a_k^2$ for any $a,b,a_k \in \R$, $k \in \N$ yields that 
\begin{align*}
    &\E\left[ (\stat-\zeta)^2 |(Y_i^0)_{i=1}^{s} \right] \\
    & \quad \lesssim \norm{\M}_{\op}^4 \left(\frac{\Cl \norm{\U}_{\Lp}^2+\Ntlsq}{n^2}+\frac{\sqrt{\Cl}\norm{\U}_{\Lp}^3+\norm{\U}_{\Lp}^2}{n}\right.\\
&\qquad\qquad\qquad\qquad\qquad\left.    + \frac{\Ntlsq}{m^2}+\frac{\norm{u}_{\Lp}^2}{m}\right) \\
    & \quad \lesssim \norm{\M}_{\op}^4 \left(\frac{\Cl \norm{\U}_{\Lp}^2+\Ntlsq}{n^2}+\frac{\sqrt{\Cl}\norm{\U}_{\Lp}^3+\norm{\U}_{\Lp}^2}{n}\right),
\end{align*}
where in the last inequality we used $m \geq n$. 

When $P=P_0$, and using the same lemmas as above, we have 
\begin{align*}
        \E\left(\footnotesize{\circled{1}}^2 |(Y_i^0)_{i=1}^{s}\right) &\leq \frac{4}{n^2}\Cs^2\norm{\M}_{\op}^4\Ntlsq,\\
        \E\left(\footnotesize{\circled{2}}^2 |(Y_i^0)_{i=1}^{s}\right) &\leq \frac{4}{m^2}\Cs^2\norm{\M}_{\op}^4\Ntlsq,\\
        \E\left(\footnotesize{\circled{4}}^2 |(Y_i^0)_{i=1}^{s} \right) &\leq \frac{4}{nm}\Cs^2\norm{\M}_{\op}^4 \Ntlsq,
\end{align*}
and $\footnotesize{\circled{3}} = \footnotesize{\circled{5}}= 0$. Therefore, 
\begin{align*}
    \E[(\stat)^2 | (Y_i^0)_{i=1}^{s}] &= \E \left[\left(\circled{\footnotesize{1}} + \circled{\footnotesize{2}} + \circled{\footnotesize{4}}\right)^2 | (Y_i^0)_{i=1}^{s}\right]  
     \stackrel{(*)}{=} \E\left(\circled{\footnotesize{1}}^2 + \circled{\footnotesize{2}}^2 + \circled{\footnotesize{4}}^2|(Y_i^0)_{i=1}^{s} \right) \\ & \stackrel{(\dag)}{\leq} \Cs^2\norm{\M}_{\op}^4\Ntlsq\left(\frac{6}{m^2}+\frac{6}{n^2}\right),
\end{align*}
where $(*)$ follows by noting that $\E\left(\circled{\footnotesize{1}}\cdot\circled{\footnotesize{2}}\right)=\E\left(\circled{\footnotesize{1}}\cdot\circled{\footnotesize{4}}\right)=\E\left(\circled{\footnotesize{2}}\cdot\circled{\footnotesize{4}}\right) = 0$ under the assumption $P=P_0$, and $(\dag)$ follows using  $\sqrt{ab} \leq \frac{a}{2}+\frac{b}{2}.$
\end{proof}

\begin{lem} \label{lemma:bound quantile}
For $0<\alpha\leq e^{-1}$, $\delta>0$ and $m \geq n$, there exists a constant $ C_5>0$ such that
\begin{align*}
    P_{H_1}(\qq \leq C_5\gamma ) \geq 1-\delta ,
\end{align*}
where $$\gamma = \frac{\norm{\M}_{\op}^2\log\frac{1}{\alpha}}{\sqrt{\delta}n}\left(\sqrt{\Cl}\norm{u}_{\Lp}+\Ntl+\Cl^{1/4}\norm{u}^{3/2}_{\Lp}+\norm{u}_{\Lp}\right)+\frac{\zeta\log\frac{1}{\alpha}}{\sqrt{\delta}n},$$ $\zeta = \norm{\gSLh(\mu_0-\mu_P)}_{\h}^2$, and $\Cl$ is defined in Lemma~\ref{lemma: bound hs and op}.
\end{lem}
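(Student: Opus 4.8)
The plan is to condition on the sample $(Y^0_i)_{i=1}^s$ used to build $\hat\Sigma_0$ --- so that the whitening operator $\gSLh$ is deterministic --- and then to reproduce the argument behind \cite[Theorem 9]{twosampletest} for bounding a permutation quantile, with $R$, $\Sigma_{PQ}$ and $\mu_Q$ there replaced by $P_0$, $\Sigma_0$ and $\mu_0$. Set $\M := \SLh^{-1/2}\SL^{1/2}$ and $\B := \gSLh\SL^{1/2}$, so that $\norm{\B}_{\op}\le\Cs^{1/2}\norm{\M}_{\op}$, exactly as in the proof of Lemma~\ref{Lemma: bounding expectations}. Pooling $(X_i)_{i=1}^n$ and $(X^0_j)_{j=1}^m$ into $(U_a)_{a=1}^{n+m}$, the permuted statistic $\hat\eta^\pi_\lambda$ is, conditionally on $(Y^0_i)_{i=1}^s$, a two-sample $U$-statistic of order $(2,2)$ in the $\h$-valued data $\{\gSLh K(\cdot,U_a)\}_{a=1}^{n+m}$ evaluated on a uniformly random $(n,m)$-split of the pooled index set. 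I would first note that the permutation mean vanishes: averaging each of the three pieces of $\hat\eta^\pi_\lambda$ over $\pi$ yields the common value $\frac{1}{(n+m)(n+m-1)}\sum_{a\ne b}\inner{\gSLh K(\cdot,U_a)}{\gSLh K(\cdot,U_b)}_{\h}$, and $1+1-2=0$, so $\E_\pi[\hat\eta^\pi_\lambda]=0$.

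The next and central step is to control the spread of the permutation law about $0$. Because the pooled sample is exchangeable, $\hat\eta^\pi_\lambda$ behaves like a \emph{degenerate} second-order permutation chaos of order $\tfrac1n$: its first-order (single-coordinate) part also vanishes in $\pi$-mean. Following \cite{twosampletest}, the conditional moments $\E_\pi|\hat\eta^\pi_\lambda|^k$ are then bounded by $\big(\tfrac{c}{n}\big)^k$ times powers of the Hilbert--Schmidt and operator norms of the whitened pooled empirical covariance operator --- which under $H_1$ is inflated by the signal through a rank-one term of size $\norm{\SgL(\mu_P-\mu_0)}_{\h}^{2}$ --- and of $\zeta$; a Markov step over $\pi$ with the optimal even $k$ then gives, conditionally on the data, $\qq \lesssim \tfrac{\log(1/\alpha)}{n}\,\widehat G$, where $\widehat G$ is the resulting combination of empirical pooled-covariance functionals and $\zeta$, and where $\alpha\le e^{-1}$ is used only to ensure $\log(1/\alpha)\ge1$.

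It remains to replace the empirical pooled functionals by population objects. Using $m\ge n$, the pooled covariance is dominated by the $P$-block, and a $U$-statistic concentration bound (Chebyshev, in the spirit of Lemma~\ref{lemma:adaptation lower}) gives, with probability at least $1-\delta$, $\norm{\SgL\Sigma^{\mathrm{pool}}\SgL}_{\hs}\lesssim\delta^{-1/2}\norm{\SgL\Sigma_P\SgL}_{\hs}$ and the analogous operator-norm bound --- this is the source of the $\delta^{-1/2}$ in $\gamma$. By Lemma~\ref{lemma: bound hs and op} (with $R\to P_0$, $\Sigma_{PQ}\to\Sigma_0$), $\norm{\SgL\Sigma_P\SgL}_{\hs}\lesssim\sqrt{\Cl}\norm{u}_{\Lp}+\Ntl$ and $\norm{\SgL\Sigma_P\SgL}_{\op}\lesssim\sqrt{\Cl}\norm{u}_{\Lp}+1$, while Lemma~\ref{lemma: bounds for eta} with $g_\lambda(x)=(x+\lambda)^{-1}$ (so $C_1=1$) gives $\norm{\SgL(\mu_P-\mu_0)}_{\h}^2\le\norm{u}_{\Lp}^2$, hence also $\zeta=\norm{\B\SgL(\mu_0-\mu_P)}_{\h}^2\le\norm{\B}_{\op}^2\norm{u}_{\Lp}^2$. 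Carrying the factor $\norm{\M}_{\op}^2$ through, and noting that the cross terms between the covariance fluctuation and the signal bump produce the $\Cl^{1/4}\norm{u}_{\Lp}^{3/2}$ and $\norm{u}_{\Lp}$ contributions, one obtains $\qq\lesssim C_5\gamma$ with probability at least $1-\delta$, as claimed.

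The main obstacle is the second step: establishing the conditional moment bound for the permuted two-sample $U$-statistic with enough sharpness to produce the linear $\log(1/\alpha)$ factor (rather than $\alpha^{-1/2}$) and with the precise joint dependence on the pooled-covariance functionals and on $\zeta$. The delicate bookkeeping is that $\hat\Sigma_0$ is built from the \emph{unpermuted} sample $(Y^0_i)$, so the whitening operator $\gSLh$ is frozen by the conditioning while the $n+m$ pooled points are permuted; the operator-norm factor $\norm{\M}_{\op}$ must be propagated throughout, and $\zeta$ (which genuinely depends on $\hat\Sigma_0$) is retained rather than replaced by a population surrogate --- which is exactly why it appears in the statement of $\gamma$. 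This is the computation carried out in \cite{twosampletest}, and the proof here amounts to transcribing it under the substitutions above together with the two lemmas just cited.
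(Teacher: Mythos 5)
Your proposal is correct and follows essentially the same route as the paper, whose own proof simply invokes \cite[Lemma A.15]{twosampletest} under the substitutions $\Sigma_{PQ}\to\Sigma_0$, $R\to P_0$, $\mu_Q\to\mu_0$ and then uses $m\ge n$ to collapse the $1/m$ terms. Your reconstruction of that cited argument (conditioning on $(Y^0_i)_{i=1}^s$, zero permutation mean, degenerate-chaos moment bounds yielding the $\log(1/\alpha)/n$ rate, and the Chebyshev step producing the $\delta^{-1/2}$ together with Lemmas~\ref{lemma: bound hs and op} and~\ref{lemma: bounds for eta}) matches what the paper relies on.
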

\begin{proof}
The proof is similar to that of \cite[Lemma A.15]{twosampletest} and involves replacing $\Sigma_{PQ}$ with $\Sigma_0$, $R$ with $P_0$, and $\mu_Q$ with $\mu_0$. Then the desired result follows by using $m \geq n$.
\end{proof}
\end{document}